 \newtheorem{thm}{Theorem}[section]
 \newtheorem{lem}[thm]{Lemma}
 \newtheorem{prop}[thm]{Proposition}
 \newtheorem*{defn}{Definition}
 \newtheorem{rem}[thm]{Remark}
 \numberwithin{equation}{section}
 \newcommand{\al}{\alpha}
 \newcommand{\De}{\Delta}
 \newcommand{\ep}{\epsilon}
 \newcommand{\Si}{\Sigma}
 \newcommand{\om}{\omega}
 \newcommand{\Om}{\Omega}
 \newcommand{\Ga}{\Gamma}
 \newcommand{\ymh}{Yang-Mills-Higgs }
 \newcommand{\F}{\mathcal{F}}
 \newcommand{\E}{\mathcal{E}}
 \renewcommand{\L}{\mathcal{L}}
 \newcommand{\A}{\mathscr{A}}
 \renewcommand{\S}{\mathscr{S}}
 \newcommand{\g}{\mathfrak{g}}
 \newcommand{\G}{\mathscr{G}}
 \newcommand{\D}{\mathbb{D}}
 \newcommand{\Real}{\mathbb{R}}
 \newcommand{\Integer}{\mathbb{Z}}
 \newcommand{\abs}[1]{\vert#1\vert}
 \newcommand{\norm}[1]{\Vert#1\Vert}
 \newcommand{\ti}{\tilde}
 \newcommand{\p}{\partial}
 \def\<{\langle} \def\>{\rangle}
 \newcommand{\dt}[1]{\frac {d{#1}} {dt}}
 \newcommand{\nn}{\nonumber}
 \newcommand{\na}{\nabla_A}
\begin{document}

\title{Heat flow of Yang-Mills-Higgs functionals {in dimension two}}

\author{Chong Song\footnote{School of Mathematical Science,
Xiamen University, Xiamen 361005, P. R. China},
\  Changyou Wang\footnote{Department of Mathematics, Purdue University, West Lafayette, IN 47907, USA}}

\maketitle

\begin{abstract}
We consider the heat flow of Yang-Mills-Higgs functional where the base manifold
is a Riemannian surface and the fiber is a compact symplectic manifold. We show that the corresponding Cauchy problem  admits a global weak solution for any $H^1$-initial data. Moreover, the solution is smooth except finitely many singularities.
We prove an energy identity at finite time singularities and give a description of the asymptotic behavior at time infinity.
\end{abstract}


\section{Introduction}

Suppose $(\Sigma, g)$ is a compact Riemann surface {without boundary}, $G$ is a compact Lie group {equipped} with a metric, $\g$ is the Lie algebra
of $G$ and $\g^\ast$ is the dual of $\g$, and $P$ is a principal $G$-bundle on $\Sigma$. Let $(M, \om)$ be a compact symplectic manifold which supports a Hamiltonian action of $G$ with moment map
$\mu : M \to \g^*$, and $\pi : \mathcal{F} = P\times_{G} M \to \Sigma$ be the associated {fiber} bundle with {fiber} $M$.
Then $G$ extends to an equivariant action on $\mathcal F$, and $\mu$ extends to a map on the bundle $\mu:\F\to P\times_{ad}\mathfrak{g}^\ast$.
Denote the space of smooth connections on $P$ by $\A$, the space of smooth sections on $\F$ by $\S$. Denote the $W^{k,p}$-Sobolev completions of the spaces $\A$ and $\S$ by $\A_{k,p}$ and $\S_{k,p}$ respectively.
Then for a pair $(A,\phi) \in \A_{1,2}\times \S_{1,2}$, the \emph{\ymh functional} is defined by
\begin{equation}\label{YMH1}
\E(A,\phi) := \norm{F_A}^2_{L^2(\Sigma)}+ \norm{D_A \phi}^2_{L^2(\Sigma)} + \norm{\mu(\phi) - c}^2_{L^2(\Sigma)},
\end{equation}
where $F_A$ is the curvature of $A$, {$D_A$ is the exterior derivative associated with connection $A$},
 and $c \in \mathfrak{g}^\ast$ is a fixed central element.

The \emph{Yang-Mills-Higgs (or YMH) functional} is a composition of the famous \emph{Yang-Mills functional}, the kinetic energy functional and the Higgs potential {energy functional}. Since the YMH functional appears naturally in
{the} classical gauge theory, it {has generated a lot of} interests among {both} physicists and mathematicians during the past decades. For example, the Ginzburg-Landau equation in the superconductivity theory coincides with the variational equation of YMH functional. The critical points of the YMH functional are known as \emph{\ymh fields} and the YMH functional is an appropriate Morse function to study the underlying spaces (\cite{AB, JT, P, So, T}). On the other hand, it is also well-known that the minimal YMH fields are the so-called \emph{symplectic vortices} and their moduli space can be used to define invariants on symplectic manifolds with Hamiltonian actions \cite{CGS,M}.

A natural method to study the existence of critical points is the heat flow method. If we denote the formal adjoint operator{s} of the exterior derivative $D_A$ and {the} covariant derivative $\nabla_A$ by $D_A^*$ and $\nabla_A^*$ respectively, then the
equation of heat flow of \ymh functional can be written as
\begin{equation}\label{e:heat}
\left\{
\begin{aligned}
\frac{\partial A}{\partial t} &= -D_A^*F_A - \phi^* D_A\phi, \\
\frac{\partial\phi}{\partial t} &= -D_A^*D_A\phi - (\mu(\phi) - c)\cdot \nabla\mu(\phi).
\end{aligned}
\right.
\end{equation}
Here $\nabla$ denotes the connection induced by the metric on $(M,\omega)$ and $\phi^* D_A\phi$
stands for {the} element in the dual space of $\Om^1(AdP)$ consisting of $AdP$-valued 1-forms,
with $AdP=P\times_{Ad}\g$, {which is defined as follows:}
\[ \int_\Si \langle\phi^* D_A\phi, B\rangle dv_g := \int_\Si \langle D_A\phi, B\phi \rangle dv_g,
\ \forall\ B\in \Om^1(AdP). \]

Here we would like to mention a few relevant references from the vast literatures concerning both the YMH flow and the closely related Yang-Mills flow. The heat flow of Yang-Mills functional was first suggested by Atiyah and Bott~\cite{AB} and has been studied by many people. For example, R{\aa}de~\cite{R} studied the Yang-Mills heat flow in dimensions 2 and 3, and Struwe~\cite{St} has studied the Yang-Mills flow in dimension 4 and showed both the existence and uniqueness of local smooth solutions, while Schlatter~\cite{Sc1, Sc2} has described the blow-up phenomenon and long time behavior of the Yang-Mills flow. Recently, Hong, Tian and Yin \cite{HTY} have applied the Yang-Mills $\alpha$-flow to construct a weak solution of the Yang-Mills flow in dimension 4. For the YMH flow of a vector bundle over 4 dimensional manifold, results similar to \cite{St, Sc1} were obtained by Fang and Hong \cite{FH}. Hong and Tian \cite{HT} have also studied the asymptotic behavior of both the Yang-Mills flow and the YMH flow in higher dimensions. It is worth mentioning that  the Yang-Mills flow and the YMH flow have also been employed in the study of the existence of Hermitian-Einstein metrics and the so-called Hitchin-Kobayashi correspondence(cf. \cite{D, UY, Si, H, LZ}). For more related results, we refer to the book by Feehan~\cite{F} and references therein.

The equation of heat flow of YMH functional~(\ref{e:heat}) has its independent interest from the analytic point of view. It is a coupled quasilinear degenerate parabolic system with critical nonlinearities: the first equation in~(\ref{e:heat}) is analogous {to} the  Yang-Mills flow, while the second equation {in~(\ref{e:heat})} can be viewed
as the gauged heat flow of harmonic maps. Thus  it is {natural to expect} that
the equation of YMH flow \eqref{e:heat} should reflect some  features that are common with respect to
both the {Yang-Mills flow} and the heat flow of harmonic maps. It is worthwhile to mention that
(i) since the dimension of the base manifold ($\Sigma, g$) is two,  the nonlinearity of first equation of YMH flow \eqref{e:heat} becomes subcritical, while the nonlinearity of second equation of YMH  flow \eqref{e:heat} is critical;
(ii) since the fiber is a compact manifold that may support nontrivial harmonic maps from $\mathbb S^2$, the second
equation of  YMH  flow \eqref{e:heat} may develop finite time singularity.
Recall that the critical dimension of {Yang-Mills
 flow} is {four} in which the {Yang-Mills} functional is conformally invariant.
R{\aa}de~\cite{R} {has} showed that the {Yang-Mills  flow} admits a global smooth solution in subcritical dimensions 2 and 3, while the existence of global smooth solutions to the Yang-Mills flow in dimension 4 is an outstanding open problem. Yu~\cite{Y} showed the local existence of the YMH flow in dimension 2 with smooth initial data and studied the bubbling analysis at the first singular time.
In this paper, we will show that a weaker version of R{\aa}de's result holds for the YMH  flow
\eqref{e:heat}, namely there exists a global weak solution of (\ref{e:heat}) that is smooth away from finitely many points.
Note that finite time singularities do occur for harmonic map heat flows in dimension two~\cite{CDY,CL} (see also \cite{YL} for other related work), which could be regarded as a special case of the YMH flow.

Before stating our results, we first give the definition of weak solutions.
\begin{defn} For $0<T\le +\infty$ and $(A_0,\phi_0)\in \A_{0,2}\times \S_{1,2}$,
a pair of sections $(A,\phi)$
is called a weak solution to the YMH flow equation~(\ref{e:heat}) under the initial condition
\begin{equation} \label{IVP}
(A,\phi)\big|_{t=0}=(A_0,\phi_0),
\end{equation}
on the interval $[0, T)$, if \begin{itemize}
\item[(i)] $(A, \phi)\in C^0\big([0, T), \A_{0,2}\big)\times \Big(C^0([0,T], \S_{0,2})\cap L^2([0,T), \S_{1,2})\Big)$,
and $F_A\in L^2([0,T), L^2)$.
\item [(ii)] the following holds:
\begin{equation}\label{e:weak-sol}
\left\{
\begin{aligned}
&\int_0^T\int_\Si\< A, \p_t B\> \,dv_gdt = \int_0^T\int_\Si\big(\< F_A, D_AB\> + \< D_A\phi, B\phi\>\big)\,dv_gdt;\\
&\int_0^T\int_\Si\< \phi, \p_t \psi\> \,dv_gdt = \int_0^T\int_\Si\big(\< D_A\phi, D_A\psi\> + \< (\mu(\phi)-c)\nabla\mu(\phi), \psi\>\big) \,dv_gdt,\\
\end{aligned}
\right.
\end{equation}
for any test functions $B\in H_0^1\big([0, T], \A_{0,2}\big)\cap L^2\big([0,T), \A_{1,2}\big)$,
and $\psi\in H^1_0\big([0,T), \S_{0,2})\big)\cap L^2\big([0,T], \S_{1,2}\big)$.
\item [(iii)] $(A, \phi)$ satisfies
(\ref{IVP}) in $L^2$-sense.
\end{itemize}
\end{defn}

Now we state our main theorem on the existence of global weak solutions to the YMH flow.
\begin{thm}\label{main}
Let $(A_0, \phi_0)\in \A_{1,2}\times \S_{1,2}$. There exist a global weak solution $(A, \phi)$ to the YMH  flow~(\ref{e:heat})
and (\ref{IVP}) such that
\begin{itemize}
\item[i)] the energy inequality $\E(A(t),\phi(t))\le\E(A_0,\phi_0)$ holds for all $0\le t<+\infty$, and
\[ A\in C^0([0, \infty), \A_{0,2});\ \phi\in C^0([0, \infty), \S_{0,2}); \ F_A\in L^\infty([0, \infty), L^2);
\ D_A\phi\in L^\infty([0,\infty), L^2).\]
\item[ii)] there exist a positive integer $L\le [\frac{\E(0)}{\alpha(M)}]$, and gauge transformations
$\{s_i\}_{i=1}^L\subset\G_{2,2}$, and $0=T_0<T_1<T_2<\cdots<T_L<+\infty$ such that for $1\le i\le L$,
$(s_i^*A,s_i^*\phi)\in C^\infty\big(\Sigma\times (T_{i-1}, T_i]\setminus \{(x_1^i,T_i),\cdots, (x_{j(i)}^i,T_i)\}\big)$ for some $j(i)\le
\frac{\E(0)}{\alpha(M)}$. Here $\alpha(M)>0$ is defined by
$$\alpha(M)=\inf\big\{\int_{\mathbb S^2}|\nabla h|^2\,dv_{g_0}: \ h\in C^\infty(\mathbb S^2, M)
\ \mbox{is a nontrivial harmonic map}\big\}.
$$
\item [iii)] for each $1\le i\le L$, there exist finitely many nontrivial harmonic maps $\omega_{ij}:\mathbb S^2\to M$,
$1\le j\le L_i$ with $L_i\le \frac{\E(0)}{\alpha(M)}$, such that
\begin{equation}\label{EIfinitetime}
\lim_{t\uparrow T_i}\E(A(t), \phi(t))=\E(A(T_i), \phi_i(T_i))+\sum_{j=1}^{L_i}\int_{\mathbb S^2}|\nabla\omega_{ij}|^2\,dv_{g_0}.
\end{equation}
\item[iv)] a) there exist $t_i\uparrow\infty$, a sequence of gauge transformations $\{s_i\}\subset\G_{2,2}$,
a set of finitely many points $S:= \{x_1, \cdots, x_{k_0}\}\subset \Sigma$, with $k_0\le \frac{\E(0)}{\alpha(M)}$,
 and  a Yang-Mills-Higgs field $(A_\infty,\phi_\infty)\in\A\times\S$ such that
 $s_i^*A(t_i)\rightarrow A_\infty$ in $H^1(\Sigma)$, $s_i^*\phi(t_i)\rightharpoonup \phi_\infty$ in $H^1(\Sigma)$,
 and $s_i^*\phi(t_i)\rightarrow \phi_\infty$
 in $H^1_{loc}(\Si\setminus S)$, as $i\rightarrow \infty$; b) there exist finitely many nontrivial harmonic maps $\omega_{p}: \mathbb S^2 \to M, 1\le p\le p_0\le \frac{\E(0)}{\alpha(M)}$ such that
\begin{equation}\label{EIinfinitytime}
  \lim_{i\to \infty} \E(A(t_i), \phi(t_i)) = \E(A_\infty, \phi_\infty) + \sum_{p=1}^{p_0}\E(\omega_{p});
\end{equation}
and  c) the images of $\{\omega_{p}\}_{p=1}^{p_0}$ and that of $\phi_\infty$ are connected.
\end{itemize}
\end{thm}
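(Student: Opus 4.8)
The plan is to follow the now-standard strategy for critical two-dimensional geometric flows, adapted to the gauged setting. First I would establish \emph{local existence and uniqueness} of a smooth solution for smooth initial data $(A_0,\phi_0)$: since the flow \eqref{e:heat} is only weakly parabolic (degenerate along the gauge orbit), I would pass to the DeTurck-type or Coulomb gauge to make the system strictly parabolic, apply standard quasilinear parabolic theory to get a short-time smooth solution, and then gauge back; this recovers (and slightly sharpens) Yu's result \cite{Y}. Alternatively, for $H^1$-initial data, I would regularize $(A_0,\phi_0)$ by smooth data, solve the smooth flow, and extract limits. The key quantitative input is the energy inequality $\frac{d}{dt}\E(A(t),\phi(t)) = -\|\partial_t A\|_{L^2}^2 - \|\partial_t\phi\|_{L^2}^2 \le 0$, together with the fact that in dimension two the curvature term $\|F_A\|_{L^2}^2$ in $\E$ is subcritical, so the only way regularity can fail is through concentration of the Higgs energy $\|D_A\phi\|_{L^2}^2$.

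Second, I would prove the crucial \emph{$\epsilon$-regularity lemma}: there is $\epsilon_0>0$ (comparable to $\alpha(M)$) such that if $\int_{B_r(x)} (|F_A|^2+|D_A\phi|^2)\,dv_g < \epsilon_0$ on a parabolic cylinder, then one has interior bounds on all derivatives of $(A,\phi)$ in a suitable gauge on a smaller cylinder. This is where the dimension-two structure is used decisively: after fixing the Coulomb gauge, the second equation becomes a perturbed harmonic-map heat flow with critical nonlinearity, to which the Rivi\`ere/Struwe-type arguments (or Sacks--Uhlenbeck--Schoen rescaling) apply, while the connection equation is handled by subcritical elliptic estimates. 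Combined with the finite total energy $\E_0=\E(A_0,\phi_0)$, this forces the singular set at any fixed time to be finite, with cardinality bounded by $\E_0/\epsilon_0$; this gives parts i) and ii), the continuation past each singular time $T_i$ being achieved by taking the $H^1$-weak limit of $(A(t),\phi(t))$ as $t\uparrow T_i$ (after a gauge transformation $s_i\in\G_{2,2}$, obtained by Uhlenbeck compactness) as new initial data, and the uniform $L^2$-bound on $\partial_t A,\partial_t\phi$ from the energy inequality bounds the number $L$ of such restarts by $\E_0/\alpha(M)$.

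Third, for the \emph{energy identities} \eqref{EIfinitetime} and \eqref{EIinfinitytime} I would perform the blow-up analysis at each singular point. Near $(x^i_\ell,T_i)$, rescale parabolically; by the $\epsilon$-regularity and the uniform energy bound, the rescaled connections converge (after gauge fixing) to a flat connection on $\mathbb R^2$ — the curvature energy, being subcritical, carries no concentration — while the rescaled Higgs fields converge, after removing the singularity via the removable-singularity theorem for harmonic maps, to a nontrivial harmonic map $\omega_{ij}:\mathbb S^2\to M$ with energy $\ge\alpha(M)$. The no-neck / no-energy-loss statement then follows from the standard three-annulus or Pohozaev argument showing the energy on the neck regions tends to zero; since the connection contributes nothing to the bubbles, the neck analysis reduces essentially to the harmonic-map case (Qing--Tian, Lin--Wang). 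For part iv) I would combine this with the fact that $\int_0^\infty(\|\partial_t A\|^2+\|\partial_t\phi\|^2)\,dt\le\E_0<\infty$, so there is a sequence $t_i\to\infty$ with $\|\partial_t A(t_i)\|+\|\partial_t\phi(t_i)\|\to 0$, i.e. $(A(t_i),\phi(t_i))$ is an approximate YMH field; Uhlenbeck compactness plus the bubbling analysis then yields the limiting YMH field $(A_\infty,\phi_\infty)$ and the bubbles $\omega_p$, with \eqref{EIinfinitytime}. The connectedness claim c) is the standard consequence that at a bubble point the neck, having no energy in the limit, shrinks to a point, forcing the bubble image to touch the image of the body map.

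The main obstacle I anticipate is the gauge-fixing throughout the blow-up analysis: unlike the pure harmonic-map case, one must simultaneously control the connection in a good (Coulomb/broken Hormander) gauge on each scale and on each neck annulus, and patch these local gauges together, while ensuring the gauge transformations extend to global ones in $\G_{2,2}$ (Uhlenbeck's lemma gives only local gauges, and in dimension two small-curvature Coulomb gauges exist on disks but the patching on degenerating annuli is delicate). A secondary technical point is establishing the energy inequality and the differential identity $\frac{d}{dt}\E\le -\|\partial_t(A,\phi)\|_{L^2}^2$ rigorously for the weak solution obtained by approximation, which requires lower-semicontinuity of $\E$ under $H^1$-weak convergence modulo gauge and careful bookkeeping of the lost energy at each $T_i$ — this is exactly what ties together the bound $L\le[\E_0/\alpha(M)]$ and the identities.
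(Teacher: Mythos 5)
Your proposal is correct and follows essentially the same route as the paper: DeTurck's trick for local smooth existence, a parabolic $\ep$-regularity lemma plus smooth approximation of the $H^1$ data, Uhlenbeck compactness to restart the flow at each singular time with the count $L\le[\E(0)/\alpha(M)]$ coming from the energy drop $\ge\alpha(M)$, parabolic rescaling showing the curvature carries no concentration so that the bubbling reduces to approximate harmonic maps with $L^2$-tension fields (Ding--Tian, Lin--Wang), and vanishing tension fields along a sequence $t_i\to\infty$ for the asymptotics. The gauge-patching difficulty you flag is resolved in the paper not on degenerating annuli but by a single global Coulomb gauge on the compact surface (an Uhlenbeck gluing argument giving an $H^2$-bound on $s^*A$ in terms of $\|F_A\|_{L^2}+\|D_A^*F_A\|_{L^2}$), after which the section is treated as an approximate harmonic map in local trivializations.
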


It is an interesting question whether the global weak solution given by Theorem \ref{main} is unique in a certain class
of weak solutions, which we plan to investigate in the future.

The paper is organized as follows. In section 2, we review some preliminary facts on connections and curvatures on
G-bundles. In section 3, we utilize the DeTurck's trick to obtain the local existence of unique smooth solutions to
the YMH  flow \eqref{e:heat}. In section 4, we establish a priori estimates of smooth solutions to \eqref{e:heat}
under the smallness condition. In section 5, through smooth approximations of $H^1$-initial data and applications of
the results from section 3 and section 4, we show the existence of local weak solutions to \eqref{e:heat} under $H^1$-initial data.
In section 6 and section 7, by extending the bubbling analysis for  YMH fields to approximate YMH fields with $L^2$-controlled tension fields, we obtain both the energy identity at finite singular time and asymptotic behavior at time infinity for the global
weak solutions to \eqref{e:heat}. In section 8, by utilizing Uhlenbeck's gauge fixing techniques, we are able to extend
the existence of local weak solutions to global weak solutions to \eqref{e:heat}.  The details of proof of Theorem \ref{main} are included in that of Theorem 5.1, Theorem 7.1, Theorem 7.2, and Theorem 8.3.

\section{Preliminaries}

First we recall some basic definitions of connection{\bf s} on bundle{\bf s}. Let $Ad P := P{\times_{Ad}}\ \g$ be the adjoint bundle of the principal $G$-bundle $P$. The space $\A$ of connections on $P$ is an affine space modelled on $\Om^1(Ad P)$,
here $\Om^k(Ad P)$ denotes the space of $Ad P$-valued $k$-forms for $k\ge 1$. Namely, if we fix a reference connection $D_{ref}$
on $P$, then
\[ \A = \Big\{D_A = D_{ref} + a\ |\ a\in \Om^1(Ad P)\Big\}. \]
Any connection $A\in \A$ gives rise to a covariant derivative $\nabla_A$ and an exterior differential operator $D_A$, which is the anti-symmetric part of $\nabla_A$, on the principal $G$-bundle $P$. The curvature of a connection
$A\in\A$ is defined by
\[ F_A := D_A\circ D_A \in \Om^2(Ad P). \]
The covariant derivative $\nabla_A$ can be extended to the associated bundle $\F$ as follows. Given a local trivialization of $\F$, let $\psi: \F|_U \to U\times M$ be the coordinate map. Then a section $\phi\in \S$ is locally equivalent to a map $u\in C^\infty(U, \F)$ and the connection $A$ can be written as
$\displaystyle D_A \big|_U = d + A$,
where $A := A_\al dx^\al$ is a $\g$-valued $1$-form. The covariant derivative $\nabla_A\phi$ is then defined by
$$\nabla_A\phi\big|_U := du + A\cdot u,$$
where the action of $A$ on the map $u$, $A\cdot u$, is induced by the symplectic action of $G$ on $M$.
More precisely, by the action of $G$ on $M$, every $\xi \in \g$ corresponds to a symplectic vector field $X_\xi$ by
\[ X_\xi(y) := \frac{d}{dt}\Big|_{t=0} exp(\xi t)\cdot y, ~\forall y \in M. \]
Then the action of the $\g$-valued $1$-form $A$ on $u$ is given by
\[ A\cdot u = A_\al\cdot u dx^\al = X_{A_\al}(u)dx^\al. \]
The operation of covariant derivative $\nabla_A$ on the (vertical) tangent bundle $T\F^v$ is slightly different. Denote the Levi-Civita connection induced by the metric $h$ on $M$ by $\nabla$, then we can define $\nabla_A:\Ga(T\F^v) \to \Ga(T\F^v\times T^*\Si)$ by
\[ \nabla_A V = \nabla V + A\cdot V:= \nabla V + \nabla_V X_{A_\al}\cdot dx^{\al}, \ V\in\Ga(T\F^v). \]
The covariant derivative $\nabla_A$ also extends to $T\F^v$-valued $p$-forms. Namely, we can define $\nabla_A: \Om^p(T\F^v) \to \Ga(T\F^v\otimes \Om^p(\Si)\otimes T^*\Si)$ by
\[ \nabla_A(\eta \otimes \om) = \nabla_A \eta \otimes \om + \eta \otimes \nabla \om,
\ \eta \otimes \om \in \Om^p(T\F^v), \]
where $\nabla$ is the Levi-Civita connection on $\Si$. The exterior derivative $D_A$ is defined
through the projection $\Om^p(T\F^v)\otimes T^*\Si \to \Om^p(T\F^v)$ in the standard way.

We will also need the following basic facts. For any connection $A$, we have the first Bianchi identity:
\begin{equation}\label{e:bianchi1}
  D_A F_A = 0,
\end{equation}\label{e:bianchi2}
and the second Bianchi identity:
\begin{equation}
  D_A^*D_A^* F_A = 0.
\end{equation}

There are two Laplace operators for the connection $A$ on fiber bundles. Namely, the Hodge Laplacian
$$\Delta_A = D_A^*D_A + D_AD_A^*,$$
and the rough Laplacian $\nabla_A^*\nabla_A$. The well-known Weitzenb\"ock formula describes the difference of these two Laplace operators on vector bundle valued forms. For example, the Weitzenb\"ock formula for $\Phi\in \Om^p(Ad P)$ is
\begin{equation}\label{e:weitzenbock}
  \nabla_A^*\nabla_A \Phi = \Delta_A \Phi + F_A\#\Phi + R_\Si\#\Phi,
\end{equation}
where $R_\Si$ is the Riemannian curvature of the base manifold $\Si$ and $\#$ denotes a multi-linear map with smooth coefficients.
Note that for a fiber bundle where the fiber $M$ is a Riemannian manifold with curvature tensor $R_M$, an extra term emerges in the Weitzenb\"ock formula. More precisely, for a section $\phi\in \S$, there is a pull-back bundle $\phi^*T\F$ on $\Si$. The curvature on $\phi^*T\F$ contains not only $F_A$ but also the pull-ball curvature $\phi^*R_M$. Therefore, for a section $\Psi \in \Om^p(\phi^*T\F)$, we have
\begin{equation}\label{e:weitzenbock1}
  \nabla_A^*\nabla_A \Psi = \Delta_A \Psi + F_A\#\Psi + R_\Si\#\Psi + R_M\#d\phi\#d\phi\#\Psi.
\end{equation}
This is the case when we apply this formula on $\Psi = D_A\phi$.

\medskip
\noindent\emph{Notations}: For simplicity, we will omit the subscription $A$ and simply use $D, F, \nabla, \Delta$ instead of $D_A, F_A, \nabla_A, \Delta_A$ if no confusions may occur.

\section{Local smooth solutions}

In this section, we will show both the existence and uniqueness of  local smooth solutions to the YMH  flow (\ref{e:heat}) for any smooth initial data $(A_0, \phi_0)$.
First, fix a smooth reference connection. For example, we may choose the initial connection $A_0$ as the reference connection. Then any connection $D$ corresponds to a 1-form $a\in \Om^1(Ad P)$ by
$$D = A_0 + a. $$
The curvature of $A$ is
\[ F_A = D_A\circ D_A = F_{A_0} + D_0a + a\wedge a = F_{A_0} + D_A a - a\wedge a, \]
since $D_A a = D_{A_0}a + [a, a]$. Then the equation (\ref{e:heat}) can be written as
\begin{equation}\label{e:heat01}
\left\{
\begin{aligned}
\frac{\partial a}{\partial t} &= -D_A^*D_Aa - D_A^*F_{A_0} + D_A^*(a\wedge a) - \phi^*D_A\phi, \\
\frac{\partial\phi}{\partial t} &= -D_A^*D_A\phi - (\mu(\phi) - c)\cdot \nabla\mu(\phi).
\end{aligned}
\right.
\end{equation}
It is well-known that that the first equation of $A$ in the system (\ref{e:heat01}) is degenerate,
since the Yang-Mills functional is invariant under gauge transformations.
To overcome this difficulty, we adapt DeTurck's trick and consider a
gauged system equivalent to \eqref{e:heat01} that is parabolic.
Our main result in this section is the following.

\begin{thm}\label{t:smooth}
  For any smooth initial data $(A_0, \phi_0)\in \A\times \S$, there exist a $T>0$ and a unique smooth solution
  $(A,\phi)$ to the Yang-Mills-Higgs  flow equation~(\ref{e:heat}) in $\Sigma\times [0, T)$,
  with $\displaystyle (A,\phi)\big|_{t=0}=(A_0,\phi_0)$.
\end{thm}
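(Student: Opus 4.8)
The plan is to use DeTurck's trick to break the gauge degeneracy in the first equation of \eqref{e:heat01}, solve the resulting strictly parabolic system by standard quasilinear parabolic theory, and then gauge back to obtain a solution of the original flow; uniqueness is then obtained separately. Concretely, fix $A_0$ as reference connection and introduce a time-dependent gauge transformation $S(t)$, or equivalently add the DeTurck term $-D_A(D_{A_0}^* a)$ (the linearization of the gauge-fixing functional) to the right-hand side of the $a$-equation. With $a = A - A_0$, the modified system reads
\begin{equation}\label{e:deturck}
\left\{
\begin{aligned}
\frac{\partial a}{\partial t} &= -\Delta_{A_0} a - D_A^*F_{A_0} + D_A^*(a\wedge a) + \text{l.o.t.}(a) - \phi^*D_A\phi, \\
\frac{\partial\phi}{\partial t} &= -D_A^*D_A\phi - (\mu(\phi) - c)\cdot \nabla\mu(\phi),
\end{aligned}
\right.
\end{equation}
where $-D_A^*D_A a - D_A(D_{A_0}^* a) = -\Delta_{A_0} a + (a\#D a + a\#a\#a)$ by the Weitzenböck-type identity \eqref{e:weitzenbock}, so the leading operator acting on $a$ becomes the (elliptic, $A_0$-fixed) Hodge Laplacian $\Delta_{A_0}$ plus lower-order terms. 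For $\phi$, working in a finite atlas of local trivializations of $\F$ and using a Nash-type embedding $M \hookrightarrow \Real^N$ (or simply local normal coordinates on $M$), the equation becomes $\partial_t u = \Delta u + \Gamma(u)(\nabla u, \nabla u) + A\#\nabla u + A\#A\#u + (\text{potential term})$, again strictly parabolic with smooth nonlinearity. Thus \eqref{e:deturck} is a quasilinear strictly parabolic system for the pair $(a,\phi)$ on the closed surface $\Sigma$.

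The second step is to produce a local smooth solution of \eqref{e:deturck}. I would set this up in Hölder spaces $C^{2+\alpha,1+\alpha/2}(\Sigma\times[0,T])$ (or in $W^{2,p}$ parabolic Sobolev spaces with $p$ large, then bootstrap), and run a contraction-mapping/fixed-point argument: freeze the coefficients at a given $(\bar a,\bar\phi)$, solve the resulting linear parabolic system with initial data $(0,\phi_0)$ via standard linear theory, and show the solution map is a contraction on a small ball of a short time interval $[0,T]$. The compactness of $\Sigma$ and the smoothness of all structural data (the metrics $g$ and $h$, the moment map $\mu$, the Hamiltonian vector fields $X_\xi$, the reference curvature $F_{A_0}$) guarantee the required bounds on the nonlinearities. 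Once a solution exists in $C^{2+\alpha,1+\alpha/2}$, parabolic Schauder estimates applied iteratively to the differentiated equations upgrade it to a smooth solution on $\Sigma\times[0,T)$, with $(a,\phi)|_{t=0}=(0,\phi_0)$.

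The third step is to undo the DeTurck gauge. Given the solution $(\tilde A,\tilde\phi)=(A_0+a,\phi)$ of \eqref{e:deturck}, solve the ODE (in $t$, pointwise on $\Sigma$) defining the gauge transformation $S(t)\in\G$ with $S(0)=\mathrm{id}$ that compensates for the added DeTurck term — explicitly $\partial_t S = -(D_{A_0}^* a)\cdot S$ or the analogous harmonic-gauge flow; since the right-hand side is smooth, $S(t)$ exists for short time and is smooth, and it stays in the gauge group because $G$ is compact. Then $(A,\phi):=(S^*\tilde A, S^*\tilde\phi)$ solves the original YMH flow \eqref{e:heat} with the correct initial data, possibly after shrinking $T$. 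For uniqueness: if $(A,\phi)$ and $(A',\phi')$ are two smooth solutions of \eqref{e:heat} with the same initial data, apply to each the (unique) DeTurck gauge fixing bringing it into the form \eqref{e:deturck} — the gauge transformation solving the relevant parabolic/ODE problem with identity initial condition is unique — so both map to solutions of the strictly parabolic system \eqref{e:deturck} with data $(0,\phi_0)$; uniqueness for that system (standard energy/Gronwall estimate, using the Lipschitz dependence of the nonlinearities) forces the gauged solutions to coincide, hence the originals coincide up to a gauge transformation that is forced to be the identity by the initial condition.

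The main obstacle is the gauge degeneracy itself, i.e. making the DeTurck reduction rigorous: one must verify that the added term exactly produces the elliptic operator $\Delta_{A_0}$ on $a$ (this is where the identity $D_A a = D_{A_0}a + [a,a]$ and the Weitzenböck formula \eqref{e:weitzenbock} enter, with all the commutator and curvature terms landing in genuinely lower order), that the compensating gauge transformation $S(t)$ both exists for a definite time and is smooth and valued in $\G$, and that pushing forward by $S$ converts a solution of \eqref{e:deturck} into a genuine solution of \eqref{e:heat} — equivalently that the two flows are related by a gauge transformation for as long as both exist. Everything else (local existence and parabolic regularity for the strictly parabolic system, the energy estimate for uniqueness) is by now routine; the coupling between the $a$-equation and the $\phi$-equation through the terms $\phi^*D_A\phi$ and $A\#\nabla u$, $A\#A\#u$ is harmless in two dimensions because these are of strictly lower order than the leading Laplacians and the fixed-point argument handles them automatically.
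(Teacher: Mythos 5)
Your overall strategy coincides with the paper's: DeTurck's trick to obtain a strictly parabolic system, standard quasilinear parabolic theory for local existence and smoothness, an ODE for the compensating gauge transformation, and a reversal of the gauge fixing for uniqueness. However, there is one genuine gap in the way you set up the modified system: you add the DeTurck term only to the connection equation and leave the $\phi$-equation untouched. This is not enough. The time-dependent gauge transformation $S(t)$ acts on the section as well, $\phi = S\circ\bar\phi$, so when you gauge back you get
\begin{equation*}
\frac{\partial\phi}{\partial t} \;=\; \dot S\circ\bar\phi + S\circ\frac{\partial\bar\phi}{\partial t}
\;=\; -\,S\circ(\overline D^*\bar a)\circ\bar\phi + S\circ\frac{\partial\bar\phi}{\partial t},
\end{equation*}
and the first term does not cancel unless the modified $\phi$-equation carries the compensating term $+(\overline D^*\bar a)\bar\phi$ on its right-hand side, exactly as in the paper's perturbed system \eqref{e:heat-DeTurck}. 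With your system as written, $(S^*\tilde A, S^*\tilde\phi)$ satisfies the first equation of \eqref{e:heat} but the second equation acquires a spurious zeroth-order term of the form $-(D^*a)\phi$; the pair is therefore not a solution of the YMH flow. The fix is one line --- add $(\overline D^*\bar a)\bar\phi$ to the $\phi$-equation, which is a lower-order perturbation and does not affect parabolicity --- but as stated the key step ``gauge back to recover \eqref{e:heat}'' fails. Relatedly, for the cancellation in the connection equation to be exact the gauge ODE must match the added term: the paper adds $-\overline D\,\overline D^*\bar a$ (with the evolving connection) and takes $\dot S = -S\circ\overline D^*\bar a$; if you insist on $D_{A_0}^*a$ in the DeTurck term, the ODE must use $D_{A_0}^*a$ as well, and the order of composition matters since $G$ is nonabelian.

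A second, smaller point concerns uniqueness. Given a solution $(A,\phi)$ of \eqref{e:heat}, the gauge transformation bringing it into DeTurck form satisfies $\dot S = -S\circ\overline D^*\bar a$ where $\bar a = S^{-1}\circ D\circ S - A_0$ depends on $S$ itself, so this is not an ODE with a prescribed right-hand side. The paper resolves this by rewriting it as the parabolic equation \eqref{e:gauge-DeTurck1}, namely $\dot S = -D^*DS - D^*\circ Sa$, whose unique local solvability is what makes the ``reverse DeTurck'' step rigorous and shows that the two candidate gauge transformations agree. You gesture at this (``the relevant parabolic/ODE problem''), but the well-posedness needed here is that of a parabolic PDE for $S$, not of a pointwise ODE, and this should be stated explicitly.
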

\begin{proof}
We consider the following perturbed system for $\overline {A}$ and $\bar{\phi}$:
\begin{equation}\label{e:heat-DeTurck}
\left\{
\begin{aligned}
\frac{\partial\bar{a}}{\partial t} &= -\overline{D}^*\overline{F} - \bar{\phi}^*\overline{D}\bar{\phi} -\overline{D}\overline{D}^*\bar{a}, \\
\frac{\partial\bar{\phi}}{\partial t} &= -\overline{\nabla}^*\overline{D}\bar{\phi} - (\mu(\bar{\phi}) - c)\cdot \nabla\mu(\bar{\phi}) + (\overline{D}^*\bar{a})\bar{\phi},
\end{aligned}
\right.
\end{equation}
under the initial condition $(\bar{a}(0), \bar{\phi}(0)) = (0, \phi_0)$, where
$$\overline{D} = D_{\overline{A}}, \ \bar{a} = \overline{D} - A_0,\ \overline{F} = F_{\overline{A}},
\ \overline{\nabla} = \nabla_{\overline{A}}.$$
If we denote the Hodge Laplacian on $\Om^p(Ad P)$ by $\overline{\De} := \overline{D}^*\overline{D} + \overline{D}\overline{D}^*$ and the Laplace-Beltrami operator on $\Ga(\F)$ by $\overline{\De}' :={\rm{tr}} \big(\overline{\nabla}^2\big)$, then the above system can be written as follows:
\begin{equation}\label{e:heat-DeTurck1}
\left\{
\begin{aligned}
\frac{\partial{\bar{a}}}{\partial t} + \overline{\De}\bar{a} &= -\overline{D}^*F_0 + \overline{D}^*(\bar{a}\wedge \bar{a}) - \bar{\phi}^*\overline{D}\bar{\phi}, \\
\frac{\partial{\bar{\phi}}}{\partial t} - \overline{\De}'\bar{\phi} &= - (\mu(\bar{\phi}) - c)\cdot \nabla\mu(\bar{\phi}) + (\overline{D}^*\bar{a})\bar{\phi},
\end{aligned}
\right.
\end{equation}
under the initial condition $(\bar{a}(0), \bar{\phi}(0)) =(0, \phi_0)$.
Note that the Hodge Laplacian $\overline\De$ and the Laplace-Beltrami operator $\overline\De'$ differs by a sign when applied to $\Om^0(\F)$. Then one can verify that the system (\ref{e:heat-DeTurck1}) is a quasilinear parabolic system. The standard parabolic theory implies that
for any smooth initial data $(A_0, \phi_0)$, there exists a unique smooth solution $(\bar{a}, \bar{\phi})\in C^\infty([0,T), \A\times \S)$ to (\ref{e:heat-DeTurck1})
for some $0<T=T(A_0,\phi_0)\le +\infty$ (see for example~\cite{FH} for a proof).

Next we choose a family of gauge transformations $\{S(t)\}_{0\le t<T}$,  which satisfies the following ordinary differential equation:
\begin{equation}\label{e:gauge-DeTurck}
\left\{
\begin{aligned}
\dt{S} &= -S\circ \overline{D}^*\bar{a}, \\
S(0) &= {\rm {id}}.
\end{aligned}
\right.
\end{equation}
We claim that the pair $(A(t), \phi(t)) = S(t)^*(\overline{A}(t), \bar{\phi}(t))$, $0\le t<T$, is a solution to the YMH
 flow equation
(\ref{e:heat}). Indeed, since
\[ D = S\circ \overline{D} \circ S^{-1}, ~ \dt{S^{-1}} = -S^{-1}\circ\dt{S}\circ S^{-1},\]
we have, by (\ref{e:gauge-DeTurck}),
\begin{equation*}
    \begin{split}
       \frac{\partial{a}}{\partial t} &= \dt{S}\circ \overline{D} \circ S^{-1} + S\circ \frac{\partial{\overline{D}}}{\partial t} \circ S^{-1}
        + S\circ \overline{D} \circ \dt{S^{-1}} \\
         &= S\circ (-\overline{D}^*\bar{a}\circ \overline{D} -\overline{D}^*\overline{F} - \bar{\phi}^*\overline{D}\bar{\phi}
         - \overline{D}\overline{D}^*\bar{a} + \overline{D}\circ \overline{D}^*\bar{a}) \circ S^{-1} \\
         &= S\circ (-\overline{D}^*\overline{F} - \bar{\phi}^*\overline{D}\bar{\phi}) \circ S^{-1} \\
         &= -D_A^*F_A - \phi^* D_A\phi,
     \end{split}
\end{equation*}
where we have used the identity:
\[ \overline{D}\circ \overline{D}^*\bar{a} = \overline{D}\overline{D}^*\bar{a} + \overline{D}^*\bar{a}\circ \overline{D}. \]
Also, since $\phi = S\circ\bar{\phi}$, we have
\begin{equation*}
    \begin{split}
       \frac{\partial{\phi}}{\partial t} &= \dt{S}\circ\bar{\phi} + S\circ \frac{\partial{\bar{\phi}}}{\partial t} \\
         &= -S\circ (\overline{D}^*\bar{a})\circ \bar{\phi} + S\circ\big(-\overline{\nabla}^*\overline{D}\bar{\phi} - (\mu(\bar{\phi}) - c)\cdot \nabla\mu(\bar{\phi})
         + (\overline{D}^*\bar{a})\bar{\phi}\big)  \\
         &= S\circ \big(-\overline{\nabla}^*\overline{D}\bar{\phi} - (\mu(\bar{\phi}) - c)\cdot \nabla\mu(\bar{\phi})\big) \\
         &= -\nabla_A^*D_A\phi - (\mu(\phi) - c)\cdot \nabla\mu(\phi).
     \end{split}
\end{equation*}
Moreover, the initial condition is preserved under the gauge transformation. Therefore we obtain a local smooth solution
$(A,\phi)\in C^\infty([0,T), \A\times \S)$ to the original YMH  flow equation (\ref{e:heat}) by solving the gauged parabolic system (\ref{e:heat-DeTurck1})
and the gauge transformations (\ref{e:gauge-DeTurck}).

The uniqueness part can be shown from the observation that we can reverse the above process and obtain a local smooth solution to (\ref{e:heat-DeTurck}) from a local smooth solution to the YMH flow equation (\ref{e:heat}) through the gauge transformations.  In fact, suppose $(A, \phi)\in
C^\infty([0, T), \A\times \S)$ is a solution to the YMH  flow equation (\ref{e:heat}), then we first solve the following equation
 to get a family of gauge transformations:
\begin{equation}\label{e:gauge-DeTurck1}
\left\{
\begin{aligned}
\dt{S} &= -D^*D S - D^*\circ Sa, \\
S(0) &= {\rm{id}}.
\end{aligned}
\right.
\end{equation}
Note that this is a parabolic system of $S$. Indeed, write $D = D_0 + a$, we have
\begin{equation*}
  \begin{aligned}
    D^*D S &= D_0^*D_0S + D_0^*(aS) + a^*(D_0S) + a^*aS\\
               &= D_0^*D_0S + (D_0^*a)S + 2a^*(D_0S) + a^*aS.
  \end{aligned}
\end{equation*}
Since $S$ can be viewed as a 0-form, we have $D_0^*S = 0$ and hence $\Delta_0 S = D_0^*D_0S$.
Thus the equation (\ref{e:gauge-DeTurck1}) is a parabolic system and has a unique local smooth
solution $S\in C^\infty([0,T))$.

A straightforward calculation shows that the pair $(\overline{A}(t), \bar{\phi}(t)) = (S^{-1})^*(A(t), \phi(t))$,
$0\le t<T$, is a solution to the system (\ref{e:heat-DeTurck}). In fact, the gauge transformation $S$ we get from (\ref{e:gauge-DeTurck1}) is exactly
 the same gauge transformation we obtain from (\ref{e:gauge-DeTurck}). To see this, note that since $\overline{D}^* = (S^{-1})^*(D^*) = S^{-1}\circ D^*\circ S$ and
$$\bar{a} = \overline{D} - A_0 = S^{-1}\circ D \circ S - D + a, $$
there holds
\[ S\circ \overline{D}^*\bar{a} = D^*\circ S\bar{a} = D^*\circ(D\circ S - S\circ D + Sa) = D^*D S + D^*\circ Sa. \]
Thus the equation (\ref{e:gauge-DeTurck}) and (\ref{e:gauge-DeTurck1}) are identical.

Now we can complete the proof of uniqueness. Suppose that we have two local smooth solutions $(A_1, \phi_1), (A_2, \phi_2)\in C^\infty([0, T),\A\times\S)$
of the YMH flow equation (\ref{e:heat}), then one solves the equation (\ref{e:gauge-DeTurck1}) to get two smooth gauge transformations $S_1(t), S_2(t)$,
$0\le t<T$,  and hence two local smooth solutions $(\bar{a}_1, \bar{\phi}_1), (\bar{a}_2, \bar{\phi}_2)$ to the system (\ref{e:heat-DeTurck}) with same initial value. By the uniqueness theorem of (\ref{e:heat-DeTurck}), we have $(\bar{a}_1, \bar{\phi}_1) = (\bar{a}_2, \bar{\phi}_2)$, $0\le t<T$. On the other hand,
since the parabolic system (\ref{e:gauge-DeTurck1}) is equivalent to the equation (\ref{e:gauge-DeTurck}), it follows from
the uniqueness of ordinary differential equations that $S_1 = S_2$. This in turn implies that $(A_1, \phi_1)=(A_2, \phi_2)$ for
$0\le t<T$. This completes the proof.
\end{proof}

\section{A priori estimates}

In this section, we will derive some a priori estimates of the local smooth solution obtained in the last section.

For $0<T\le +\infty$,
suppose $(A, \phi) \in C^\infty([0, T), \A)\times C^\infty([0, T), \S)$ is a smooth solution to the YMH  flow
equation (\ref{e:heat}), with the initial value $(A_0, \phi_0) \in \A\times \S$.
We first derive the evolution equation for the curvature $F_A$ of $A$ under the YMH flow
equation (\ref{e:heat}). By the definition of $F$ and a direct calculation, we have
\[  \frac{\partial{F_A}}{\partial t} = \frac{\partial}{\partial t}{}(D_A\circ D_A)
= \frac{\partial D_A}{\partial t}\circ D_A + D_A\circ \frac{\partial{D_A}}{\partial t}
= D_A(\frac{\partial{A}}{\partial t}). \]
Hence, under the YMH  flow equation (\ref{e:heat}), $F_A$ satisfies:
\begin{equation}\label{e:curvature}
\frac{\partial{F_A}}{\partial t} = - D_A D_A^* F_A - D_A(\phi^* D_A \phi).
\end{equation}
Note that it is not hard to see
\[ D_A(\phi^* D_A \phi) = D_A\phi\#D_A\phi + \phi^* F_A\phi. \]
The equation (\ref{e:curvature}) is a parabolic equation,  since by the first Bianchi identity (2.1), we have
\[ \Delta F_A = D_A D_A^* F_A. \]
Applying the Weitzenb\"ock formula (\ref{e:weitzenbock}), we can rewrite (\ref{e:curvature}) as
\begin{equation}\label{e:curvature1}
  \frac{\partial{F_A}}{\partial t} = -\nabla_A^*\nabla_A F_A - R_\Si\#F_A - F_A\#F_A - D_A\phi\#D_A\phi - \phi^*F_A\phi.
\end{equation}

Now we set  the energy density of $(A(t),\phi(t))$ by
\[ e(A(t), \phi(t)) := |F_A|^2+|D_A\phi(t)|^2 + |\mu(\phi(t))-c|^2, \]
the (total) Yang-Mills-Higgs energy of $(A(t),\phi(t))$ by
$$\E(t) := \E(A(t), \phi(t))=\int_\Sigma e(A(t), \phi(t))\,dv_g,$$
and the local Yang-Mills-Higgs energy of $(A(t),\phi(t))$ by
$$\E(t, B_R(x)) := \int_{B_R(x)}e(A(t),\phi(t))\,dv_g,$$
where $B_R(x):=\big\{y\in\Sigma: \ d(y,x)<R\big\}\subset\Sigma$ denotes the geodesic ball with center $x$ and  radius $R$,
and $d(\cdot, \cdot)$ denotes the distance function on ($\Sigma, g$).

Since the YMH  flow equation~(\ref{e:heat}) is the negative gradient flow of the YMH functional,
we have the following global
energy inequality.
\begin{lem}\label{l:energy-inequ}
{\rm{(energy inequality)}} For $0\le t<T$, it holds
\begin{equation}\label{e:energy-inequ}
  \E(t) + 2\int_0^t\int_\Si\Big(\abs{\frac{\partial{A}}{\partial t}}^2 + \abs{\frac{\partial{\phi}}{\partial t}}^2\Big)\, dv_gdt
  \le \E(0).
\end{equation}
\end{lem}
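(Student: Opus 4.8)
The plan is to prove the energy inequality \eqref{e:energy-inequ} by differentiating the total YMH energy $\E(t)$ along the flow and showing that $\frac{d}{dt}\E(t)$ equals minus twice the $L^2$-norm squared of the velocity field $(\p_t A, \p_t\phi)$. Since the flow \eqref{e:heat} is the (formal) negative $L^2$-gradient flow of $\E$, this is conceptually the first-variation computation, and the content is to justify it rigorously for the smooth solution on $[0,T)$ and then integrate in time.

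First I would compute the time derivative of each of the three terms in $\E(A(t),\phi(t)) = \norm{F_A}^2 + \norm{D_A\phi}^2 + \norm{\mu(\phi)-c}^2$. For the curvature term, using $\p_t F_A = D_A(\p_t A)$ (established just above in the excerpt) one gets $\frac{d}{dt}\norm{F_A}^2 = 2\int_\Si \< D_A(\p_t A), F_A\> = 2\int_\Si \< \p_t A, D_A^* F_A\>$ after integration by parts (valid since everything is smooth and $\Si$ is closed). For the kinetic term, one needs $\p_t(D_A\phi)$; differentiating $D_A\phi = d\phi + A\cdot\phi$ (or invariantly, commuting $\p_t$ with $D_A$ up to a curvature-type term) yields $\p_t(D_A\phi) = D_A(\p_t\phi) + (\p_t A)\cdot\phi$, so that $\frac{d}{dt}\norm{D_A\phi}^2 = 2\int_\Si \< D_A(\p_t\phi), D_A\phi\> + 2\int_\Si \< (\p_t A)\cdot\phi, D_A\phi\>$; integrating by parts in the first piece gives $2\int_\Si\< \p_t\phi, D_A^*D_A\phi\>$, and the second piece is precisely $2\int_\Si\< \p_t A, \phi^* D_A\phi\>$ by the defining relation of $\phi^* D_A\phi$ quoted in the introduction. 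For the potential term, $\frac{d}{dt}\norm{\mu(\phi)-c}^2 = 2\int_\Si \<\mu(\phi)-c, \nabla\mu(\phi)\cdot\p_t\phi\> = 2\int_\Si\<(\mu(\phi)-c)\cdot\nabla\mu(\phi), \p_t\phi\>$.

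Next I would collect the three contributions. The terms pairing against $\p_t A$ sum to $2\int_\Si\< \p_t A,\ D_A^* F_A + \phi^* D_A\phi\> = -2\int_\Si\abs{\p_t A}^2$ by the first equation of \eqref{e:heat}, and the terms pairing against $\p_t\phi$ sum to $2\int_\Si\< \p_t\phi,\ D_A^*D_A\phi + (\mu(\phi)-c)\cdot\nabla\mu(\phi)\> = -2\int_\Si\abs{\p_t\phi}^2$ by the second equation of \eqref{e:heat}. Hence $\frac{d}{dt}\E(t) = -2\int_\Si\big(\abs{\p_t A}^2 + \abs{\p_t\phi}^2\big)\,dv_g$ for all $t\in[0,T)$. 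Integrating this identity from $0$ to $t$ gives exactly \eqref{e:energy-inequ} with equality; since the statement only asks for the inequality, this suffices a fortiori.

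The computation is entirely routine given smoothness; the only place requiring a little care — and what I would regard as the main point to get right — is the commutation formula $\p_t(D_A\phi) = D_A(\p_t\phi) + (\p_t A)\cdot\phi$ on the associated fiber bundle $\F$, together with the sign and the identification of the cross term $\int_\Si\<(\p_t A)\cdot\phi, D_A\phi\>$ with $\int_\Si\<\p_t A, \phi^* D_A\phi\>$ via the duality pairing defined in the introduction; this is what makes the $\p_t A$-terms assemble into the full right-hand side $D_A^*F_A + \phi^* D_A\phi$ of the first flow equation. No integrability obstructions arise because we work with the smooth solution on a closed surface, so all integrations by parts are justified and $t\mapsto\E(t)$ is $C^1$.
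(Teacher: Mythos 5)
Your proposal is correct and is essentially the paper's own proof: the authors likewise compute $\frac{d}{dt}\E(t)$ by the first-variation identity, pair the resulting terms against $\p_t A$ and $\p_t\phi$ via the flow equations to get $\frac{d}{dt}\E(t)=-2\int_\Si\big(|\p_t A|^2+|\p_t\phi|^2\big)\,dv_g$, and integrate in time. You simply spell out the intermediate steps (the commutation $\p_t(D_A\phi)=D_A(\p_t\phi)+(\p_t A)\cdot\phi$ and the duality identification of the cross term with $\phi^*D_A\phi$) that the paper leaves implicit.
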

\begin{proof}
From the equation (\ref{e:heat}), we have
\begin{equation}\nn
\begin{aligned}
    \dt{\E} &= 2\int_\Sigma\Big( \< D_A^*F_A + \phi^* D_A\phi, \frac{\partial{A}}{\partial t}\> + \<\nabla_A^*\nabla_A\phi
    + (\mu(\phi) - c) \cdot\nabla\mu(\phi), \frac{\partial{\phi}}{\partial t}\>\Big) \,dv_g \\
    &= - 2\int_\Sigma\Big(\abs{\frac{\partial{A}}{\partial t}}^2 + \abs{\frac{\partial{\phi}}{\partial t}}^2\Big) \,dv_g.
\end{aligned}
\end{equation}
Integrating this inequality on $[0, t]$ yields (\ref{e:energy-inequ}).
\end{proof}

We also have the following local energy inequality.
\begin{lem}\label{l:energy-inequ-local}
{\rm{(local energy inequality)}} There exist $R_0>0$ and $C_0>0$ depending only $\Sigma$ such that for $0<t<T$ and $x\in\Sigma$,
it holds
\begin{equation}\label{e:energy-inequ-local}
\E(t, B_R(x)) \le \E(0, B_{2R}(x)) + \frac{Ct}{R^{2}}\E(0).
\end{equation}
\end{lem}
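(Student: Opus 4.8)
The plan is to derive the local energy inequality by localizing the computation that proved the global energy inequality in Lemma \ref{l:energy-inequ}. First I would fix a cutoff function: choose $R_0>0$ smaller than the injectivity radius of $(\Sigma,g)$, and for $x\in\Sigma$ and $R\le R_0$ pick $\eta\in C_c^\infty(B_{2R}(x))$ with $0\le\eta\le1$, $\eta\equiv1$ on $B_R(x)$, and $|\nabla\eta|\le C/R$, $|\nabla^2\eta|\le C/R^2$, where $C$ depends only on $\Sigma$. The quantity to control is $\frac{d}{dt}\int_\Sigma \eta^2\, e(A(t),\phi(t))\,dv_g$. Rather than differentiating the energy density pointwise (which would require the Bochner-type equation \eqref{e:curvature1} and its analogue for $D_A\phi$), the cleaner route is to multiply the two evolution equations in \eqref{e:heat} by $\eta^2\frac{\partial A}{\partial t}$ and $\eta^2\frac{\partial\phi}{\partial t}$ respectively and integrate by parts, exactly as in Lemma \ref{l:energy-inequ} but carrying the weight $\eta^2$ through the integration by parts.

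The key steps, in order: (1) Write $\frac{d}{dt}\int_\Sigma\eta^2 e\,dv_g = 2\int_\Sigma\eta^2\big(\langle \partial_t F_A, F_A\rangle + \langle\partial_t(D_A\phi), D_A\phi\rangle + \langle(\mu(\phi)-c)\cdot\nabla\mu(\phi),\partial_t\phi\rangle\big)\,dv_g$, using $\partial_t F_A = D_A(\partial_t A)$ and $\partial_t(D_A\phi) = D_A(\partial_t\phi) + (\partial_t A)\cdot\phi$. (2) Integrate by parts in the $F_A$ term: $\int\eta^2\langle D_A(\partial_t A),F_A\rangle = \int\langle \partial_t A, D_A^*(\eta^2 F_A)\rangle = \int\eta^2\langle\partial_t A, D_A^*F_A\rangle - 2\int\eta\langle\partial_t A,\nabla\eta\cdot F_A\rangle$, and similarly for the $D_A\phi$ term. (3) Collect the terms without a derivative of $\eta$: these reassemble, via \eqref{e:heat}, into $-2\int\eta^2\big(|\partial_t A|^2+|\partial_t\phi|^2\big)\,dv_g\le0$, which we simply discard. (4) The remaining ``boundary'' terms are of the form $\eta|\nabla\eta|\,|\partial_t A|\,|F_A|$ and $\eta|\nabla\eta|\,|\partial_t\phi|\,|D_A\phi|$; bound them by Cauchy–Schwarz as $\varepsilon\,\eta^2(|\partial_t A|^2+|\partial_t\phi|^2) + \varepsilon^{-1}|\nabla\eta|^2 e$, absorb the first piece into the discarded negative term, and use $|\nabla\eta|^2\le C/R^2$ to get $\frac{d}{dt}\int_\Sigma\eta^2 e\,dv_g\le \frac{C}{R^2}\int_{B_{2R}(x)} e\,dv_g\le \frac{C}{R^2}\E(0)$, the last bound by Lemma \ref{l:energy-inequ}. (5) Integrate in time over $[0,t]$ and use $\eta\equiv1$ on $B_R(x)$ and $\mathrm{supp}\,\eta\subset B_{2R}(x)$ to conclude $\E(t,B_R(x))\le\E(0,B_{2R}(x)) + \frac{Ct}{R^2}\E(0)$.

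The main obstacle I anticipate is bookkeeping the cross terms correctly when integrating by parts with the weight $\eta^2$ — in particular making sure that after the integration by parts the weight-free terms really do recombine into the perfect square $-2\eta^2(|\partial_t A|^2+|\partial_t\phi|^2)$, which requires using both equations of \eqref{e:heat} simultaneously and handling the extra term $(\partial_t A)\cdot\phi$ coming from $\partial_t(D_A\phi)$; this extra term pairs with $D_A\phi$ to produce exactly $\langle\partial_t A,\phi^*D_A\phi\rangle$, which is what makes the first equation of \eqref{e:heat} close up. A minor additional point is that the constants $R_0,C_0$ depend only on $\Sigma$: $R_0$ through the injectivity radius and curvature bounds needed to build $\eta$ with the stated derivative bounds, and $C_0$ through those same geometric quantities plus the $\#$-coefficients; no constant depends on the solution. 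One should also note this computation is a priori valid for the smooth solution on $[0,T)$, and since all terms are integrable it passes to the limit; the statement as written already restricts to $0<t<T$.
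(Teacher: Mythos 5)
Your proposal is correct and follows essentially the same route as the paper: localize the energy identity with a cutoff $\eta^2$, integrate by parts so that the flow equations \eqref{e:heat} recombine the weight-free terms into $-\eta^2(|\partial_t A|^2+|\partial_t\phi|^2)$, absorb the $|\nabla\eta|$ cross terms by Young's inequality, and control the remaining $\frac{C}{R^2}\int_\Sigma(|F_A|^2+|D_A\phi|^2)$ by the global energy inequality before integrating in time. The only superfluous ingredient is the bound on $|\nabla^2\eta|$, which is never needed since only first derivatives of the cutoff appear.
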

\begin{proof} Let $\eta \in C^\infty_0(B_{2R})$ such that $0\le \eta\le 1$, $\eta = 1$ on $B_R$,  and $|d\eta| \le \frac{C}{R}$. Then we
can calculate
\begin{equation*}
\begin{aligned}
  \frac12\dt{} \int_\Sigma e(A, \phi)\eta^2 \,dv_g
  =& \int_\Sigma \eta^2\< D_A\phi, \frac{\partial A}{\partial t}\phi \> \,dv_g
  + \int_\Sigma \eta^2\< D_A\phi, D_A\frac{\partial\phi}{\partial t} \> \,dv_g \\
  &+ \int_\Sigma\eta^2\< F_A, D_A\frac{\partial A}{\partial t}\> \,dv_g
  + \int_\Sigma\eta^2(\mu(\phi) - c)\cdot\nabla\mu(\phi)\cdot
  \frac{\partial\phi}{\partial t} \,dv_g.
\end{aligned}
\end{equation*}
Rearranging terms in the right side and integrating by parts, we have
\begin{equation*}
\begin{aligned}
  \frac12\dt{} \int_\Sigma e(A, \phi)\eta^2 \,dv_g
  =& \int_\Sigma\eta^2\< D_A^*D_A\phi + (\mu(\phi)-c)\cdot\nabla\mu(\phi), \frac{\partial\phi}{\partial t} \> \,dv_g\\
  &+ \int_\Sigma \eta^2\< D_A^*F_A + \phi^* D_A\phi, \frac{\partial A}{\partial t} \> \,dv_g \\
  &+ \int_\Sigma d(\eta^2)\< D_A\phi, \frac{\partial\phi}{\partial t} \> \,dv_g
  + \int_\Sigma d(\eta^2)\< F_A, \frac{\partial A}{\partial t}\> \,dv_g.
\end{aligned}
\end{equation*}
From the equation (\ref{e:heat}) and Young's inequality, we obtain
\begin{equation*}
\begin{aligned}
  \frac12\dt{} \int_\Sigma e(A, \phi)\eta^2 \,dv_g
  \le& - \frac12\int_\Sigma\eta^2\big(|\frac{\partial\phi}{\partial t}|^2 + |\frac{\partial A}{\partial t}|^2\big)  \,dv_g
  + \frac{C}{R^{2}}\int_\Sigma \big(|D_A\phi|^2 + |F_A|^2\big) \,dv_g.
\end{aligned}
\end{equation*}
Integrating on $[0,t]$, we arrive at
\begin{equation*}
\begin{aligned}
 & \E(t, B_{R}(x)) + \int_0^t\int_{B_R(x)}\Big(|\frac{\partial\phi}{\partial t}|^2 + |\frac{\partial A}{\partial t}|^2\Big)  \,dv_g \\
 & \le \E(0, B_{2R}(x))
  + \frac{C}{R^2}\int_0^t\int_{\Sigma}(|D_A\phi|^2 + |F_A|^2) \,dv_g dt.
  \end{aligned}
\end{equation*}
This and the energy inequality (\ref{e:energy-inequ}) now yield the local energy inequality (\ref{e:energy-inequ-local}).
\end{proof}

Next, we will derive a Bochner type formula for smooth solutions to the YMH flow equation (\ref{e:heat}).
In order to do this, we first set for any $\lambda>0$,
\[ \widehat{e}_\lambda(A, \phi) := \sqrt{\lambda+|F_A|^2}+|D_A\phi|^2 . \]
Recall that for any smooth section $\Phi$ of $\A\times\S$, there holds
\begin{equation}\label{e1}
  \frac12 \Delta_g|\Phi|^2 = -\< \nabla_A^*\nabla_A \Phi, \Phi\> + |\nabla_A\Phi|^2.
\end{equation}

\begin{lem}\label{l:bochner}
{\rm{(Bochner formula)}} Let $(A,\phi)\in C^\infty([0, T), \A\times \S)$ be a smooth solution of the YMH
flow equation (\ref{e:heat}).
There exists a constant $C>0$ depending only on $\Sigma,\  \A,$ and $\S$ such that
\begin{equation}\label{bochner1}
 \big(\frac{\partial}{\partial t} - \Delta_g\big)\widehat{e}_1(A, \phi)
 \le C\big(1 + |F_A| + |D_A\phi|^2\big)\widehat{e}_1(A, \phi)
\end{equation}
holds on $\Sigma\times (0, T)$.
\end{lem}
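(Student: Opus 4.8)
The plan is to compute the heat operator applied to each of the two pieces of $\widehat e_1(A,\phi)=\sqrt{1+|F_A|^2}+|D_A\phi|^2$ separately, using the evolution equations derived above together with the Bochner/Weitzenb\"ock identities, and then to control all the curvature-type nonlinearities that appear. First I would treat the Higgs term $|D_A\phi|^2$. Since $\phi$ solves the second equation of~(\ref{e:heat}), a direct computation gives an evolution equation for $D_A\phi$ of Eells--Sampson type; combining it with~(\ref{e1}) applied to $\Phi=D_A\phi$ and with the Weitzenb\"ock formula~(\ref{e:weitzenbock1}) (which is exactly the case $\Psi=D_A\phi$ flagged in Section~2) yields
\[
\big(\tfrac{\partial}{\partial t}-\Delta_g\big)|D_A\phi|^2
\le -2|\nabla_A D_A\phi|^2 + C\big(1+|F_A|+|D_A\phi|^2\big)|D_A\phi|^2 + C|F_A|^2,
\]
where the $|F_A|^2$ on the right comes from the $F_A\#D_A\phi\#D_A\phi$ and $\phi^*F_A\phi$ terms after Young's inequality, and the curvature term $R_M\#d\phi\#d\phi\#D_A\phi$ of~(\ref{e:weitzenbock1}) contributes the $|D_A\phi|^2\cdot|D_A\phi|^2$ factor (here one uses that $M$ is compact so $R_M$, $\nabla\mu$, $\mu$ are all bounded, and that $|\mu(\phi)-c|$ is bounded). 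The $|F_A|^2$ leftover will be absorbed at the end because $|F_A|^2\le |F_A|\cdot\sqrt{1+|F_A|^2}$, i.e.\ it is bounded by $(1+|F_A|)\widehat e_1$.

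Next I would treat the curvature term. Starting from~(\ref{e:curvature1}) for $\partial_t F_A$ and using~(\ref{e1}) with $\Phi=F_A$, one gets
\[
\big(\tfrac{\partial}{\partial t}-\Delta_g\big)|F_A|^2
\le -2|\nabla_A F_A|^2 + C\big(1+|F_A|\big)|F_A|^2 + C|D_A\phi|^2|F_A|,
\]
the $|D_A\phi|^2|F_A|$ term arising from $\langle D_A\phi\#D_A\phi+\phi^*F_A\phi,\,F_A\rangle$ via Young's inequality in the form $|D_A\phi|^2|F_A|^2\le |D_A\phi|^2(|F_A|+|F_A|^2)$... more carefully one keeps it as $|D_A\phi|^2\widehat e_1$. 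Then I apply the elementary chain-rule inequality for $f(s)=\sqrt{1+s}$: since $f$ is concave and $f'=\frac1{2\sqrt{1+s}}$,
\[
\big(\tfrac{\partial}{\partial t}-\Delta_g\big)\sqrt{1+|F_A|^2}
= \frac{(\partial_t-\Delta_g)|F_A|^2}{2\sqrt{1+|F_A|^2}} + \frac{|\nabla_g|F_A|^2|^2}{4(1+|F_A|^2)^{3/2}}
\le \frac{(\partial_t-\Delta_g)|F_A|^2}{2\sqrt{1+|F_A|^2}} + \frac{|\nabla_A F_A|^2}{\sqrt{1+|F_A|^2}},
\]
using Kato's inequality $|\nabla_g|F_A||\le|\nabla_A F_A|$ in the last step. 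The key point is that the bad positive gradient term $|\nabla_A F_A|^2/\sqrt{1+|F_A|^2}$ is dominated by the good term $-2|\nabla_A F_A|^2/(2\sqrt{1+|F_A|^2})$ coming from the first summand, so these cancel (this is precisely why one regularizes $|F_A|$ by $\sqrt{1+|F_A|^2}$ rather than working with $|F_A|$ directly). What survives is
\[
\big(\tfrac{\partial}{\partial t}-\Delta_g\big)\sqrt{1+|F_A|^2}
\le \frac{C(1+|F_A|)|F_A|^2 + C|D_A\phi|^2|F_A|}{2\sqrt{1+|F_A|^2}}
\le C(1+|F_A|+|D_A\phi|^2)\,\widehat e_1.
\]

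Finally I would add the two estimates, discard the nonpositive gradient terms, and collect everything: every remaining term on the right is of the form (bounded coefficient)$\times(1+|F_A|+|D_A\phi|^2)\times(\sqrt{1+|F_A|^2}$ or $|D_A\phi|^2)$, hence bounded by $C(1+|F_A|+|D_A\phi|^2)\widehat e_1$, giving~(\ref{bochner1}). I expect the main obstacle to be bookkeeping rather than conceptual: one must carefully derive the evolution equation for $D_A\phi$ (commuting $\partial_t$ past $D_A$ produces a $\partial_t A$ term, i.e.\ a $(\partial_t A)\#\phi = (D_A^*F_A+\phi^*D_A\phi)\#\phi$ contribution which must be paired correctly), and one must be scrupulous about which Young's-inequality splitting keeps the final bound homogeneous of the right weight — in particular making sure no term like $|D_A\phi|^4$ or $|F_A|^{3/2}$ appears with the wrong companion factor. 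The cancellation of the $|\nabla_A F_A|^2$ terms and the use of Kato's inequality is the one genuinely structural step; everything else is the standard Bochner computation adapted to the coupled system, using compactness of $M$ throughout to bound $R_M,\mu,\nabla\mu$ and the lower-order algebraic terms.
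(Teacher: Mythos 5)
Your proposal is correct and follows essentially the same route as the paper: treat the two pieces of $\widehat{e}_1$ separately, estimating $(\frac{\partial}{\partial t}-\Delta_g)|D_A\phi|^2$ via the flow equation and the Weitzenb\"ock formula (\ref{e:weitzenbock1}), estimating $(\frac{\partial}{\partial t}-\Delta_g)\sqrt{1+|F_A|^2}$ from (\ref{e:curvature1}) with Kato's inequality used to cancel the positive gradient term against $-|\nabla_A F_A|^2$, and then adding. The one step you defer to ``bookkeeping'' --- the term $\langle (D_A^*F_A)\phi, D_A\phi\rangle$ produced by $\frac{\partial A}{\partial t}$ --- is resolved in the paper by an exact cancellation against $D_A^*(F_A\phi)=(D_A^*F_A)\phi+F_A\# D_A\phi$ arising when $D_AD_A^*D_A\phi$ is converted to $\nabla_A^*\nabla_A D_A\phi$, so no derivative of $F_A$ survives and no Young splitting is needed there.
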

\begin{proof} First, applying the identity (\ref{e1}), we have
\begin{equation*}
  \frac12 \Delta_g|F_A|^2 = - \< \nabla_A^*\nabla_A F_A, F_A\> + |\nabla_A F_A|^2.
\end{equation*}
Second, applying the equation (\ref{e:curvature1}), we have
\begin{equation*}
\begin{aligned}
  \frac12 \frac{\partial}{\partial t} |F_A|^2
  &= -\< \nabla^*\nabla F_A + F_A\# F_A + R_\Si\# F_A, F_A\> - F_A\#D_A\phi\#D_A\phi - \< F_A\phi, F_A\phi\> \\
  &\le -\< \nabla^*\nabla F_A, F_A\> +C(1+|F_A| + |R_\Si|)|F_A|^2 + C|F_A||D_A\phi|^2.
\end{aligned}
\end{equation*}
Combining these two inequalities yields
\begin{equation}\label{e3}
  \frac12\big(\frac{\partial}{\partial t} - \Delta_g\big)|F_A|^2 + |\nabla_A F_A|^2 \le C(1+ |F_A|)|F_A|^2 + C|F_A||D_A\phi|^2.
\end{equation}
Direct calculations imply that
\begin{eqnarray}\label{bochner-lhs}
 \frac12\big(\frac{\partial}{\partial t} - \Delta_g\big)|F_A|^2
 &=& \frac12\big(\frac{\partial}{\partial t} - \Delta_g\big)\big(\sqrt{1+|F_A|^2}\big)^2\nonumber\\
 &=&\sqrt{1+|F_A|^2}\Big(\frac{\partial}{\partial t} - \Delta_g\Big)\sqrt{1+|F_A|^2}
 -\Big|\nabla\sqrt{1+|F_A|^2}\Big|^2.
 \end{eqnarray}
Recall that by Kato's inequality, it holds that
$$\big|\nabla |F_A|\big|\le \big|\nabla_A F_A\big|,$$
and hence we have that
\begin{eqnarray}\label{kato1}
\big|\nabla_A F_A\big|^2-\Big|\nabla\sqrt{1+|F_A|^2}\Big|^2
&\ge&\big|\nabla_AF_A\big|^2-\frac{|F_A|^2\big|\nabla|F_A|\big|^2}{1+|F_A|^2}\nonumber\\
&\ge& \big|\nabla_AF_A\big|^2-\big|\nabla|F_A|\big|^2\ge 0.
\end{eqnarray}
It is clear that by substituting (\ref{bochner-lhs}) into (\ref{e3}) and applying (\ref{kato1}), we obtain
\begin{equation}\label{e4}
  \big(\frac{\partial}{\partial t} - \Delta_g\big)\sqrt{1+|F_A|^2} \le C(1+ |F_A|)|F_A| + C|D_A\phi|^2.
\end{equation}
Next we will make a similar calculation on $|D_A\phi|^2$. By (\ref{e1}), we have
\begin{equation*}
  \frac12 \Delta_g|D_A\phi|^2 = - \< \nabla_A^*\nabla_A D_A\phi, D_A\phi\> + |\nabla_A D_A\phi|^2.
\end{equation*}
Applying the YMH  flow equation (\ref{e:heat}), we have
\begin{equation*}
  \begin{aligned}
    &\frac12 \frac{\partial}{\partial t} |D_A\phi|^2
    = \big\< D_A(\frac{\partial\phi}{\partial t}), D_A\phi\big\> + \big\< (\frac{\partial}{\partial t}{D_A})\phi, D_A\phi\big\>\\
    &= -\big\< D_A\big(D_A^*D_A\phi + \nabla h(\phi)\big), D_A\phi\big\>
         - \big\< \big(D_A^*F_A+\phi^*D_A\phi\big)\phi, D_A\phi \big\>\\
    &= -\big\< D_AD_A^*D_A\phi, D_A\phi\big\> - \nabla^2 h(\phi)\big(D_A\phi, D_A\phi\big)
         - \big\<(D_A^*F_A)\phi, D_A\phi\big\> - \big|\phi^*D_A\phi\big|^2.
  \end{aligned}
\end{equation*}
Here $h(\phi) := \frac12 \abs{\mu(\phi) - c}^2$. Applying the Weiztenb\"ock formula (\ref{e:weitzenbock1}), we have
\begin{equation*}
  \begin{aligned}
    &D_AD_A^*D_A\phi = \Delta_A D_A\phi - D_A^*D_AD_A\phi\\
    &= \nabla_A^*\nabla_A D_A\phi + F_A\#D_A\phi + R_\Si\# D_A\phi + R_M\#D_A\phi\#D_A\phi\#D_A\phi - D_A^*(F_A\phi)\\
    &= \nabla_A^*\nabla_A D_A\phi + F_A\#D_A\phi + R_\Si\# D_A\phi + R_M\#D_A\phi\#D_A\phi\#D_A\phi\\
    &\quad - (D_A^*F_A)\phi-F_A\#D_A\phi.
  \end{aligned}
\end{equation*}
Combining these two equations together, it is not hard to see that
\begin{equation}\label{e5}
\big(\frac{\partial}{\partial t} - \Delta_g\big) |D_A\phi|^2 \le -|\nabla_A D_A\phi|^2 + C(1+ |F_A|+|D_A\phi|^2)|D_A\phi|^2.
\end{equation}
Putting (\ref{e4}) together with (\ref{e5}) yields Bochner's formula (\ref{bochner1}). The proof is complete.
\end{proof}

Now we will prove an $\ep$-gradient estimate for smooth solutions $(A,\phi)$ of
the YMH  flow equation (\ref{e:heat}).
For any point $z :=(x,t)\in \Si\times(0, T)$, denote the parabolic ball, with center $z$ and radius $R>0$,  by
\[ \mathbb P_R(z) := \Big\{(y, s)\in \Sigma\times \Real \  \big |\ y\in B_R(x), \ t-R^2<s<t\Big\}.  \]

\begin{lem}\label{l:ep-reg}
{\rm{($\ep$-regularity)}}
There exist positive constants $\ep_0, R_0, C_0$ depending only on $\Sigma,\ \A$, and $\S$ such that for any
$z_0=(x_0, t_0)\in\Sigma\times (0, T)$ and $0<R<\min\big\{R_0, \sqrt{t_0}\big\}$, if
\begin{equation}\label{e:ep-condition}
  \sup_{t_0-R^2<t<t_0}\int_{B_{R}(x_0)}\big(|F_A| + |D_A\phi|^2\big)\,dv_g \le \ep_0,
\end{equation}
then for any $0<r<R$, it holds that
\begin{equation}\label{ep-reg}
 \sup_{z\in \mathbb P_r(z_0)} \big(|F_A| + |D_A\phi|^2\big)(z)\le \frac{C_0}{(R-r)^2}.
 \end{equation}
\end{lem}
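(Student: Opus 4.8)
The plan is to prove the $\epsilon$-regularity statement by combining the Bochner-type inequality from Lemma~\ref{l:bochner} with a standard parabolic Moser iteration, after first upgrading the smallness of the $\widehat{e}_1$-integral from a single time-slice control to control on a full parabolic neighborhood via the local energy inequality (Lemma~\ref{l:energy-inequ-local}). Set $f := \widehat{e}_1(A,\phi) = \sqrt{1+|F_A|^2} + |D_A\phi|^2$, which satisfies $(\partial_t - \Delta_g) f \le C(1 + |F_A| + |D_A\phi|^2) f \le C(1 + f) f$ on $\Sigma \times (0,T)$. The quantity $|F_A| + |D_A\phi|^2 \le f$ is what we want to bound pointwise, so it suffices to get an $L^\infty$ bound on $f$ over $\mathbb{P}_r(z_0)$.

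First I would fix $z_0 = (x_0, t_0)$ and the radius $R$, and observe that the hypothesis~\eqref{e:ep-condition} bounds $\int_{B_R(x_0)} (|F_A| + |D_A\phi|^2)\,dv_g \le \epsilon_0$ uniformly in $t \in (t_0 - R^2, t_0)$; together with the trivial bound $\int_{B_R} \sqrt{1+|F_A|^2} \le |B_R| + \int_{B_R}|F_A|$, this gives $\sup_{t_0 - R^2 < t < t_0} \int_{B_R(x_0)} f(\cdot, t)\,dv_g \le C(\epsilon_0 + R^2) =: \epsilon_1$, which we may assume small by choosing $\epsilon_0, R_0$ small. Next comes the core local estimate: for a cutoff $\eta \in C_0^\infty(\mathbb{P}_R(z_0))$, multiply the differential inequality for $f$ by $\eta^2 f$ and integrate by parts in space; the "bad" term $\int \eta^2 (1+f) f^2$ is handled by splitting $1 + f \le 1 + f$ and estimating $\int \eta^2 f \cdot f^2$ using the Sobolev inequality on the surface $\Sigma$ (dimension two: $\|g\|_{L^4}^2 \lesssim \|g\|_{L^2}\|g\|_{H^1}$, or $\|\nabla(\eta f)\|_{L^2}^2$ absorbed) together with the smallness of $\int \eta^2 f\,dv_g$ — this is exactly where the smallness $\epsilon_1$ is used to absorb the nonlinear term into the good gradient term $\int \eta^2 |\nabla f|^2$ on the left. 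This yields a reverse-type energy inequality controlling $\|f\|_{L^\infty_t L^2_x}$ and $\|\nabla f\|_{L^2_{x,t}}$ on slightly smaller parabolic balls by $\frac{C}{(R-r)^2}$ times a lower-order quantity, hence (via parabolic Sobolev embedding) an $L^p$ bound for some $p > 2$.

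Then I would run the Moser iteration: for $p_k = 2 \cdot (\text{scaling exponent})^k \to \infty$ and a nested sequence of parabolic balls $\mathbb{P}_{r_k}(z_0)$ with $r_k \downarrow r$, multiply the inequality for $f$ by $\eta_k^2 f^{p_k - 1}$, integrate by parts, apply the parabolic Sobolev inequality, and obtain a recursive estimate $\|f\|_{L^{p_{k+1}}(\mathbb{P}_{r_{k+1}})} \le (C 4^k / (R-r)^2)^{1/p_k} \|f\|_{L^{p_k}(\mathbb{P}_{r_k})}$; iterating and summing the resulting geometric-type series in the exponents gives $\|f\|_{L^\infty(\mathbb{P}_r(z_0))} \le \frac{C_0}{(R-r)^2}$ as claimed. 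The main obstacle, and the step requiring the most care, is the absorption step in the first energy estimate: because the nonlinearity $(1+f)f^2$ is genuinely supercritical-looking, one must use the two-dimensional Sobolev/Ladyzhenskaya inequality to trade $\int \eta^2 f^3$ for $\epsilon_1 \cdot \big(\int \eta^2 |\nabla f|^2 + \text{l.o.t.}\big)$, which only closes because $\int_{B_R} f\,dv_g$ is small — so one has to track constants carefully to ensure the smallness of $\epsilon_0$ (and $R_0$) genuinely suffices, and to ensure the iteration constants do not blow up. A secondary technical point is that $f$ is only Lipschitz (Kato's inequality was already invoked in Lemma~\ref{l:bochner}), so all the integrations by parts should be understood in the weak/distributional sense, which is standard but should be mentioned.
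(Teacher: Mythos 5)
Your overall strategy (Bochner inequality for $f=\widehat e_1$ plus a direct parabolic Moser iteration at the original scale) is genuinely different from the paper's proof, which uses a Schoen-type point-picking and rescaling argument: one defines $v(r)=(R-r)^2\sup_{\mathbb P_r(z_0)}\widehat e_1$, picks the point where the maximum is attained, rescales by $\rho_0=e_0^{-1/2}$ so that the rescaled density is bounded by $4$ on $\mathbb P_1(0)$, and only then applies Moser's Harnack inequality --- to a subsolution of $(\partial_t-\Delta)w\le Cw$ with \emph{bounded} coefficient --- to derive a contradiction $1\le C\epsilon_0$. The entire purpose of that rescaling is to avoid the absorption problem that your plan runs into head-on.

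The gap in your proposal is the absorption step, and I do not believe it closes as described. The hypothesis gives only $\sup_t\int_{B_R}f\,dv_g\le\epsilon_1$, i.e.\ smallness of $f$ in $L^\infty_tL^1_x$. After multiplying by $\eta^2f$ you must handle $\int\eta^2f^3=\int(\eta f)^2f$. The two-dimensional Ladyzhenskaya inequality gives $\int(\eta f)^2 f\le\|\eta f\|_{L^4}^2\|f\|_{L^2(B_R)}\le C\|\eta f\|_{L^2}\|\eta f\|_{H^1}\|f\|_{L^2(B_R)}$, so absorption into $\int|\nabla(\eta f)|^2$ requires smallness of $\|f\|_{L^2(B_R)}$, not of $\|f\|_{L^1(B_R)}$; the pairing that would use the $L^1$ bound, $\int b\,w^2\le\|b\|_{L^1}\|w\|_{L^\infty}^2$, fails because $H^1\not\hookrightarrow L^\infty$ in dimension two. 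This is exactly the borderline case ($b\in L^\infty_tL^{n/2}_x$ with $n=2$) where Moser iteration with a small potential breaks down logarithmically. The analogous estimate in the harmonic-map literature is carried out at the level of $|\nabla u|$ rather than $|\nabla u|^2$, precisely so that the small energy $\int|\nabla u|^2$ \emph{is} the square of the $L^2$ norm of the function being iterated; your $f$ already contains $|D_A\phi|^2$ and $\sqrt{1+|F_A|^2}$, for which only $L^1$ smallness is hypothesized. A secondary gap: your first energy estimate needs $\int\eta^2f^2$ at some starting time slice, and no bound on $\int f^2$ at any time is available from \eqref{e:ep-condition}. Both difficulties disappear in the paper's argument, where the Harnack inequality is applied at unit scale and its right-hand side is controlled by exactly the $L^1$-type quantity that \eqref{e:ep-condition} provides after undoing the dilation.
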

\begin{proof} Let $i_0=i_0(\Sigma)>0$  denote the injectivity radius of $\Sigma$ and $0<R_0 < \min\{i_0, \sqrt{\epsilon_0}, 1\}$. Note that
$\F\big|_{B_{R_0}(x_0)}$ is  a trivial fiber bundle. For $0<R\le R_0$, define $v:[0,R]\to\mathbb R_+$ by
\[ v(r) := (R-r)^2\sup_{z\in \mathbb P_r(z_0)}\big(\sqrt{1+|F_A|^2}+|D_A\phi|^2\big)(z). \]
Assume that $v$ attains its maximum over $[0,R]$ at $r_0\in (0, R)$.  Let $z_1 := (x_1, t_1) \in \mathbb P_{r_0}(z_0)$ be
such that
\[ e_0 := \big(\sqrt{1+|F_A|^2}+|D_A \phi|^2\big)(z_1)
= \sup_{z\in \mathbb P_{r_0}(z_0)} \big(\sqrt{1+|F_A|^2}+|D_A\phi|^2\big)(z). \]
It is clear that for any $0\le r\le R$,
\begin{equation}\label{e6}
  v(r) \le v(r_0) = (R-r_0)^2\sup_{z\in \mathbb P_{r_0}(z_0)}\big(\sqrt{1+|F_A|^2}+|D_A \phi|^2\big)(z) = (R-r_0)^2 {e}_0.
\end{equation}
Observe that if $(R-r_0)^2 e_0\le 4$, then we would have
$$v(r)\le v(r_0)=(R-r_0)^2 e_0\le 4,\ \forall\ 0\le r\le R, $$
which implies (\ref{ep-reg}). Hence we may assume
\begin{equation}\label{radius_ass}
(R-r_0)^2 e_0>4.
\end{equation}
For $\lambda>0$, define the parabolic dilation $P_\lambda:\mathbb R^2\times \mathbb R\to\mathbb R^2\times\mathbb R$ by
$$P_\lambda(x,t)=(\lambda x, \lambda^2 t).$$
Denote $\rho_0 = {e}_0^{-\frac12}$ and define the rescaled pair $(\phi_{\rho_0}, A_{\rho_0})$ and the metric $g_{\rho_0}$ by
$$\phi_{\rho_0}(z) = \phi\big(z_1+ P_{\rho_0}(z)\big), \  A_{\rho_0}(z) = \rho_0A\big(z_1+ P_{\rho_0} (z)\big),
\ g_{\rho_0}(x)=g(x_1+\rho_0x),
$$
for $z\in \displaystyle \mathbb P_{\frac{r_0}{\rho_0}}\Big(P_{\rho_0^{-1}}(z_0-z_1)\Big)$
and $x\in \displaystyle B_{\frac{r_0}{\rho_0}}\Big(\frac{x_0-x_1}{\rho_0}\Big)$.
It is easy to check that
\[ \big|F_{A_{\rho_0}}\big|^2(z) = \rho_0^4\big|F_A\big|^2(P_{\rho_0}(z)), \  \big|D_{A_{\rho_0}}\phi_{\rho_0}\big|^2(z)
= \rho_0^2\big|D_A\phi\big|^2(P_{\rho_0}(z)). \]
Note that (\ref{radius_ass}) is equivalent to
\begin{equation}\label{e60}
\rho_0< \frac{R-r_0}2.
\end{equation}
Observe that since
\begin{eqnarray*} \widehat{e}_{\rho_0^4}(A_{\rho_0}, \phi_{\rho_0})(z)
&=& \Big(\sqrt{\rho_0^{4}+|F_{A_{\rho_0}}|^2}+\big|D_{A_{\rho_0}}\phi_{\rho_0}\big|^2\Big)(z)\\
&=&\rho_0^2\widehat{e}_1(\phi, A)(z_1+P_{\rho_0}(z)),
\end{eqnarray*}
for all $z\in \mathbb P_{\frac{r_0}{\rho_0}}\Big(P_{\rho_0^{-1}}(z_0-z_1)\Big)$,
it follows that
\[ \widehat{e}_{\rho_0}(A_{\rho_0}, \phi_{\rho_0})(0)
= \rho_0^2\widehat{e}_1(A, \phi)(z_1)=e_0^{-1}e_0= 1. \]
From (\ref{e6}), we have that $\mathbb P_{\rho_0}(z_1) \subset \mathbb P_{\frac{R+r_0}{2}}(z_0)$ and
hence by the definition of $v(r)$, it holds
\begin{equation}\label{grad_bound}
\begin{aligned}
  \sup_{z\in\mathbb P_1(0)}\widehat{e}_{\rho_0^4}(A_{\rho_0}, \phi_{\rho_0})(z)&
  = \rho_0^2\sup_{z\in \mathbb P_{\rho_0}(z_1)}\widehat{e}_1(A, \phi)(z) \\
  &\le e_0^{-1}\sup_{z\in\mathbb P_{\frac{R+r_0}{2}}(z_0)}
  \widehat{e}_1(A, \phi)(z) = e_0^{-1}\big(\frac{R-r_0}{2}\big)^{-2} v\big(\frac{R+r_0}{2}\big)\\
  &\le e_0^{-1}\cdot \big(\frac{R-r_0}{2}\big)^{-2}(R-r_0)^2 e_0 = 4.
\end{aligned}
\end{equation}
From Bochner's formula (\ref{bochner1}) for $(\phi, A)$ and straightforward calculations, we have
that
\begin{eqnarray} \big(\frac{\partial}{\partial t} - \Delta_{g_{\rho_0}}\big)
\widehat{e}_{\rho_0^4}\big(A_{\rho_0}, \phi_{\rho_0}\big)(z)
&=&\rho_0^4\Big[\big(\frac{\partial}{\partial t} - \Delta_{g}\big)
\widehat{e}_{1}\big(A, \phi\big)\Big](P_{\rho_0}(z))\nonumber\\
&\le& C\rho_0^4 \big(1+|F_A|+|D_A\phi|^2\big)\widehat{e}_1\big(A, \phi\big)(P_{\rho_0}(z))\nonumber\\
&=&C\big(\rho_0^2+|F_{A_{\rho_0}}|+|D_{A_{\rho_0}}\phi_{\rho_0}|^2\big)
\widehat{e}_{\rho_0^4}\big(A_{\rho_0}, \phi_{\rho_0}\big)(z)\nonumber\\
&\le& C\widehat{e}_{\rho_0^4}\big(A_{\rho_0}, \phi_{\rho_0}\big)(z),\label{subsolution}
\end{eqnarray}
where we have used (\ref{grad_bound}) and the fact $\rho_0\le 1$ in the last step.
Thus, by Moser's Harnack inequality for parabolic equations, we obtain
\begin{equation}
\begin{aligned}
  1=\widehat{e}_{\rho_0^4}\big(A_{\rho_0}, \phi_{\rho_0}\big)(0)
  &\le C \int_{\mathbb P_1(0)} \widehat{e}_{\rho_0^4}\big(A_{\rho_0}, \phi_{\rho_0}\big)= C\rho_0^{-2}\int_{\mathbb P_{\rho_0}(z_1)}\widehat{e}_1(A, \phi)\\
  &\le C \sup_{t_1 - \rho_0^2 < t <t_1} \int_{B_{\rho_0}(x_1)}\widehat{e}_1(A, \phi)\\
  &\le C\sup_{t_0-R^2<t<t_0}\int_{B_{R}(x_0)}(1+|F_A|+|D_A\phi|^2)\\
  & \le C(\ep_0+R_0^2)\le C\epsilon_0.
\end{aligned}
\end{equation}
This is clearly impossible if we choose a sufficiently small $\ep_0>0$. Thus (\ref{radius_ass}) doesn't hold, which implies
$$e_0^{-\frac12} \ge \frac{R-r_0}{2}$$
and hence $v(r) \le 4$ for all $0\le r\le R$. In particular, we have
\[ \sup_{z\in \mathbb P_r(z_0)}\widehat{e}_1(\phi, A)(z) = \frac{1}{(R-r)^2}v(r)\le \frac{4}{(R-r)^{2}}. \]
This completes the proof.
\end{proof}

Based on the $\epsilon$-regularity Lemma \ref{l:ep-reg},  we can derive point-wise estimates for higher order
derivatives of smooth solutions $(\phi, A)$ of the YMH  flow equation (\ref{e:heat}),
under the small energy condition (\ref{e:ep-condition}).

\begin{lem}\label{l:higher-reg} Under the same assumptions as in Lemma \ref{l:ep-reg},  there holds
\begin{equation}\label{e:higher-reg}
  \sup_{\mathbb P_{\frac{R}{2}}(z_0)}\big(R^{l+1}|\nabla^{l-1}_A F_A|+R^l|\nabla^l_A\phi|\big) <C\big(l,\epsilon_0\big)
\end{equation}
for any $l\in \Integer^+$.
\end{lem}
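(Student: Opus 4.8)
The plan is to run a standard bootstrap/Schauder argument off of the $\epsilon$-regularity Lemma \ref{l:ep-reg}. First I would fix $z_0$ and assume, after the parabolic rescaling already performed in the proof of Lemma \ref{l:ep-reg}, that the hypothesis \eqref{e:ep-condition} holds with the conclusion \eqref{ep-reg}, so that on $\mathbb P_{\frac{3R}{4}}(z_0)$ we have the uniform zeroth-order bound $|F_A| + |D_A\phi|^2 \le C(\epsilon_0) R^{-2}$. By a further dilation $z\mapsto z_0 + P_R(z)$ it suffices to prove the estimate on the unit parabolic ball $\mathbb P_{1/2}(0)$ under the assumption $|F_A| + |D_A\phi|^2 \le C(\epsilon_0)$ on $\mathbb P_{3/4}(0)$ and in a trivializing gauge on the (trivial) bundle $\F|_{B_{R_0}(x_0)}$; undoing the dilation at the end produces the powers $R^{l+1}$ and $R^l$ by scaling.

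The induction is on $l$. For the base step, one differentiates the evolution equations \eqref{e:curvature1} and the analogous equation for $D_A\phi$ once more. Commuting $\nabla_A$ with $\frac{\partial}{\partial t}-\nabla_A^*\nabla_A$ produces curvature terms, all of which are already controlled pointwise by the $l=0$ bound; hence $\nabla_A F_A$ and $\nabla_A D_A\phi = \nabla_A^2\phi$ satisfy linear parabolic inequalities of the schematic form
\begin{equation}\label{e:higherbochner}
\big(\tfrac{\partial}{\partial t}-\Delta_g\big)\big(|\nabla_A F_A|^2 + |\nabla_A^2\phi|^2\big) + |\nabla_A^2 F_A|^2 + |\nabla_A^3\phi|^2 \le C\big(|\nabla_A F_A|^2 + |\nabla_A^2\phi|^2\big) + C,
\end{equation}
with $C=C(\epsilon_0)$, after absorbing the good negative gradient terms via Young's inequality and Kato's inequality exactly as in Lemma \ref{l:bochner}. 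Integrating \eqref{e:higherbochner} against a cutoff gives an $L^2$-bound on $\nabla_A F_A, \nabla_A^2\phi$ on a slightly smaller parabolic ball (using that $\int |F_A|^2 + |D_A\phi|^2$ is bounded by the energy); then a second cutoff argument, or the same sub-solution/Moser argument used in Lemma \ref{l:ep-reg}, upgrades this to the pointwise bound $\sup_{\mathbb P_{5/8}(0)}\big(|\nabla_A F_A| + |\nabla_A^2\phi|\big) \le C(1,\epsilon_0)$. The inductive step is identical in structure: assuming pointwise bounds on $\nabla_A^{k-1}F_A$ and $\nabla_A^k\phi$ for all $k\le l$ on $\mathbb P_{r_l}(0)$, differentiate once more, commute derivatives past the heat operator (all the extra terms are products of lower-order quantities already bounded by the inductive hypothesis times smooth geometric coefficients of $\Sigma$ and $M$), absorb gradient terms, integrate against a cutoff supported in $\mathbb P_{r_l}(0)$ and equal to $1$ on $\mathbb P_{r_{l+1}}(0)$ with $r_{l+1}<r_l$, and apply the parabolic sub-mean-value inequality to pass from $L^2$ to $L^\infty$. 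Since only finitely many shrinkings are needed to reach any fixed $l$, one may arrange all of them to contain $\mathbb P_{1/2}(0)$.

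The main technical obstacle is bookkeeping rather than anything deep: one must keep track of the curvature commutators carefully, since $[\nabla_A, \nabla_A]$ contributes $F_A\#(\cdot)$ and, on sections of $\phi^*T\F$, also $R_M\#d\phi\#d\phi\#(\cdot)$ as in \eqref{e:weitzenbock1}, so that each differentiation of the $\phi$-equation generates terms involving $\nabla_A^j\phi$ for all $j\le l$ (and likewise for the curvature), all of which happen to be exactly the quantities controlled by the previous induction steps — so the scheme closes, but the constant $C(l,\epsilon_0)$ genuinely deteriorates with $l$. A secondary point to be careful about is that the argument takes place in a local trivialization, so "$\nabla_A^l\phi$" must be interpreted as the intrinsic covariant derivative on $\phi^*T\F$ and the gauge must be chosen so that the connection coefficients themselves are controlled; this is automatic here because $F_A$ is pointwise bounded and $B_{R_0}(x_0)$ is a ball, e.g. by passing to a Coulomb/exponential-type gauge, or simply by noting that the estimates \eqref{e:higher-reg} are stated for $|\nabla_A^{l-1}F_A|$ and $|\nabla_A^l\phi|$, which are gauge-invariant pointwise norms, so the choice of trivialization only enters the intermediate linear parabolic estimates. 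I would therefore present the proof as: reduce to the unit ball by scaling; state the schematic differentiated Bochner inequalities; run the induction with the cutoff + Moser iteration as above; and rescale back to recover the claimed $R$-powers.
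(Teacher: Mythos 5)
Your overall strategy coincides with the paper's: rescale to unit scale, induct on the order of derivatives via cutoff energy estimates for the differentiated evolution equations, and pass to pointwise bounds. The only structural difference is where the $L^\infty$ upgrade happens: the paper first proves the spacetime $L^2$ bounds \eqref{e9} for \emph{all} orders and invokes parabolic Sobolev embedding once at the end, controlling the multilinear commutator terms $\mathcal Q_1^k,\mathcal Q_2^k$ by the two-dimensional embedding $H^1\hookrightarrow L^q$; you instead Moser-iterate at each order and feed the resulting pointwise bounds into the next step, which makes the product bookkeeping easier. Both variants close.

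There is, however, one step that does not work as written. In your base step you propose to obtain the spacetime $L^2$ bound on $\nabla_AF_A$ and $\nabla_A^2\phi$ by integrating your first-order Bochner inequality against a cutoff, ``using that $\int(|F_A|^2+|D_A\phi|^2)$ is bounded by the energy.'' But integrating an inequality of the form $(\partial_t-\Delta_g)u+(\mbox{good terms})\le Cu+C$ with $u=|\nabla_AF_A|^2+|\nabla_A^2\phi|^2$ requires an a priori bound on $\iint u$ itself --- both to absorb the term $C\iint\eta^2u$ by Gronwall and to select, via Fubini, a good initial time slice $t_*$ with $\int\eta^2u(t_*)$ controlled. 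That is exactly the quantity you are trying to estimate, and the good negative terms $|\nabla_A^2F_A|^2+|\nabla_A^3\phi|^2$ do not help: they control the \emph{next} order, not the current one. The correct ordering, which is what the paper carries out in \eqref{l=2}--\eqref{l=2.2}, is to first test the \emph{undifferentiated} equations (equivalently, integrate the zeroth-order inequalities \eqref{e3} and \eqref{e5}) against $\eta^2F_A$ and $\eta^2\nabla_A\phi$; there the good terms are precisely $\iint\eta^2|\nabla_AF_A|^2$ and $\iint\eta^2|\nabla_A^*\nabla_A\phi|^2$, and the right-hand sides are controlled by the energy together with the pointwise bound of Lemma \ref{l:ep-reg}. (The paper also needs the small-$\delta$ absorption between the two equations, since $\partial_tA$ appears in the differentiated $\phi$-equation and is comparable to $\nabla_AF_A$.) Once that order-one spacetime $L^2$ bound is in hand, your differentiated inequality can be integrated and Moser-iterated, and the same shift persists through the induction: the $L^2$ bound on the $k$-th derivatives must always be harvested from the good term at order $k-1$ before the order-$k$ inequality can be used. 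With that reordering your argument is complete and matches the paper's.
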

\begin{proof} Note that when $l=1$, (\ref{e:higher-reg}) follows directly from Lemma~\ref{l:ep-reg}.
By a simple scaling argument, we may assume $R=2$ and $z_0=(0,0)$.
In order to establish (\ref{e:higher-reg}) for all $l\ge 2$,  we need to show
\begin{equation}\label{e9}
\int_{\mathbb P_{1+2^{-l}}(0)} \big(|\nabla^{l-1}_AF_A|^2 + |\nabla_A^l\phi|^2\big) \le C(l,\epsilon_0)
\int_{\mathbb P_2(0)} e(\phi, A),
 \ \forall\ l\ge 2.
\end{equation}
From Lemma \ref{l:ep-reg}, we have the following estimate
\begin{equation}\label{grad_bound1}
\sup_{\mathbb P_{\frac32}(0)}\big(|F_A|+|D_A\phi|\big)\le C(\epsilon_0).
\end{equation}
We prove (\ref{e9}) by an induction on $l$.  Consider the case $l=2$,
we may assume for simplicity that $g|_{B_2(0)}$ is the euclidean metric.
Let $\eta\in C_0^\infty(B_2(0))$ such that $0\le \eta\le 1$, $\eta\equiv 1$ on $B_{\frac54}(0)$,
$\eta\equiv 0$ outside $B_{\frac32}(0)$, and $|\nabla\eta|\le 8$.
Multiplying (\ref{e:curvature1}) by $\eta^2F_A$, integrating the resulting equation over $B_2(0)$,
and applying integration by parts and H\"older's inequality, we obtain
\begin{equation}\label{l=2}
\begin{aligned}
  \frac12\dt{}\int_{B_2(0)}|F_A|^2\eta^2 &
  = -\int_{B_2(0)} \big(|\nabla_A F_A|^2\eta^2+\<\nabla_A F_A , F_A\nabla \eta^2\>\big)- \int_{B_2(0)}\eta^2\<R_\Si\#F_A, F_A\>\\
  &\quad - \int_{B_2(0)}\eta^2\<F_A\#F_A, F_A\>
  - \int_{B_2(0)} \eta^2\<D_A\phi\#D_A\phi+\phi^*F_A\phi, F_A\>\\
  &\le -\frac12\int_{B_2(0)}\eta^2|\nabla_A F_A|^2+
  C\int_{B_{\frac32}(0)}\big(|F_A|^2+|F_A|^3 + |D_A\phi|^4\big).
\end{aligned}
\end{equation}
By Fubini's theorem, we can find $t_*\in (-4, -(\frac54)^2)$ such that
\begin{equation}\label{fubini1}
\int_{B_2(0)} (|F_A|^2+|D_A\phi|^2)(t_*)\le C\int_{\mathbb P_2(0)}(|F_A|^2+|D_A\phi|^2).
\end{equation}
Integrating (\ref{l=2}) over $t\in [t_*, 0]$ and applying (\ref{fubini1}) and (\ref{grad_bound1}), we get
\begin{eqnarray}\label{l=2.1}
&&\int_{B_{\frac54}(0)} |F_A|^2(0)+\int_{t_*}^0\int_{B_{2}(0)}\eta^2|\nabla_A F_A|^2\nonumber\\
 &&\le\int_{B_{\frac54}(0)} |F_A|^2(t_*)+C\int_{\mathbb P_{\frac32}(0)}(|F_A|^2 +|F_A|^3 + |D_A\phi|^4)\nonumber\\
 &&\le C\int_{\mathbb P_2(0)}(|F_A|^2+|D_A\phi|^2)+C\int_{\mathbb P_{\frac32}(0)}(|F_A|^3 + |D_A\phi|^4)\nonumber\\
 &&\le C(\epsilon_0)\int_{\mathbb P_2(0)}\big(|F_A|^2+|D_A\phi|^2\big).
 \end{eqnarray}
To obtain a similar estimate for $\phi$,  we take $\nabla_A$ to both sides of the YMH 
flow equation (\ref{e:heat})$_2$ so that
it holds
\begin{equation}\label{heat2}
\begin{aligned}
  \frac{\partial}{\partial t}(\nabla_A\phi)
  &= -\nabla_A\nabla_A^*\nabla_A\phi - \nabla^2h(\phi)\cdot\nabla_A\phi + \frac{\partial A}{\partial t}\phi,
\end{aligned}
\end{equation}
where $h(\phi) = \frac12|\mu(\phi)-c|^2$.
Multiplying (\ref{heat2}) by $\eta^2\nabla_A\phi$, integrating the resulting equation on
$B_2(0)$, applying integration by parts and H\"older's inequality, and using the equation (\ref{e:heat})$_1$,
we obtain that, for any $0<\delta<1$,
\begin{equation*}
\begin{aligned}
\frac12\dt{}\int_{B_2(0)} |\nabla_A\phi|^2 \eta^2&\le -\int_{B_2(0)} \big(\eta^2|\nabla_A^*\nabla_A\phi|^2
+\<\nabla_A^*\nabla_A\phi, \nabla_A\phi \nabla\eta^2\>\big)\\
 &\quad+\int_{B_2(0)} \big|\nabla^2_\phi h(\phi)\big|\big|\nabla_A\phi\big|^2 \eta^2
 + \int_{B_2(0)}\big|\frac{\partial A}{\partial t}\big|\big|\nabla_A\phi\big|\eta^2\\
&\le -\frac12\int_{B_2(0)}\eta^2\big|\nabla_A^*\nabla_A\phi\big|^2
+ \delta\int_{B_2(0)}\eta^2\big|\frac{\partial A}{\partial t}\big|^2
+ C\delta^{-1}\int_{B_2(0)}\eta^2\big|\nabla_A\phi\big|^2\\
&\le -\frac12\int_{B_2(0)}\eta^2\big|\nabla_A^*\nabla_A\phi\big|^2
+ 2\delta\int_{B_2(0)}\eta^2\big|D_A^*F_A\big|^2
+ C\delta^{-1}\int_{B_2(0)}\eta^2\big|\nabla_A\phi\big|^2\\
&\le -\frac12\int_{B_2(0)}\eta^2\big|\nabla_A^*\nabla_A\phi\big|^2
+ C\delta\int_{B_2(0)}\eta^2\big|\nabla_AF_A\big|^2
+ C\delta^{-1}\int_{B_2(0)}\eta^2\big|\nabla_A\phi\big|^2.
\end{aligned}
\end{equation*}
Integrating this inequality over $[t_*,0]$ and applying (\ref{fubini1}),  we obtain
\begin{equation}\label{l=2.2}
\int_{B_2(0)} |\nabla_A\phi|^2(0)+ \int_{t_*}^0\int_{B_2(0)} \eta^2\big|\nabla_A^*\nabla_A\phi\big|^2
\le C\delta\int_{t_*}^0\int_{B_2(0)}\eta^2\big|\nabla_A F_A|^2 + C\delta^{-1}\int_{\mathbb P_2(0)}e(A, \phi).
\end{equation}
Adding (\ref{l=2.1}) and (\ref{l=2.2}) and choosing a sufficiently small $\delta>0$, we obtain
\begin{equation}\label{l=2.3}
\int_{B_2(0)} e(A, \phi)(0)+\int_{t_*}^0 \int_{B_2(0)} \eta^2(\big|\nabla_A^*\nabla_A\phi\big|^2+\big|\nabla_AF_A\big|^2\big)
\le C(\epsilon_0)\int_{\mathbb P_2(0)}e(A, \phi).
\end{equation}
Note that by integrating by parts, applying Ricci's identities for interchanging derivatives, and (\ref{grad_bound1}), we have
that
\begin{equation}\label{l=2.4}
\begin{aligned}
\int_{t_*}^0\int_{B_2(0)}\eta^2\big|\nabla^2_A\phi\big|^2 &\le 2\int_{t_*}^0\int_{B_2(0)}\eta^2\big|\nabla_A^*\nabla_A \phi\big|^2
+ C\int_{t_*}^0\int_{B_2(0)}\big(|\nabla\eta|^2|\nabla_A\phi|^2+\eta^2|\nabla_A\phi|^4\big)\\
&\le 2\int_{t_*}^0\int_{B_2(0)}\eta^2\big|\nabla_A^*\nabla_A \phi\big|^2
+ C(\epsilon_0)\int_{\mathbb P_2(0)}e(A, \phi),
\end{aligned}
\end{equation}
where the constant $C>0$ depends on the curvature $F_A$ and
the curvatures of $\Sigma$ and $M$.
Combining (\ref{l=2.3}) with (\ref{l=2.4}) yields (\ref{e9}) for $l=2$.

For any $k\ge 3$, we assume that (\ref{e9}) holds for all $l\le k$. We need to show that it also holds for $l = k+1$.
To do it, first apply the operator $\nabla_A^{k}$ to both sides of the YMH  flow equation (\ref{e:heat})$_2$ of $\phi$
the operator $\nabla_A^{k-1}$ to both sides of the YMH  flow equation (\ref{e:curvature1}). After interchanging the
order of derivatives, we obtain a system in the following form:
\begin{equation}\label{k-ymh}
\left\{
  \begin{aligned}
    \frac{\partial}{\partial t}\big(\nabla_A^{k}\phi\big) &= -\nabla_A^*\nabla_A^{k+1}\phi + \mathcal Q_1^k(\nabla_A\phi, F_A)
    +\nabla_A^{k}(\nabla h(\phi)), \\
    \frac{\partial}{\partial t}\big(\nabla_A^{k-1}F_A\big) &= -\nabla_A^*\nabla_A^{k} F_A + \mathcal Q_2^k(\nabla_A\phi, F_A)
    +\nabla_A^{k-1}(g(F_A, \phi)).
  \end{aligned}
  \right.
\end{equation}
where $\mathcal Q_1^k, \mathcal Q_2^k$ are lower order terms depending on derivatives of $\nabla_A\phi$ and $F_A$ up to order $k-1$, and
\[ g(F_A, \phi) = -R_\Si\#F_A -F_A\#F_A-D_A\phi\#D_A\phi-\phi^*F_A\phi. \]
More precisely, $\mathcal Q_1^k$ comes from changing $\nabla_A^k(\frac{\partial\phi}{\partial t})$  to $\frac{\partial}{\partial t}(\nabla_A^k \phi)$,
and $\nabla_A^k\nabla_A^*\nabla_A\phi$ to $\nabla_A^*\nabla_A\nabla_A^k\phi$.
If we denote the total curvature of the fiber bundle by $\widetilde{R}$ which involves $R_\Sigma, R_M$ and $F_A$, we have
\begin{equation}\label{e:Q1}
\mathcal Q_1^k(\nabla_A\phi, F_A) = \sum\nabla^{a}\widetilde{R}\big(\nabla_A^b \nabla_A\phi, \nabla_A^c \nabla_A\phi\big)\nabla_A^d
\nabla_A\phi,
\end{equation}
where the sum is taken over all indices $a, b, c, d\ge 0$ satisfying $a+b+c+d = k-1$. Then one can verify that
\begin{eqnarray}\label{e:Q11}
&&\Big|\sum\nabla_A^{a}\widetilde{R}\big(\nabla_A^b \nabla_A\phi, \nabla_A^c \nabla_A\phi\big)\nabla_A^d \nabla_A\phi\Big| \nonumber\\
&&\le C\sum\big|\nabla_A^{j_1} F_A\big|\cdots\big|\nabla^{j_r}_A F_A\big|\cdot\big|\nabla_A^{j_{r+1}}\nabla_A\phi\big|\cdots
\big|\nabla_A^{j_{r+s}}\nabla_A\phi\big|,
\end{eqnarray}
where $C>0$ is a constant depending on $R_\Sigma$ and $R_M$, and the indices satisfy
\begin{equation}\label{e:indices}
  0\le j_1, \cdots, j_{r+s}\le k-1; \quad j_1+\cdots+ j_{r+s} = k-1; \quad s\ge 3.
\end{equation}
Moreover, we have
\begin{equation}\label{e:h}
\big| \nabla_A^k(\nabla h(\phi))\big| \le C\sum |\nabla_A^{i_1}\phi|\cdots|\nabla_A^{i_p}\phi|
\end{equation}
where $C$ is a constant depending on $h$ and the indices $i_1,\cdots, i_p\ge 1$ satisfy $i_1+\cdots i_p = k$.

Thus multiplying (\ref{k-ymh})$_1$ by $\eta^2\nabla_A^k\phi$ and integrating on $B_2(0)$, we get
\begin{equation}\label{e:Q15}
\begin{aligned}
  &\frac{d}{dt}\int_{B_2(0)}\eta^2|\na^k\phi|^2 + \int_{B_2(0)}|\na^{k+1}\phi|^2\\
   &\le C\int_{B_2(0)}\eta^2|\na^{k}\phi|^2 + C\int_{B_2(0)}\eta^2|\mathcal Q_1^k||\na^k\phi| + C\int_{B_2(0)}\eta^2|\na^k(\nabla h(\phi))||\na^k\phi|\\
   &=: I + II + III.
\end{aligned}
\end{equation}
By (\ref{e:Q1}) and (\ref{e:Q11}), we have
\begin{equation}\label{e:Q12}
\begin{aligned}
  II \le &C\int_{B_2(0)}\eta^2|F_A|^r|\na\phi|^{s-1}|\na^k\phi|^2 + C\int_{B_2(0)}\eta^2|F_A|^{r-1}|\na\phi|^{s}|\na^{k-1}F_A||\na^k\phi|\\
  & +C\sum\int_{B_2(0)}\eta^2|\na^{j_1} F_A|\cdots|\na^{j_r} F_A|\cdot|\na^{j_{r+1}}\na\phi|\cdots|\na^{j_{r+s}}\na\phi||\na^k\phi|\\
  \le &C(\ep_0)\int_{B_2(0)}\eta^2(|\na^k\phi|^2+|\na^{k-1}F_A|^2)\\
  & +C\sum\int_{B_2(0)}\eta^2|\na^{j_1} F_A|^2\cdots|\na^{j_r} F_A|^2\cdot|\na^{j_{r+1}}\na\phi|^2\cdots|\na^{j_{r+s}}\na\phi|^2,
\end{aligned}
\end{equation}
where the sum in the last term is taken over all the indices satisfying (\ref{e:indices}) with $j_1,\cdots j_{r+s}\le k-2$.

Recall the Sobolev embedding for a section $\Psi$ in dimension 2
\begin{equation*}
\norm{\Psi}_{L^q}\le C\norm{\Psi}_{W^{1,2}} \le C\norm{\Psi}_{H^{1,2}}, ~\forall q>1
\end{equation*}
where  $\norm{\Psi}_{W^{1,2}}$ is the usual Sobolev norms of $|\Phi|$ and
\begin{equation*}
\norm{\Psi}_{H^{1,2}} := \norm{\Psi}_{L^2} + \norm{\na\Psi}_{L^2}
\end{equation*}
is the Sobolev norms with respect to $\na$. It follows that that for all $q>1, j\le k-2$, we have
\begin{equation*}
\norm{\na^{j}F_A}_{L^q}\le C\norm{\na^j F_A}_{H^{1,2}}\le C\sum_{i=1}^{k-1} \norm{\na^i F_A}_{L^2},
\end{equation*}
and
\begin{equation*}
\norm{\na^{j}\na\phi}_{L^q}\le C\norm{\na^{j+1}\phi}_{H^{1,2}} \le C\sum_{i=1}^{k-1} \norm{\na^{i+1}\phi}_{L^2}.
\end{equation*}
Applying this and the H\"older inequality in (\ref{e:Q12}), we get
\begin{equation}\label{e:Q13}
II\le C(\ep_0)\sum_{i=1}^{k-1}\left(\int_{B_2(0)}\eta^2(|\na^i F_A|^2 + |\na^{i+1}\phi|^2)\right)
\end{equation}
By (\ref{e:h}) and a similar argument, we have
\begin{equation}\label{e:Q14}
III\le C(\ep_0)\sum_{i=1}^{k-1}\left(\int_{B_2(0)}\eta^2|\na^{i+1}\phi|^2\right)
\end{equation}

Now inserting (\ref{e:Q13}) and (\ref{e:Q14}) back into (\ref{e:Q15}), we arrive at
\begin{equation*}
\frac{d}{dt}\int_{B_2(0)}\eta^2|\na^k\phi|^2 + \int_{B_2(0)}|\na^{k+1}\phi|^2 \le C(\ep_0)\sum_{i=1}^{k-1}\left(\int_{B_2(0)}\eta^2(|\na^i F_A|^2 + |\na^{i+1}\phi|^2)\right).
\end{equation*}
Consequently, integrating this inequality on $t$ as we did before and using the induction assumption, we obtain the desired bound
\begin{equation*}
\int_{t_*}^0\int_{B_2(0)}|\na^{k+1}\phi|^2 \le C(\ep_0)\sum_{i=1}^{k-1}\left(\int_{t_*}^0\int_{B_2(0)}\eta^2(|\na^i F_A|^2 + |\na^{i+1}\phi|^2)\right).
\end{equation*}

The estimate on $\na^{k}F_A$ can be achieved in the same way. Namely, $\mathcal Q_2^k$ emerges from interchanging the order of derivatives on $F_A$. Since we have
\begin{equation*}
\Big|\nabla_A^{k-1}\nabla_A^*\nabla_A F_A - \nabla_A^*\nabla_A^k F_A\Big|
\le C \sum_{i=0}^{k} \big|\nabla_A^i F_A\#\nabla_A^{k-i}F_A\big|,
\end{equation*}
it is easy to verify that the following estimate holds:
\begin{equation*}
\begin{aligned}
\Big|\mathcal Q_2^k(\nabla_A\phi, F_A)\Big|+\Big|\nabla^{k-1}_A(g(F_A,\phi))\Big|\le
& C \sum_{i=0}^{k} \left(|\nabla_A^i F_A||\nabla_A^{k-i}F_A| + |\nabla^{i+1}_A \phi||\nabla^{k-i}_A\phi|\right)\\
 &+ C\sum_{0\le j_1+j_2+j_3\le k} |\nabla_A^{j_1} F_A||\nabla_A^{j_2}\phi||\nabla_A^{j_3}\phi|.
\end{aligned}
\end{equation*}
Then multiplying the equation (\ref{k-ymh})$_2$ by $\nabla_A^{k-1}F_A$ and integrating by parts, we may obtain the desired bounds (\ref{e9}) for $l = k+1$.

Finally, (\ref{e9}) together with the parabolic Sobolev embedding theorems completes our proof.
\end{proof}

\begin{rem} {\rm  In dimension two, the assumption (\ref{e:ep-condition}) in Lemma \ref{l:ep-reg}
can be weaken to that there exists $R_0>0$ depending on $(\phi_0, A_0)$ and $\epsilon_0>0$ such that
\begin{equation}\label{e:ep-condition1}
\sup_{t_0-R_0^2\le t\le t_0}\int_{B_{R_0}(x_0)}|D_A\phi|^2\,dv_g\le \ep_0.
\end{equation}
In fact, it follows from H\"older's inequality and the energy inequality (\ref{e:energy-inequ}) that
$$\int_{B_{R_0}(x_0)}|F_A|\,dv_g\le {\rm{Vol}}(B_{R_0}(x_0))\Big(\int_{B_{R_0}(x_0)}|F_A|^2\,dv_g\Big)^\frac12
\le C \big(\mathcal E(t)\big)^\frac12R_0\le C\big(\mathcal E(0)\big)^\frac12 R_0\le \epsilon_0,$$
provided $\displaystyle R_0\le \frac{\epsilon_0}{C\sqrt{\mathcal E(0)}}$. }
\end{rem}

\section{Local weak solutions of the YMH  flow}

In this section, we will establish the local existence of
weak solutions to the initial value problem of the YMH flow equation~(\ref{e:heat}),
with any initial data $(A_0, \phi_0)\in \A_{1,2}\times \S_{1,2}$. For simplicity, we will denote, for $0<T\le +\infty$ and $1<p\le \infty, 1<q\le \infty$,
\begin{eqnarray*}
  L^p(L^q):=L^p([0,T), L^q(\Sigma)), \
  L^p(H^1):=L^p([0,T), H^1(\Sigma)), \
  H^1(L^q):=H^1([0,T), L^q(\Sigma)).
\end{eqnarray*}

Now we state the theorem on the local existence of weak solutions.
\begin{thm}\label{t:local}
Assume $(A_0, \phi_0)\in \A_{1,2}\times\S_{1,2}$,
 there exist $0<T_0\le +\infty$ and a weak solution $(A, \phi)$ to
  the YMH  flow equation~(\ref{e:heat}) and the initial condition (\ref{IVP}) on the interval $[0, T_0)$. Moreover, $(A,\phi)$ enjoys the properties that
  $F_A\in L^\infty(L^2)$, $\phi\in L^\infty(H^1)$, and there exists a gauge transformation $s\in \G_{1,2}$
  such that $(s^*A,s^*\phi)\in C^\infty\big((0,T_0), \A\times \S\big)$. Finally, if $0<T_0<+\infty$ is the maximal time interval for the weak solution,
  then
  \begin{equation}\label{local-ymh-concen}
  \limsup_{t\uparrow T_0} \max_{x\in\Sigma}\int_{B_R(x)} e(A(t), \phi(t))\,dv_g\ge \alpha(M), \ \forall\ R>0,
  \end{equation}
  where $\alpha(M)>0$ is a positive constant  given by
  \begin{equation}\label{energy-gap}
  \alpha(M):=\inf\Big\{\int_{\mathbb S^2} |\nabla h|^2\,dv_{g_0}: \ h\in C^\infty(\mathbb S^2, M)\ {\rm{is\ a\ nontrivial\ harmonic\ map}} \Big\}.
  \end{equation}
 \end{thm}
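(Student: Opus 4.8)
The plan is to construct the local weak solution by approximating the $H^1$ data $(A_0,\phi_0)$ by smooth pairs $(A_0^k,\phi_0^k)\to(A_0,\phi_0)$ in $\A_{1,2}\times\S_{1,2}$, invoking Theorem~\ref{t:smooth} to obtain smooth solutions $(A^k,\phi^k)$ on maximal intervals $[0,T_k)$, and then passing to the limit. The key is to show that the $T_k$ are bounded below by a uniform $T_0>0$ and that the solutions enjoy uniform estimates on $\Sigma\times[0,T_0)$. To this end I would first record the global energy inequality (Lemma~\ref{l:energy-inequ}) and the local energy inequality (Lemma~\ref{l:energy-inequ-local}), which together give: for the approximating data, since $(A_0,\phi_0)\in H^1$ one can choose $R_1>0$ and $T_0\sim R_1^2$ small enough that
\begin{equation*}
\sup_{0\le t<T_0}\max_{x\in\Sigma}\int_{B_{R_1}(x)}\big(|F_{A^k}|+|D_{A^k}\phi^k|^2\big)\,dv_g<\ep_0,
\end{equation*}
uniformly in $k$, where $\ep_0$ is the constant of Lemma~\ref{l:ep-reg}. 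Here one uses the remark following Lemma~\ref{l:higher-reg} to reduce the smallness condition to one on $\int|D_A\phi|^2$, controlled via absolute continuity of the integral of $|D_{A_0}\phi_0|^2$ plus the $Ct/R^2$ error term; the $|F_A|$-part is automatically small in two dimensions by H\"older. This smallness persists on $[0,T_0)$ independently of $k$, so in particular $T_k\ge T_0$.

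Next, with the $\ep$-regularity hypothesis verified on all parabolic balls $\mathbb P_{R_1}(z)$ with $z\in\Sigma\times(0,T_0)$, Lemma~\ref{l:ep-reg} gives local $L^\infty$ bounds on $|F_{A^k}|+|D_{A^k}\phi^k|^2$ interior in time, and Lemma~\ref{l:higher-reg} upgrades these to uniform $C^\infty_{loc}(\Sigma\times(0,T_0))$ bounds on all covariant derivatives, still in the original (non-gauge-fixed) formulation — these bounds of course depend on a fixed reference connection. Combined with the energy inequality, which bounds $F_{A^k}$ in $L^\infty(L^2)$, $D_{A^k}\phi^k$ in $L^\infty(L^2)$, and $\p_t A^k,\p_t\phi^k$ in $L^2(\Sigma\times[0,T_0))$, and with a Uhlenbeck-type gauge fixing (Coulomb gauge relative to $A_0$, available since curvature is $L^2$-small) to control the connections themselves in $H^1$, one extracts a subsequence converging weakly in the relevant spaces and strongly in $C^\infty_{loc}(\Sigma\times(0,T_0))$ after applying gauge transformations $s^k\in\G_{2,2}$. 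The limit $(A,\phi)$ then satisfies the weak formulation~(\ref{e:weak-sol}): the interior terms pass by smooth convergence, and one checks continuity in $C^0([0,T_0),\A_{0,2}\times\S_{0,2})$ and attainment of the initial data in $L^2$ using the uniform $\p_t$-bounds and lower semicontinuity. This yields the asserted regularity: $F_A\in L^\infty(L^2)$, $\phi\in L^\infty(H^1)$, and smoothness of $(s^*A,s^*\phi)$ on $(0,T_0)$ for a single gauge transformation $s\in\G_{1,2}$ (patching the $s^k$ in the limit).

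Finally, for the characterization of the maximal time $0<T_0<+\infty$: suppose by contradiction that for some $R>0$,
\begin{equation*}
\limsup_{t\uparrow T_0}\max_{x\in\Sigma}\int_{B_R(x)}e(A(t),\phi(t))\,dv_g<\alpha(M).
\end{equation*}
Then for $t$ close to $T_0$ the small-energy condition holds on balls of a fixed radius $R'\le R$ (again reducing to the $|D_A\phi|^2$-smallness via the remark, since $|F_A|$ is small by H\"older), so Lemma~\ref{l:higher-reg} gives uniform $C^\infty$ bounds on $\Sigma\times[T_0-\tau,T_0)$ up to gauge. One then shows $(A(t),\phi(t))$ converges in $H^1$ (up to gauge) as $t\uparrow T_0$ to a smooth limit, using the $L^2(\p_t)$-bound to get a Cauchy sequence in $L^2$ and interpolating with the higher-order bounds; restarting the flow from this limit via Theorem~\ref{t:smooth} extends the smooth solution past $T_0$, contradicting maximality. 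The main obstacle, and the part requiring real care, is the gauge-fixing step: the a priori estimates of Section~4 are stated in a fixed reference frame, but to get genuine compactness of the connections (not just of curvatures) and to identify the blow-up threshold with $\alpha(M)$ rather than some smaller constant, one must patch local Coulomb gauges into a global $H^1$ gauge on $\Sigma$ (using that $F_A$ is $L^2$-small, so Uhlenbeck's theorem applies) and track the gauge transformations' regularity ($\G_{2,2}$) carefully through the limit. The appearance of $\alpha(M)$ — the energy of the least-energy harmonic sphere into the fiber $M$ — in (\ref{local-ymh-concen}) reflects that, in two dimensions, the connection part of the energy is subcritical and cannot concentrate, so the only obstruction to extending the flow is bubbling of the Higgs field, exactly as in the harmonic map heat flow.
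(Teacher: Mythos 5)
Your construction of the local weak solution follows the paper's own route almost step for step: smooth approximation of the $H^1$ data, Theorem~\ref{t:smooth} for the approximating flows, the local energy inequality to secure a uniform small-energy radius and hence a uniform lower bound on the existence times, Lemma~\ref{l:ep-reg} and Lemma~\ref{l:higher-reg} for interior $C^\infty_{loc}$ bounds, and an Uhlenbeck gauge fixing to get compactness of the connections themselves and a single time-independent gauge in which the limit is smooth on $(0,T_0)$. (The paper fixes the Coulomb gauge at the terminal time slice $t_0$ rather than relative to $A_0$, but that is immaterial.) This part of the proposal is correct and essentially identical to Steps 1--4 of the paper's proof.

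The gap is in the characterization \eqref{local-ymh-concen} of a finite maximal time. You argue by contradiction from the hypothesis
$\limsup_{t\uparrow T_0}\max_{x}\int_{B_R(x)}e(A(t),\phi(t))\,dv_g<\alpha(M)$
and then invoke the small-energy machinery, but the $\ep$-regularity of Lemma~\ref{l:ep-reg} requires the local energy to be below $\ep_0$, a constant produced by the Moser--Harnack argument and depending only on $\Sigma$, $\A$, $\S$; it bears no relation to $\alpha(M)$, which is an invariant of the fiber $M$ and is typically much larger than $\ep_0$ (and is $+\infty$ if $M$ admits no harmonic spheres). If the concentrated amount of energy lies strictly between $\ep_0$ and $\alpha(M)$, your contradiction hypothesis holds but the small-energy condition fails on every ball around the concentration point, no matter how small you take $R'$, so you cannot conclude regularity up to $T_0$ and cannot extend the flow. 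The direct contradiction argument therefore only yields \eqref{local-ymh-concen} with $\alpha(M)$ replaced by $\ep_0/2$ (which is exactly what the paper proves in Step 5). Upgrading the threshold to $\alpha(M)$ is not a soft observation: it requires the blow-up analysis of Sections 6--7, namely rescaling at the concentration point, showing that the rescaled connections converge to the trivial flat connection (so no energy is lost into curvature or into necks), and applying the bubble-tree energy identity to conclude that the concentrated energy equals a finite sum of energies of nontrivial harmonic spheres, each of which is at least $\alpha(M)$ by definition. Your closing remark correctly identifies this mechanism, but the proposal does not supply the argument, and the argument it does supply in its place is not valid as stated.
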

\begin{proof} We divide the proof into five steps:

\smallskip
\noindent{\it Step 1: Smooth approximation of initial data.}  Since $P\times_G M$ is a smooth manifold and ${\rm{dim}}(\Sigma)=2$, it follows from Schoen-Uhlenbeck's
density Lemma (cf. \cite{SU}), a local trivialization of the bundle $\F$, and the partition of unity that there exists a sequence of smooth sections
$\phi_0^n\in \S$ such that $\phi_0^n\rightarrow \phi$ in $H^1(\Sigma)$. It is standard that
there is a sequence of smooth sections $A_{0}^n\in \A$ such that $A_{0}^n\rightarrow A_0$ in $H^1(\Sigma)$.
Thus we may assume a uniform energy bound of $(A_0^n,\phi_0^n)$:
  \begin{equation}\label{e10}
    \E_n(0): = \E\big(A_{0}^n, \phi_{0}^n\big) \le C (=1+\E(0)).
  \end{equation}
Applying Theorem~\ref{t:smooth}, we conclude that there exist $T_n>0$ and a unique smooth solution $(A_n, \phi_n)\in C^\infty([0,T_n), \A\times \S)$
to the YMH heat flow equation~(\ref{e:heat}) and the initial condition (\ref{IVP}) with $(A_0,\phi_0)$ replaced by $(A_{0}^n, \phi_{0}^n)$.
We may assume that $T_n>0$ is the maximal time interval for $(A_n,\phi_n)$.

\smallskip
\noindent{\it Step 2: A uniform lower bound of $T_n$}. Let $\ep_0$ and $R_0$ be the constants given by Lemma~\ref{l:ep-reg}.
Let $\ep_0>0$ and $R_0>0$ be the constants given by lemma~\ref{l:ep-reg}. Since $(A_{0}^n, \phi_{0}^n)\rightarrow (A_0, \phi_0)$ in $H^1(\Sigma)$,
  there exists a uniform $R_1\in (0, R_0)$ independent of $n$ such that
  \begin{equation}\label{e11}
    \E_n(0, 2R_1): =\max_{x\in\Sigma} \int_{B_{2R_1}(x)}e(A_{0}^n, \phi_{0}^n)\,dv_g \le \frac{\ep_0}{2}.
  \end{equation}
Then by Lemma~\ref{l:energy-inequ-local}, there exists $\theta_0\in (0,1)$, depending only on $\Sigma$, $\E(0)$ and $R_1$, such that
for $T_n^1=\min\big\{T_n, \theta_0 R_1^2\big\}>0$ there holds
\begin{eqnarray}\label{e12}
\E_n(t, {R_1}):=\max_{x\in\Sigma} \int_{B_{R_1}(x)}e\big(A_n(t), \phi_n(t)\big)\,dv_g
&\le& \E_n(0, 2R_1) + CT_n^1R_1^{-2}\L_n(0)\nonumber\\
&\le& \frac{\epsilon_0}2+CT_n^1R_1^{-2}\big(1+\E(0)\big)\nonumber\\
&\le& \ep_0,
\end{eqnarray}
for all $0\le t\le T_n^1$.

Now we want to show $T_n\ge \theta_0 R_1^2$. Suppose, otherwise,  $T_n<\theta_0R_1^2$. Then we have
$T_n=T_n^1<\theta_0R_1^2$ so that (\ref{e12}) holds for $0\le t\le T_n$. Applying Lemma \ref{l:ep-reg}
and Lemma \ref{l:higher-reg}, we can conclude that for any $0<\delta<T_n$ it holds
\begin{equation}\label{e:uniform-est}
\sup_n\max_{(x,t)\in\Sigma\times [\delta_n, T_n)} \Big(\big|\nabla^{l-1}F_{A_n}\big|+\big|\nabla^l_{A_n}\phi_n\big|\Big)
\le C(l,R_1,\epsilon_0,\delta), \ \forall\ l\ \in\mathbb Z^+.
\end{equation}
Applying (\ref{e:uniform-est}) (with $\delta=\frac{T_n}2$) and taking $l$-order derivatives $\nabla_A^l$ to the equation (\ref{e:heat})
for $l\ge 1$, it is not hard to see that for any $l\in \mathbb Z^+$, $\big(A_n(t), \phi_n(t)\big)$ is uniformly bounded
in $C^l(\Sigma)$ for $0\le t<T_n$. Therefore
there exists $(A_n(T_n), \phi_n(T_n))\in \A\times\S$ such that
$$\lim_{t\uparrow T_n}\big(A_n(t),\phi_n(t)\big)=\big(A_n(T_n), \phi_n(T_n)\big)
\ {\rm{in}}\ C^l(\Sigma), \ \forall \ l\in \mathbb Z^+.$$
This contradicts the maximality of $T_n$. Thus we must have that
$T_n\ge \theta_0 R_1^2$ for all $n\ge 1$.

\smallskip
\noindent{\it Step 3: Weak convergence of $(A_n,\phi_n)$ in $\Sigma\times [0,\theta_0R_1^2]$.}
From the energy inequality (\ref{e:energy-inequ}), we have
  \begin{equation}\label{e:t-energy-bd}
 \sup_{0\le t\le\theta_0R_1^2}\E_n(t)+ \int_0^{\theta_0R_1^2}\int_\Sigma \Big(\big|\frac{\partial\phi_n}{\partial t}\big|^2+\big|{\frac{\partial A_n}{\partial t}}\big|^2\Big)\,dv_gdt
   \le \E_n(0) \le C.
   \end{equation}
This and direct calculations imply that $\big\|A_n(t)\big\|_{L^2(\Sigma)}\in H^1\big([0,\theta_0R_1^2]\big)
\subset C^\frac12\big([0,\theta_0R_1^2]\big)$ and satisfies the estimate: for any $0\le t_1\le t_2\le\theta_0R_1^2$,
\begin{equation} \label{e:holder-est}
\Big| \big\|A_n(t_1)\big\|_{L^2(\Sigma)}-\big\|A_n(t_2)\big\|_{L^2(\Sigma)}\Big|\le \Big\|\frac{\partial A_n}{\partial t}\Big\|_{L^2(\Sigma\times
[0,\theta_0R_1^2])} |t_1-t_2|^\frac12\le C|t_1-t_2|^\frac12.
 \end{equation}
By (\ref{e10}), (\ref{e:holder-est}), and (\ref{e:t-energy-bd}), we also have that $\phi_n$ is bounded in $L^2([0,\theta_0R_1^2], \S_{1,2})$,
i.e.,
\begin{equation} \label{e:spat-energy-bd}
 \sup_{n}\big\|\phi_n\big\|_{L^2([0,\theta_0R_1^2], H^1(\Sigma))}\le C\E_n(0)\le C.
 \end{equation}
 It follows from (\ref{e:t-energy-bd}), (\ref{e:holder-est}), and (\ref{e:spat-energy-bd}) that we may assume that
 there exist a connection $A\in L^\infty([0, \theta_0R_1^2], \A_{0,2})$ and $\phi\in L^2([0,\theta_0R_1^2], \S_{1,2})$
 such that after passing to a subsequence,
 \begin{equation}\label{weak1}
 A_n\rightharpoonup A  \ {\rm{weak^*\ in}}\ L^\infty(L^2); \ \phi_n\rightharpoonup \phi \ {\rm{in}}\ L^2(H^1) \ {\rm{and}}\
 \phi_n\rightarrow \phi \ {\rm{in}}\  L^2(L^2).
 \end{equation}
 From (\ref{e10}), we may assume that there exist $F\in L^\infty([0,\theta_0R_1^2], L^2(\Sigma))$,
 $\psi\in L^\infty([0,\theta_0R_1^2], L^2(\Sigma))$ such that after passing to a subsequence,
 \begin{equation}\label{weak2}
 F_{A_n}\rightharpoonup F; \ D_{A_n}\phi_n\rightharpoonup \psi\  \ {\rm{in}}\ L^2(L^2).
 \end{equation}

\smallskip
\noindent{\it Step 4: Uhlenbeck's gauge fixings and smooth convergence of $(A_n,\phi_n)$.}
In order to show $(A,\phi)$ is a weak solution of (\ref{e:heat}) and (\ref{IVP}), we need to identify $F$ and $\psi$
with $F_A$ and $D_A\phi$ respectively.  To achieve this, we need to control the connections
$A_n$ through Uhlenbeck's gauge fixing techniques. To do it,  we adopt the argument by \cite{HTY} Lemma 3.7.
First note that $(A_n, \phi_n)$ satisfies the smallness condition:
\begin{equation}\label{e:uniform-small}
\max_{0\le t\le \theta_0R_1^2}\E_n(t, R_1)\le\epsilon_0.
\end{equation}
Hence, by Lemma \ref{l:ep-reg} and Lemma \ref{l:higher-reg}, there exists $C>0$ such that for any
$n$, $(A_n, \phi_n)$ satisfies,
for any $k\ge 0$,
\begin{equation}\label{e:higher-est1}
\big\|\nabla^k_{A_n}F_{A_n}(t)\big\|_{L^\infty(\Sigma)}\le \frac{C\epsilon_0}{t^k},
\  \big\|\nabla^{k+1}_{A_n}\phi_n(t)\big\|_{L^\infty(\Sigma)}\le \frac{C\epsilon_0}{t^{\frac{k}2}},
\ \forall\ 0<t\le\theta_0R_1^2.
\end{equation}
Set $t_0=\theta_0R_1^2$. Using (\ref{e:higher-est1}) with $k=0$, we can apply Uhlenbeck's gauge fixing (see \cite{Uh}
Theorem 1.5) to obtain a sequence of Coulomb gauge transforms $s_n\in \G_{1,2}$ such that
$\widetilde{A}_n(t_0)=s_n^*(A_n)(t_0)\in \A$  and satisfies
\begin{equation}\label{slice-uniform-est}
\Big\|\widetilde{A}_n(t_0)\Big\|_{C^k(\Sigma)}\le C\Big(\|A_n(t_0)\|_{L^2(\Sigma)}+\sum_{l=0}^{k-1}\big\|\nabla^l_{A_n}F_{A_n}(t_0)\big\|_{L^\infty(\Sigma)}\Big)\le C(k, t_0, \epsilon_0),
\ \forall\  k\ge 1.
\end{equation}
Set $\widetilde{\phi}_n=s_n^*(\phi_n)$. Since (\ref{e:heat}) is invariant under time-independent gauge transforms,
$(\widetilde{A}_n,\widetilde{\phi}_n)$ is also a solution of (\ref{e:heat}) in
 $[0, t_0]$. Set
 $$\widetilde{\mathcal E}_n(t, R_1):=\max_{x\in\Sigma}\int_{B_{R_1}(x)} e(\widetilde{A}_n(t), \widetilde{\phi}_n(t))\,dv_g.$$
 Then, since the local Yang-Mills-Higgs energy $\E_n(t,R_1)$ is invariant under gauge transformations,
 (\ref{e:uniform-small}) holds for $(\widetilde{A}_n,\widetilde{\phi}_n)$, i.e.,
 $$
\max_{0\le t\le \theta_0R_1^2}\widetilde{\mathcal E}_n(t, R_1)=
\max_{0\le t\le \theta_0R_1^2}\E_n(t, R_1)\le\epsilon_0.
 $$
 so that by Lemma \ref{l:ep-reg}
 and Lemma \ref{l:higher-reg},  (\ref{e:higher-est1}) holds for $(\widetilde{A}_n,\widetilde{\phi}_n)$, i.e.,
 \begin{equation}\label{e:higher-est2}
\big\|\nabla^k_{\widetilde{A}_n}F_{\widetilde{A}_n}(t)\big\|_{L^\infty(\Sigma)}\le \frac{C\epsilon_0}{t^k},
\  \big\|\nabla^{k+1}_{\widetilde{A}_n}\widetilde{\phi}_n(t)\big\|_{L^\infty(\Sigma)}\le \frac{C\epsilon_0}{t^{\frac{k}2}},
\ \forall\ 0<t\le\theta_0R_1^2.
\end{equation}
 Now we take $\partial_t^k\nabla_{\widetilde{A}_n}^l$ of both sides of the equation (\ref{e:heat}) for
 $(\widetilde{A}_n, \widetilde{\phi}_n)$ for any $k,l\ge 1$, and apply (\ref{slice-uniform-est}) and
 (\ref{e:higher-est2}) to conclude that
 $(\widetilde{A}_n, \widetilde{\phi}_n)\in C^\infty(\Sigma\times (0,t_0])$ satisfies
 \begin{equation}\label{e:uniform-est2}
\sup_{n} \Big\|\big(\widetilde{A}_n, \widetilde{\phi}_n\big)\Big\|_{C^k(\Sigma\times [\delta, t_0])}
 \le C(k, \delta, t_0,\epsilon_0), \ \forall\ k\ge 0, \ \forall\ 0<\delta<t_0.
 \end{equation}
From (\ref{e:uniform-est2}), we may assume there exists $\big(\widetilde{A}, \widetilde{\phi}\big)\in C^\infty((0, t_0], \A\times \S)$ such
that  after passing to a subsequence, $(\widetilde{A}_n, \widetilde{\phi}_n)\rightarrow (\widetilde{A},\widetilde{\phi})$ in
$C^k(\Sigma\times [\delta, t_0])$ for any $k\ge 1$ and $0<\delta<t_0$. Since $\big(\widetilde{A}_n,\widetilde{\phi}_n\big)$
are smooth solutions of (\ref{e:heat}) on $\Sigma\times (0,t_0]$, it follows that $\big(\widetilde{A},\widetilde{\phi}\big)$ is also a smooth
solution of (\ref{e:heat}).

Since $\displaystyle\widetilde{A}_n(t_0)=s_n^*(A_n)(t_0)=s_n^{-1} ds_n+s_n^{-1} A_n(t_0)s_n$,  it follows from (\ref{weak1}) and
(\ref{slice-uniform-est}) that
\begin{equation}\label{h1-bound}
\|ds_n\|_{L^2(\Sigma)}\le C\big(\|\widetilde{A}_n(t_0)\|_{L^2(\Sigma)}+\|A_n(t_0)\|_{L^2(\Sigma)}\big)\le C.
\end{equation}
Thus we may assume there exists $s\in \G_{1,2}$ such that after passing to a subquence,
$s_n\rightharpoonup s \ {\rm{in}}\ H^1(\Sigma)$ and $s_n\rightarrow s$ in $L^2(\Sigma)$.
This, combined with (\ref{weak1}), implies that $\big(\widetilde{A},\widetilde{\phi}\big)=\big(s^*A, s^*\phi\big)$ or
equivalently $(A,\phi)=\Big((s^{-1})^*\widetilde{A}, (s^{-1})^*\widetilde\phi\Big)$ in $\Sigma\times (0,t_0]$. Since
(\ref{e:heat}) is invariant under time-independent gauge transformations, we conclude that $(F,\psi)=(F_A, D_A\phi)$,
and $(A,\phi)$ is a weak
solution of (\ref{e:heat})  in $\Sigma\times [0,t_0]$ and satisfies (\ref{IVP}).

\smallskip
\noindent{\it Step 5: Characterization of a finite maximal time interval $T_0$}.
Let $\epsilon_0>0$ be given by Lemma \ref{l:ep-reg}.
First we claim if $0<T_0<+\infty$ is the maximal time interval for a weak solution $(A,\phi)$ constructed through step 1 to step 4, then
 \begin{equation}\label{local-ymh-concen1}
  \limsup_{t\uparrow T_0} \max_{x\in\Sigma}\int_{B_R(x)} e(A(t), \phi(t))\,dv_g\ge \frac{\epsilon_0}2,\ \forall\ R>0.
  \end{equation}
For, otherwise, there exists $R_0>0$ such that for any sufficiently small $\delta>0$,
$$\E(T_0-\delta^2, 2R_0):=\max_{x\in\Sigma}\int_{B_{2R_0}(x)\times \{T_0-\delta^2\}} e(A(t), \phi(t))\,dv_g<\frac{\epsilon_0}2.$$
This, combined with lemma \ref{l:energy-inequ-local}, implies that there exists $\theta_0\in (0,1)$ such that for $T_0-\delta^2\le t\le T_0-\delta^2+(\theta_0 R_0)^2$,
$$\E(t, R_0)\le \E(T_0-\delta^2, 2R_0)+\frac{C(t-(T_0-\delta^2))}{R_0^2}\E(0)\le \frac{\epsilon_0}2+ C\theta_0^2 \E(0)\le \epsilon_0.$$
By choosing $\delta=\theta_0R_0$, this implies that
\begin{equation}\label{nonsing-time}
\sup_{T_0-\theta_0^2R_0^2\le t\le T_0}\E(t,R_0)\le\epsilon_0.
\end{equation}
From (\ref{nonsing-time}), we can repeat the argument from step 1 to step 4 to conclude that there exists a $s\in\G_{1,2}$
such that $\big(s^*A(T_0), s^*\phi(T_0)\big)\in \A\times \S$ and hence $T_0$ is not a maximal time interval. This contradicts the definition of $T_0$.
The improvement of $\frac{\epsilon_0}2$ in (\ref{local-ymh-concen1}) to $\alpha(M)$ in (\ref{local-ymh-concen}) follows from the blow-up analysis
performed near $T_0$, see section 6 below.
\end{proof}

For the uniqueness of weak solutions, we have

\begin{thm}
  If the initial data belongs to $W^{2,p}$ for $p>2$, then the weak solution is unique.
\end{thm}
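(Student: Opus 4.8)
The plan is to exploit the extra regularity of the data through the DeTurck trick already used for Theorem~\ref{t:smooth}. Since $\dim\Sigma=2$ we have $W^{2,p}(\Sigma)\hookrightarrow C^{1,\alpha}(\Sigma)$ for $p>2$, so $F_{A_0}$ and $D_{A_0}\phi_0$ are bounded and the small-energy hypothesis \eqref{e:ep-condition} of Lemma~\ref{l:ep-reg} holds on every sufficiently small parabolic cylinder based at $t=0$. The first step is a \emph{regularity upgrade}: on some short interval $[0,T]$ I would show that any weak solution $(A,\phi)$ in the class of Theorem~\ref{t:local} is in fact a strong solution, with $A\in C^0([0,T],\A_{1,p})$, $\phi\in C^0([0,T],\S_{2,p})$ and $\partial_t A,\partial_t\phi\in L^p(\Sigma\times[0,T])$; in particular $A,\nabla A,F_A$ and $D_A\phi$ all lie in $L^\infty(\Sigma\times[0,T])$. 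This follows by combining the interior smoothness for $t>0$ supplied by Lemma~\ref{l:ep-reg} with parabolic $L^p$-estimates for the DeTurck-gauged system \eqref{e:heat-DeTurck1} near $t=0$, the $C^{1,\alpha}$-bound on the data controlling the quasilinear coefficients and the critical nonlinearity; alternatively one compares with the smooth solutions of Theorem~\ref{t:smooth} produced from mollified data.

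Given this regularity, I would reverse the DeTurck construction exactly as in Section~3. For two weak solutions $(A_1,\phi_1)$, $(A_2,\phi_2)$ with the same data, solve the parabolic equation \eqref{e:gauge-DeTurck1} for gauge transformations $S_i(t)$ with $S_i(0)=\mathrm{id}$ (well posed in the parabolic Sobolev class $W^{2,1}_p$ because $a_i=A_i-A_0$ now has the regularity from Step~1), and set $(\bar A_i,\bar\phi_i):=(S_i^{-1})^*(A_i,\phi_i)$; as computed in Section~3, $(\bar A_i,\bar\phi_i)$ solves the \emph{strictly parabolic} quasilinear system \eqref{e:heat-DeTurck1} with data $(0,\phi_0)$, and \eqref{e:gauge-DeTurck1} for $(A_i,\phi_i)$ coincides with the ODE \eqref{e:gauge-DeTurck} for $(\bar A_i,\bar\phi_i)$. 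Uniqueness for \eqref{e:heat-DeTurck1} in this class is then a routine energy estimate: embedding $M$ isometrically into some $\mathbb R^N$ to give meaning to differences of sections and writing $\delta\bar a=\bar a_1-\bar a_2$, $\delta\bar\phi=\bar\phi_1-\bar\phi_2$, one tests the difference of the two copies of \eqref{e:heat-DeTurck1} against $(\delta\bar a,\delta\bar\phi)$ and integrates by parts to get
\begin{align*}
&\frac{d}{dt}\big(\|\delta\bar a\|_{L^2}^2+\|\delta\bar\phi\|_{L^2}^2\big)+c\big(\|\nabla\delta\bar a\|_{L^2}^2+\|\nabla\delta\bar\phi\|_{L^2}^2\big)\\
&\qquad\le C\big(1+M(t)\big)\big(\|\delta\bar a\|_{L^2}^2+\|\delta\bar\phi\|_{L^2}^2\big),
\end{align*}
where the ellipticity of $\overline\Delta,\overline\Delta'$ supplies the good gradient term, the quadratic term $\overline D^*(\bar a\wedge\bar a)$ and the critical term $\bar\phi^*\overline D\bar\phi$ are controlled via Ladyzhenskaya's inequality $\|f\|_{L^4}^2\le C\|f\|_{L^2}\|f\|_{H^1}$ and Young's inequality to absorb the gradient, and $M(t)$ denotes the bounded (by Step~1) $L^\infty$-norms of $A_i,\nabla A_i,F_{A_i},D_{A_i}\phi_i$. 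As $\delta\bar a(0)=0$, $\delta\bar\phi(0)=0$, Gronwall forces $(\bar a_1,\bar\phi_1)\equiv(\bar a_2,\bar\phi_2)$ on $[0,T]$.

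Then ODE uniqueness applied to \eqref{e:gauge-DeTurck} yields $S_1=S_2$, hence $(A_1,\phi_1)=(A_2,\phi_2)$ on $[0,T]$; since for $t>0$ the solution is smooth up to a time-independent gauge by Theorem~\ref{t:local}, this local uniqueness propagates to the whole maximal interval by a standard open--closed argument resting on the uniqueness of smooth solutions from Theorem~\ref{t:smooth}. The \textbf{main obstacle} is Step~1: the $A$-equation in \eqref{e:heat} is only gauge-invariant, not parabolic, so a direct energy estimate on $\|A_1-A_2\|_{L^2}^2$ fails --- the leading term controls merely $\|D_A(A_1-A_2)\|_{L^2}$, which cannot absorb the first-order error terms --- and this is exactly what forces the detour through the DeTurck gauge; carrying the $W^{2,p}$-regularity all the way down to $t=0$ so that \eqref{e:gauge-DeTurck1} is well posed is the delicate point, with a secondary point being the continuation of uniqueness past the finitely many singular times, where the restart datum is only $H^1$ and one must instead invoke Lemma~\ref{l:ep-reg} to ensure smoothness on both sides.
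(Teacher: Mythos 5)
Your proposal is correct and follows essentially the same route as the paper: the paper's own (two-sentence) proof simply observes that $W^{2,p}$-data with $p>2$ makes the coefficients of the gauge ODE \eqref{e:gauge-DeTurck} continuous and then invokes the uniqueness argument of Theorem~\ref{t:smooth}, i.e.\ the reverse DeTurck construction, uniqueness for the strictly parabolic system \eqref{e:heat-DeTurck1}, and ODE uniqueness for the gauge transformations. You fill in the details the paper leaves implicit (the regularity upgrade near $t=0$ and the Gronwall estimate for the gauged system), but the underlying mechanism is identical.
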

\begin{proof}
  The proof follows exactly as the smooth case in Theorem~\ref{t:smooth}. Note that when $A\in W^{2,p}, p>2$, the coefficients of equation~(\ref{e:gauge-DeTurck}) belong to $C^0$ and the uniqueness of solutions to the ordinary differential equation
is guaranteed. Thus the arguments in the proof of uniqueness part of Theorem~\ref{t:smooth} holds the same in this case.
\end{proof}

\section{Compactness of approximate YMH fields}

In \cite{So1}, the first author discussed in detail the convergence and blow-up behavior of a sequence of YMH fields. It was shown that a sequence of YMH fields with bounded YMH energy converges to a YMH field along with
possibly finitely many bubbles, i.e., nontrivial harmonic maps from $\mathbb S^2$, which are attached to the limiting YMH field. This phenomenon, referred as {\it bubble tree} convergence,  has first been established in the study of compactness of harmonic maps from Riemann surfaces, see for example \cite{P1}.

Extending the arguments in \cite{So1}, we can prove the bubble tree convergence of a sequence of approximate YMH fields, which is needed to describe the asymptotic behavior of the YMH heat flow at both finite singular times and the time infinity. More precisely, for a pair $(A, \phi)\in \A_{1,2}\times \S_{1,2}$, set
\[ \tau_1(A,\phi) = D_A^*F_A+\phi^*D_A\phi;\  ~ \tau_2(A, \phi) = D^*_AD_A\phi+(\mu(\phi)-c)\cdot\nabla\mu(\phi). \]
We will show the bubble tree convergence of a sequence $(A_n, \phi_n)\in \A_{1,2}\times \S_{1,2}$, with bounded YMH energies $\E(A_n,\phi_n)$,  satisfying
\begin{equation}\label{e:bounded-tension}
\norm{\tau_1(A_n,\phi_n)}_{L^2(\Sigma)} + \norm{\tau_2(A_n, \phi_n)}_{L^2(\Sigma)} \le C.
\end{equation}

\begin{thm}\label{t:compact}
Suppose $(A_n, \phi_n)\in \A_{1,2}\times \S_{1,2}$ is a sequence of fields, with bounded YMH energies $\E(A_n,\phi_n)$, which satisfies~(\ref{e:bounded-tension}).
Then there exist a subsequence of $(A_n,\phi_n)$, still denoted as $(A_n, \phi_n)$, a set of finitely many points $\mathbf{x} = \{x_1, x_2,\cdots, x_k\}\subset \Si$,
 and an approximate YMH field $(A_\infty, \phi_\infty)\in \A_{2,2}\times \S_{2,2}$, with $L^2$-tension fields
 $\tau_1(A_\infty, \phi_\infty)$ and $\tau_2(A_\infty, \phi_\infty)$, such that the following properties hold:
 \begin{enumerate}
 \item There exist gauge transformations $\{s_n\}\subset\G_{1,2}$ such that $s_n^*A_n\rightarrow A_\infty$ in $H^1(\Sigma)$ and
$s_n^*\phi_n\rightarrow\phi_\infty$ in $H^1_{loc}(\Si\setminus\mathbf{x})$.
\item There exist finitely many nontrivial harmonic maps $\omega_{ij}:\mathbb S^2\to M, 1\le i\le k, 1\le j\le l$,  such that
\[\lim_{n\to \infty} \E(A_n, \phi_n) = \E(A_\infty, \phi_\infty) + \sum_{1\le i\le k,1\le j\le l}\E(\omega_{ij}),\]
where $\displaystyle \E(\omega_{ij}) = \int_{\mathbb S^2}\big|\nabla\omega_{ij}\big|^2dv_{g_0}$ is the Dirichlet energy of $\omega_{ij}$.
\item The images of the bubbles $\displaystyle \{\omega_{ij}\}_{1\le i\le k, 1\le j\le l}$ and the limiting map $\phi_\infty$ are connected.
\end{enumerate}
\end{thm}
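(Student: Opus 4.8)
The plan is to establish Theorem~\ref{t:compact} by a now-standard combination of Uhlenbeck gauge fixing, $\epsilon$-regularity, and bubble-tree induction on the energy, adapting the arguments of \cite{So1} (which treat genuine YMH fields) to the present setting where the tension fields are merely $L^2$-bounded rather than zero. The key point is that the $\epsilon$-regularity estimate of Lemma~\ref{l:ep-reg} has an elliptic analogue: a pair $(A,\phi)$ with $L^2$-tension fields satisfying $\E(t,B_R(x))\le\epsilon_0$ enjoys interior $W^{2,2}$ (indeed $C^\infty$ after gauge fixing) bounds, with constants depending only on the tension bound. This is exactly the static version of the parabolic estimates already derived in Section~4, so I will invoke it freely.

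\medskip
\noindent\emph{Step 1: gauge fixing away from concentration points and weak limit.}
Since $\sup_n\E(A_n,\phi_n)\le C$, define the concentration set
\[
\mathbf{x}=\Big\{x\in\Sigma:\ \liminf_{n\to\infty}\,\E\big(A_n,\phi_n;B_r(x)\big)\ge\tfrac{\epsilon_0}{2}\ \ \text{for all }r>0\Big\}.
\]
A covering argument shows $\#\mathbf{x}\le 2\E(0)/\epsilon_0<\infty$. On any compact $K\Subset\Sigma\setminus\mathbf{x}$ one has, for $n$ large and $r=r(K)$ small, the smallness $\E(A_n,\phi_n;B_r(x))\le\epsilon_0$ for every $x\in K$; hence by the elliptic $\epsilon$-regularity together with the higher-order estimates of Lemma~\ref{l:higher-reg} (static form) and Uhlenbeck's gauge fixing theorem (\cite{Uh}, Theorem~1.5), there exist gauge transformations $s_n\in\G_{2,2}$ with $s_n^*A_n$ bounded in $C^k_{loc}(\Sigma\setminus\mathbf{x})$ for all $k$. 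After patching local Coulomb gauges by the standard argument and passing to a subsequence, $s_n^*A_n\rightharpoonup A_\infty$ in $H^1(\Sigma)$ and $s_n^*\phi_n\to\phi_\infty$ in $C^\infty_{loc}(\Sigma\setminus\mathbf{x})$, with $(A_\infty,\phi_\infty)\in\A_{2,2}\times\S_{2,2}$ an approximate YMH field whose tension fields are the weak $L^2$-limits of $\tau_i(A_n,\phi_n)$. This proves item~(1).

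\medskip
\noindent\emph{Step 2: energy identity via blow-up at each concentration point.}
Fix $x_i\in\mathbf{x}$ and a small ball $B_\rho(x_i)$ with $B_\rho(x_i)\cap\mathbf{x}=\{x_i\}$. Choose $\lambda_n\to0$ and $y_n\to x_i$ so that the rescaled pairs $\big(\lambda_n A_n(y_n+\lambda_n\cdot),\,\phi_n(y_n+\lambda_n\cdot)\big)$ realize the maximal energy concentration $\epsilon_0/2$ on a unit ball; in the blow-up limit the connection part scales away ($|F|$ picks up a factor $\lambda_n^2$) and $\phi_n$ converges, after gauge fixing, to a nonconstant harmonic map $\omega_{i1}:\Real^2\to M$ with finite energy, which extends to $\omega_{i1}:\mathbb S^2\to M$ by the removable singularity theorem. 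The no-neck property — that no energy is lost in the annular regions between scales — follows from the Pohozaev identity plus the $L^2$ tension bound, exactly as in the harmonic-map case (\cite{P1}) and in \cite{So1}; the extra curvature terms are lower order by the $\lambda_n^2$ scaling and contribute nothing in the limit. Iterating this blow-up at each point and each scale, the procedure terminates after finitely many steps because each bubble carries energy $\ge\alpha(M)>0$, yielding finitely many harmonic spheres $\omega_{ij}$ and the energy identity of item~(2).

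\medskip
\noindent\emph{Step 3: connectedness of images.}
Item~(3) is a consequence of the no-neck analysis: in each neck region the oscillation of $\phi_n$ tends to zero (this is the geometric content of "no neck"), so the image of each bubble is attached either to the image of $\phi_\infty$ or to the image of a bubble at a coarser scale; chasing the finite bubble tree gives that $\phi_\infty(\Sigma)\cup\bigcup_{i,j}\omega_{ij}(\mathbb S^2)$ is connected. The main obstacle is Step~2, specifically the no-neck estimate: one must show that the $L^2$ norm of $\tau_i(A_n,\phi_n)$ — which need not vanish — still controls the angular energy in the necks well enough to run the Pohozaev argument. This is handled by the by-now-standard three-annulus lemma combined with the observation that on a neck of conformal modulus $T$ the tension contributes at most $C\|\tau\|_{L^2}$ times a geometric factor that is summable over dyadic annuli; the details, which parallel \cite{So1} and the approximate-harmonic-map literature, are carried out in the next section.
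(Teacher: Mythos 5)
Your overall strategy --- define the concentration set, gauge-fix, blow up at each point, and run a no-neck argument --- is a genuinely different route from the paper's, which instead makes a short reduction to the known bubble-tree theorem for approximate harmonic maps with $L^2$-bounded tension (Ding--Tian, Lin--Wang; Theorem~\ref{t:bubble-convergence-harmonic}). The paper's key observation is that the hypothesis~(\ref{e:bounded-tension}) gives $\|D_{A_n}^*F_{A_n}\|_{L^2}\le\|\tau_1(A_n,\phi_n)\|_{L^2}+C\|D_{A_n}\phi_n\|_{L^2}\le C$, so the elliptic estimate in Coulomb gauge (Lemma~\ref{l:reg}, globalized in Proposition~\ref{l:global-gauge-fix}) bounds $s_n^*A_n$ in $H^2(\Sigma)$ \emph{globally}, with no smallness hypothesis and no exceptional set for the connection. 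This buys two things at once: strong $H^1(\Sigma)$ convergence of $s_n^*A_n$ by Rellich, and the fact that in any local trivialization $s_n^*\phi_n$ becomes literally an approximate harmonic map with uniformly $L^2$-bounded tension (the connection terms in~(\ref{e:u-equation}) are controlled by the $H^2$ bound), so all of your Steps~2 and~3 are absorbed into a citation.

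Measured against the statement, your proposal has two concrete gaps. First, item~(1) asserts $s_n^*A_n\rightarrow A_\infty$ \emph{strongly} in $H^1(\Sigma)$, but your Step~1 only delivers weak $H^1$ convergence plus $C^\infty_{loc}$ bounds away from $\mathbf{x}$; that combination does not preclude concentration of $|\nabla A_n|^2$ at the points of $\mathbf{x}$, and your $\epsilon$-regularity-based gauge fixing cannot reach those points. You never use the $L^2$ bound on $D_{A_n}^*F_{A_n}$ for the connection, which is exactly the ingredient that rules this out. Second, you acknowledge that the no-neck estimate is ``the main obstacle'' and defer it to a ``next section'' that does not exist; since the energy identity and the connectedness of images both rest entirely on it, the proof is incomplete at its decisive step. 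The estimate you need is true and is precisely the content of the approximate-harmonic-map literature the paper cites, but a blind re-derivation via Pohozaev/three-annulus arguments would also have to track the connection terms in the necks, which your sketch waves away by asserting that ``$|F|$ picks up a factor $\lambda_n^2$'' --- a claim that itself requires the strong convergence of the connection you have not yet established (compare the argument around~(\ref{noconcentration-F}) in the paper, which is needed in the parabolic setting where no global $H^2$ gauge is available).
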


Theorem \ref{t:compact}  has been proved by \cite{So1} for YMH fields $(A_n,\phi_n)$, i.e,
$\tau_1(A_n,\phi_n)=\tau_2(A_n,\phi_n)=0$.  It turns out the argument in \cite{So1} can be modified
to prove Theorem \ref{t:compact} on approximate YMH fields satisfying the condition (\ref{e:bounded-tension}).
Here we sketch a proof. First we need the following lemma.

\begin{lem}\label{l:reg} There exists $\epsilon_0>0$ such that for $x_0\in\Sigma$ and $r_0>0$ if $A \in \A_{1,2}\big |_{B_r(x_0)}$
is a connection on the ball $ B_{r_0}(x_0)\subset\Sigma$, with $D_A^*F_A\in L^2(B_{r_0}(x_0))$, which satisfies
\begin{equation}\label{small-curvature}
r_0\big\|F_A\big\|_{L^2(B_{r_0}(x_0))}\le\epsilon_0,
\end{equation}
then there exists a gauge transformation $s\in \G_{2,2}$ such that the following estimate holds:
\begin{equation}\label{h2-estimate}
  \norm{s^*A}_{H^2(B_{\frac{r_0}2}(x_0))} \le C(r_0)(\norm{F_A}_{L^2(B_{r_0}(x_0))} + \norm{D_A^*F_A}_{L^2(B_{r_0}(x_0))}).
\end{equation}
\end{lem}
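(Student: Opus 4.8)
The plan is to follow the standard Uhlenbeck gauge-fixing strategy combined with elliptic regularity bootstrap for the Coulomb gauge. First I would invoke Uhlenbeck's theorem (\cite{Uh}, Theorem 1.3 and Theorem 2.1) on the ball $B_{r_0}(x_0)$: provided the smallness condition \eqref{small-curvature} holds with $\epsilon_0$ chosen as Uhlenbeck's constant, there is a gauge transformation $s\in\G_{2,2}$ (after restriction, possibly to $B_{r_0/2}(x_0)$ first and then a further shrinking, though one can arrange it on a fixed fraction of the ball) such that $\widetilde A := s^*A$ satisfies the Coulomb gauge condition $d^*\widetilde A=0$ in $B_{r_0}(x_0)$, $\widetilde A_\nu = 0$ on $\partial B_{r_0}(x_0)$, together with the estimate $\|\widetilde A\|_{W^{1,2}(B_{r_0}(x_0))}\le C(r_0)\|F_A\|_{L^2(B_{r_0}(x_0))}$. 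By the conformal invariance considerations in dimension two and a scaling argument, one may reduce to the case $r_0=1$, proving $\|s^*A\|_{H^2(B_{1/2})}\le C(\|F_A\|_{L^2(B_1)}+\|D_A^*F_A\|_{L^2(B_1)})$ and then restoring the $r_0$-dependence at the end; this keeps the constant dependence on $r_0$ explicit as in \eqref{h2-estimate}.

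Next I would set up the second-order elliptic system for $\widetilde A$. In the Coulomb gauge $d^*\widetilde A=0$, the curvature becomes $F_{\widetilde A}=d\widetilde A+\widetilde A\wedge\widetilde A$, so $d\widetilde A=F_{\widetilde A}-\widetilde A\wedge\widetilde A$, and combining with $d^*\widetilde A=0$ gives the Hodge system $(d+d^*)\widetilde A = F_{\widetilde A}-\widetilde A\wedge\widetilde A$ modulo boundary terms, hence $\widetilde A\in W^{1,p}$ for all $p<\infty$ by Sobolev embedding $W^{1,2}\hookrightarrow L^q$ ($q<\infty$) in dimension two applied to the quadratic term $\widetilde A\wedge\widetilde A\in L^p$ for $p<\infty$, elliptic $L^p$ estimates for $d+d^*$, and in particular $\widetilde A\in C^0$. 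For the $H^2$ estimate I would then use the gauge-transformed equation $D_{\widetilde A}^*F_{\widetilde A} = s^*(D_A^*F_A)\in L^2$. Writing $D_{\widetilde A}^*F_{\widetilde A} = d^*F_{\widetilde A}+\widetilde A\# F_{\widetilde A}$ and $F_{\widetilde A}=d\widetilde A+\widetilde A\#\widetilde A$, together with $d^*\widetilde A=0$, one obtains $\Delta\widetilde A = -d^*(\widetilde A\#\widetilde A) - \widetilde A\# F_{\widetilde A} + s^*(D_A^*F_A)$ up to harmonic/boundary contributions. The right-hand side lies in $L^2$: the term $s^*(D_A^*F_A)$ is in $L^2$ by hypothesis (gauge transformations are isometries on the relevant norms), $\widetilde A\# F_{\widetilde A}\in L^2$ since $\widetilde A\in L^\infty$ and $F_{\widetilde A}\in L^2$, and $d^*(\widetilde A\#\widetilde A)$ is handled by first noting $\widetilde A\#\widetilde A\in W^{1,p}$ for $p<2$... more carefully, one bootstraps: $\widetilde A\in W^{1,q}$ for all $q<\infty$ gives $\widetilde A\#\widetilde A\in W^{1,q}$, so $d^*(\widetilde A\#\widetilde A)\in L^q$, and elliptic estimates with Neumann-type boundary condition on $B_{r_0/2}(x_0)$ yield $\widetilde A\in W^{2,2}(B_{r_0/2}(x_0))$ with the quantitative bound \eqref{h2-estimate}.

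The main obstacle I anticipate is the careful bookkeeping of the nonlinear (cubic and quadratic) terms and the boundary conditions when bootstrapping, together with maintaining the precise dependence of constants on $r_0$ rather than letting them blow up. In two dimensions the borderline nature of $W^{1,2}$ — it just fails to embed in $L^\infty$ — means one cannot directly estimate $\widetilde A\#\widetilde A$ in $L^2$ with the Sobolev norm; the resolution is the intermediate step producing $\widetilde A\in W^{1,q}$ for every finite $q$ (hence $\widetilde A\in C^{0,\alpha}$) from the first-order Hodge system, which is exactly where Uhlenbeck's small-curvature hypothesis is used to close the fixed-point/continuity argument producing the Coulomb gauge. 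A secondary technical point is that Uhlenbeck's theorem as usually stated produces $W^{1,2}$ control; upgrading the gauge transformation $s$ itself to $\G_{2,2}$ requires noting that once $\widetilde A\in W^{1,q}$ and $A\in W^{1,2}$, the relation $\widetilde A = s^{-1}ds + s^{-1}As$ forces $ds\in W^{1,p}$ for suitable $p$, so $s\in W^{2,p}\hookrightarrow W^{2,2}$ locally. I would present these bootstrap steps compactly, citing \cite{Uh} for the gauge existence and standard elliptic $L^p$ theory for the iteration, and only the final $H^2$-estimate \eqref{h2-estimate} needs the hypothesis $D_A^*F_A\in L^2$.
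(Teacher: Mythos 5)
Your overall architecture (rescale to $r_0=1$, Uhlenbeck Coulomb gauge on $B_1$, elliptic bootstrap to continuity and then to $W^{2,2}$ on a smaller ball, upgrade $s$ to $\G_{2,2}$ at the end) matches the paper's proof. However, there is a genuine gap in the step where you produce continuity of $\widetilde A$. You claim that the first-order Hodge system $d^*\widetilde A=0$, $d\widetilde A=F_{\widetilde A}-\widetilde A\wedge\widetilde A$ yields $\widetilde A\in W^{1,p}$ for all $p<\infty$, hence $\widetilde A\in C^0$, using only that the quadratic term lies in every $L^p$. This ignores the term $F_{\widetilde A}$ on the right-hand side, which under the hypotheses is only in $L^2$; the elliptic estimate for $d+d^*$ therefore returns $\widetilde A\in W^{1,2}$, which is exactly what Uhlenbeck's theorem already gave and which does not embed into $C^0$ (nor into any $W^{1,p}$ with $p>2$) in dimension two. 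Consequently your later assertions that $\widetilde A\in L^\infty$ (used to place $\widetilde A\# F_{\widetilde A}$ in $L^2$) and that $\widetilde A\#\widetilde A\in W^{1,q}$ are unsupported, and your closing remark that ``only the final $H^2$-estimate needs the hypothesis $D_A^*F_A\in L^2$'' is false: without some control of $D_A^*F_A$, a Coulomb-gauge $H^1$ connection with merely $L^2$ curvature need not be continuous.

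The hypothesis $D_A^*F_A\in L^2$ has to enter already at the continuity stage. The paper does this by passing immediately to the second-order equation $\Delta\widetilde A+[\widetilde A,d\widetilde A]+[\widetilde A,[\widetilde A,\widetilde A]]=D_{\widetilde A}^*F_{\widetilde A}$: since $\widetilde A\in L^q$ for all $q<\infty$ and $d\widetilde A\in L^2$, the commutator terms lie in $L^p$ for every $1<p<2$, so the $W^{2,p}$ elliptic estimate gives $\widetilde A\in W^{2,p}(B_{3/4})$ for such $p$, and $W^{2,p}\hookrightarrow C^0$ in two dimensions precisely because $2p>2$; only then does a second application of elliptic theory yield the $W^{2,2}(B_{1/2})$ bound. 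Alternatively, you could repair your first-order route by first upgrading $F_{\widetilde A}$ itself: in dimension two $dF_{\widetilde A}$ vanishes identically, and $d^*F_{\widetilde A}=D_{\widetilde A}^*F_{\widetilde A}-\widetilde A\# F_{\widetilde A}\in L^p$ for $p<2$, so $F_{\widetilde A}\in W^{1,p}\hookrightarrow L^q$ for all $q<\infty$, after which your Hodge-system bootstrap for $\widetilde A$ does close. Either repair uses $D_A^*F_A\in L^2$ well before the final $H^2$ step.
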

\begin{proof} If we define $\widehat{A}(x)=r_0A(x_0+r_0x)$ and $\widehat{g}(x)=g(x_0+r_0x)$ for $x\in B_1$, then
by simple scaling arguments we have that $\widehat{A} \in \A_{1,2} \big|_{B_1}$, with $D_{\widehat A}^*F_{\widehat{A}}\in L^2(B_1)$,
and satisfies
$$\big\|D_{\widehat A}^*F_{\widehat{A}}\big\|_{L^2(B_1)}
=r_0^2 \big\|D_{A}^*F_A\big\|_{L^2(B_{r_0}(x_0)}\le \big\|D_{A}^*F_A\big\|_{L^2(B_{r_0}(x_0)},$$
and
$$\int_{B_1} |F_{\widehat{A}}|^2\,dv_{g_0}
=\int_{B_{r_0}(x_0)} r_0^2|F_{A}|^2\,dv_g<\epsilon_0^2.$$
Thus we may assume, for simplicity, that $x_0=0$ and $r_0=1$.
Since $A\in\A_{1,2}$ satisfies (\ref{small-curvature}), it follows from \cite{Uh} Theorem 1.3
that there exists a gauge transform $s\in \G_{1,2}$ such that $\widetilde A=s^*A$ satisfies
\begin{equation}\label{coulomb-gauge1}
d^*\widetilde A=0, \ \big\|\widetilde A\big\|_{H^1(B_1)}\le C\big\|F_A\big\|_{L^2(B_1)}.
\end{equation}
Using $d^*\widetilde A=0$, we obtain the following elliptic equation:
\begin{equation}\label{e:ep1}
\Delta \widetilde A + \big[\widetilde A, d\widetilde A\big] + \big[\widetilde A, [\widetilde A,\widetilde A]\big]
=D_{\widetilde A}^*F_{\widetilde A}.
\end{equation}
By Sobolev's embedding $H^1(B_1) \hookrightarrow L^q(B_1)$ for any $1<q<+\infty$, and the standard $W^{2,p}$-estimate
on (\ref{e:ep1}), we have that  for any $1<p<2$, $\widetilde A\in W^{2,p}(B_{\frac34})$ and
$$\big\|\widetilde A\big\|_{W^{2,p}(B_{\frac34})}\le C\Big(\big\|\widetilde A\big\|_{H^1(B_1)}
+\big\|D_{\widetilde A}^*F_{\widetilde A}\big\|_{L^2(B_1)}\Big)\le C\big(\norm{F_A}_{L^2(B_1)} + \norm{D_A^*F_A}_{L^2(B_1)}\big).
$$
This, combined with Sobolev's embedding $W^{2,p}(B_{\frac34}) \hookrightarrow C^0(B_{\frac34})$,
yields $\widetilde A\in C^0(B_{\frac34})$ and
$$
\big\|\widetilde A\big\|_{L^\infty(B_{\frac34})}\le C\big(\norm{F_A}_{L^2(B_1)} + \norm{D_A^*F_A}_{L^2(B_1)}\big).
$$
Now we can apply $W^{2,2}$-estimate to (\ref{e:ep1}) to obtain that $\widetilde A\in W^{2,2}(B_{\frac12})$ satisfies the desired
estimate (\ref{h2-estimate}). This, after scaling back to the original scale, completes the proof.
\end{proof}

Following the gluing procedure given by \cite{Uh} and Lemma \ref{l:reg}, we obtain the following proposition.
\begin{prop} \label{l:global-gauge-fix} For any $\Lambda>0$, there exists $C(\Lambda)>0$ such that
if $A\in \A_{1,2}$, with $D_A^*F_A\in L^2(\Sigma)$, has
\begin{equation}
\|F_A\|_{L^2(\Sigma)}\le \Lambda,
\end{equation}
then there is a gauge transform $s\in \G_{2,2}$
such that $\widetilde A=s^*A\in \A_{2,2}$ satisfies
\begin{equation}\label{h2-estimate3}
\big\|\widetilde A\big\|_{H^2(\Sigma)}\le C(\Lambda)\Big(\big\|F_A\big\|_{L^2(\Sigma)}+\big\|D_A^*F_A\big\|_{L^2(\Sigma)}\Big).
\end{equation}
\end{prop}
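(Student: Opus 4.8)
The plan is to globalize the local Coulomb gauge estimate of Lemma~\ref{l:reg} by a covering argument combined with Uhlenbeck's patching procedure, exactly in the spirit of \cite{Uh}. First I would fix $\Lambda>0$ and choose a finite cover of $\Sigma$ by geodesic balls $\{B_{r_0}(x_\alpha)\}_{\alpha=1}^N$ with $r_0$ small enough (depending only on $\Sigma$ and $\Lambda$) that $r_0\|F_A\|_{L^2(B_{2r_0}(x_\alpha))}\le\epsilon_0$ on each ball, where $\epsilon_0$ is the constant from Lemma~\ref{l:reg}; this is possible because the total curvature is bounded by $\Lambda$, so only finitely many balls can carry more than a fixed fraction of the energy, and those can be handled by shrinking $r_0$ further on a covering that overlaps with controlled multiplicity. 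On each ball Lemma~\ref{l:reg} produces a local gauge transformation $s_\alpha\in\G_{2,2}$ on $B_{r_0}(x_\alpha)$ with $\|s_\alpha^*A\|_{H^2(B_{r_0/2}(x_\alpha))}\le C(r_0)\big(\|F_A\|_{L^2}+\|D_A^*F_A\|_{L^2}\big)$, so in particular a uniform bound on each patch.

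Next I would carry out the patching. On overlaps $B_{r_0/2}(x_\alpha)\cap B_{r_0/2}(x_\beta)$ the transition functions $g_{\alpha\beta}=s_\alpha s_\beta^{-1}$ satisfy $g_{\alpha\beta}^{-1}dg_{\alpha\beta}=s_\alpha^*A-g_{\alpha\beta}^{-1}(s_\beta^*A)g_{\alpha\beta}$, hence are bounded in $H^2$ on the overlaps, and one inductively modifies the local gauges (as in \cite{Uh}, Section~3, the ``broken gauge'' construction) to produce a single global gauge transformation $s\in\G_{2,2}$ with $\widetilde A=s^*A\in\A_{2,2}$; because there are only $N=N(\Lambda)$ patches, all constants stay controlled by $\Lambda$. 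The resulting connection $\widetilde A$ then satisfies, on each coordinate ball of a fixed good cover, an estimate of the form $\|\widetilde A\|_{H^2(B_{r_0/4}(x_\alpha))}\le C(\Lambda)\big(\|F_A\|_{L^2(\Sigma)}+\|D_A^*F_A\|_{L^2(\Sigma)}\big)$, and summing over the finite cover and using a partition of unity subordinate to it yields the global bound \eqref{h2-estimate3}. Note that $D_A^*F_A$ is gauge-covariant, so $\|D_{\widetilde A}^*F_{\widetilde A}\|_{L^2(\Sigma)}=\|D_A^*F_A\|_{L^2(\Sigma)}$, which is what lets the right-hand side be expressed in terms of the original $A$.

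The main obstacle is the patching step rather than the local estimate: one must ensure that the inductive gluing of the finitely many local Coulomb gauges does not lose control of the $H^2$-norm of the transition functions, and that the number of patches (hence the number of gluing steps) is bounded purely in terms of $\Lambda$ and the geometry of $\Sigma$. This is handled exactly as in Uhlenbeck's original argument \cite{Uh}: one orders the patches, and at each stage the modification of the gauge on the next patch involves solving a Neumann-type problem whose data is controlled by the already-established bounds, so the estimates propagate with a constant depending only on the combinatorics of the cover. Since the cover depends only on $\Lambda$ and $\Sigma$, the final constant $C(\Lambda)$ is legitimate. The only point requiring mild care is that a priori $s\in\G_{1,2}$, but the $H^2$-bound on $\widetilde A$ together with $\widetilde A=s^{-1}ds+s^{-1}A_0 s$ (with $A_0$ the reference connection) bootstraps $s$ into $\G_{2,2}$, which is the regularity claimed.
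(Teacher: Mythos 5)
Your proposal follows essentially the same route as the paper: a finite cover by balls of radius $r_0$ depending only on $\Lambda$ and $\Sigma$, the local Coulomb gauge estimate of Lemma~\ref{l:reg} on each ball, Uhlenbeck's patching of the local gauges into a global $s$, summation of the local $H^2$-bounds, and the final bootstrap of $s$ into $\G_{2,2}$ via $ds=sA-\widetilde A s$. The only remark worth making is that your justification for the choice of $r_0$ is more elaborate than needed (and, as stated, would make the cover depend on $A$): since $\|F_A\|_{L^2(B)}\le\|F_A\|_{L^2(\Sigma)}\le\Lambda$ for every ball $B$, simply taking $r_0\le\epsilon_0/\Lambda$ gives the smallness condition uniformly, which is what the paper does.
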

\begin{proof} Choose $0<r_0\le \frac{\epsilon_0}{\Lambda}$, we have
$$r_0\big\|F_A\big\|_{L^2(B_{r_0}(x_0))}\le \epsilon_0$$
holds for any $x_0\in\Sigma$.
Since $\Sigma$ is compact, there exist a positive integer $k_0 \le Cr_0^{-2}$ and points $\{x_1,\cdots, x_{k_0}\}\subset\Sigma$
such that $\Sigma$ is covered by $\{B_{\frac{r_0}2}(x_i)\}_{1\le i\le k_0}$.
Applying Lemma \ref{l:reg} on each $B_{r_0}(x_i)$, $1\le i\le k_0$, and using the gluing technique by \cite{Uh}, we can find a
gauge transform $s\in \G_{1,2}$ such that $\widetilde A=s^*A\in \A_{2,2}$ satisfies, for $1\le i\le k_0$,
\begin{equation}\label{h2-estimate4}
\big\|\widetilde A\big\|_{H^2(B_{\frac{r_0}2}(x_i))}^2\le C(r_0)\Big(\big\|F_A\big\|_{L^2(B_{r_0}(x_i))}^2
+\big\|D_A^*F_A\big\|_{L^2(B_{r_0}(x_i))}^2\Big).
\end{equation}
It is clear that (\ref{h2-estimate3}) follows by summing up (\ref{h2-estimate4}) over $1\le i\le k_0$.
Since $ds=s A-\widetilde A s$, it is easy to see that $s\in \G_{2,2}$. This completes the proof.
\end{proof}

Before presenting the proof of Theorem \ref{t:compact}, we recall the well-known bubble-tree convergence result of a sequence of approximate harmonic maps,
 with bounded Dirichlet energies and $L^2$-tension fields, which was proved in~\cite{DT} and~\cite{LinW}.

\begin{thm}\label{t:bubble-convergence-harmonic}
Let $\{u_n\}\subset H^2(B_1,M)$ have uniformly bounded Dirichlet energies $\E(u_n, B_1)$ and have their tension fields
$\tau(u_n):=\Delta_g u_n+\Pi(u_n)(du_n, du_n)$ uniformly bounded in $L^2(B_1)$.
Then there exist an approximate harmonic map $u_\infty\in H^2(B_1,M)$, with tension field $\tau(u_\infty)\in L^2(B_1)$,
 such that after passing to a subsequence, $u_n$ bubble tree converges to $u_\infty$.
More precisely, there exist finitely many points $\mathbf{x} = \{x_1, x_2,\cdots, x_k\}\subset B_1$ such that the following properties hold:
\begin{enumerate}
\item $u_n$ converges to $u_\infty$ strongly in $H^1_{loc}(B_1\setminus\mathbf{x})$.
\item There exist finitely many bubbles (i.e. nontrivial harmonic maps) $\omega_{ij}:\mathbb S^2\to M, 1\le i\le k, 1\le j\le l$, such that
\[\lim_{n\to \infty} \E(u_n, B_r) = \E(u_\infty, B_r) + \sum_{1\le i\le k,1\le j\le l}\E(\omega_{ij}),\]
where $0<r<1$ is such that ${\bf x}\subset B_r$.
\item The images of the bubbles $\displaystyle\{\omega_{ij}\}_{1\le i\le k, 1\le j\le l}$ and the limiting map $u_\infty$ are connected.
\end{enumerate}
\end{thm}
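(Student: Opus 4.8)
The plan is to run the standard bubble-tree induction for harmonic maps, extended to approximate harmonic maps with $L^2$ tension by Ding--Tian~\cite{DT} and Lin--Wang~\cite{LinW}, keeping careful track of the tension term under every rescaling. The first ingredient is $\ep$-regularity: there is $\ep_0>0$ so that $\int_{B_{2r}(x)}|du_n|^2\,dv_g\le\ep_0$ implies $\Norm{u_n}_{W^{2,2}(B_r(x))}\le C\big(\Norm{du_n}_{L^2(B_{2r}(x))}+\Norm{\tau(u_n)}_{L^2(B_{2r}(x))}\big)$, with higher local bounds following by bootstrapping the elliptic system $\Delta_g u_n=-\Pi(u_n)(du_n,du_n)+\tau(u_n)$ together with the two-dimensional Sobolev embedding $H^1\hookrightarrow L^q$. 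Setting $\mathbf{x}:=\{x\in B_1:\liminf_{n\to\infty}\int_{B_r(x)}|du_n|^2\,dv_g\ge\ep_0\ \text{for all }r>0\}$, the uniform energy bound forces $\#\mathbf{x}\le C\ep_0^{-1}\sup_n\E(u_n,B_1)<\infty$. Off $\mathbf{x}$ the $\ep$-regularity estimates are uniform, so after passing to a subsequence $u_n\to u_\infty$ strongly in $H^1_{loc}(B_1\setminus\mathbf{x})$ (and smoothly away from $\mathbf{x}$), where $u_\infty$ solves the same equation with $\tau(u_\infty)\in L^2$; the uniform $H^1(B_1)$ bound places the weak limit in $H^1(B_1)$, and the removable-singularity theorem upgrades it to $u_\infty\in H^2(B_1)$. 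This gives property (1).

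Next I would blow up at each $x_i\in\mathbf{x}$: choose $x_n^i\to x_i$ and scales $\ld_n^i\downarrow0$ by the usual point-scale selection, so that a fixed amount of energy concentrates near the origin of the rescaled picture and no more escapes to a fixed scale, and set $v_n(y):=u_n(x_n^i+\ld_n^i y)$. Here the conformal invariance of the Dirichlet energy in dimension two is decisive: $\Norm{dv_n}_{L^2}$ is scale-invariant, while the rescaled tension $\ti\tau_n(y)=(\ld_n^i)^2\,\tau(u_n)(x_n^i+\ld_n^i y)$ satisfies $\Norm{\ti\tau_n}_{L^2(\Real^2)}=\ld_n^i\,\Norm{\tau(u_n)}_{L^2}\to0$. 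Hence $v_n$ converges, again away from at most finitely many new concentration points, to a finite-energy harmonic map $\Real^2\to M$ which extends across $\infty$ to a nontrivial harmonic map $\om:\mathbb S^2\to M$ with $\E(\om)\ge\alpha(M)$. Iterating on each newly produced concentration point (the ``bubble-on-bubble'' induction), and using that the total energy is finite while each bubble carries energy at least $\alpha(M)$, the process terminates after finitely many steps, producing the finite family $\{\om_{ij}\}$ together with the connectedness relations of the bubble tree.

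The main obstacle is the energy identity (2), i.e. showing that no energy is lost in the neck regions joining $u_\infty$ to the bubbles and bubbles to their sub-bubbles. By the neck decomposition the whole question reduces to showing $\int_{A_n}|du_n|^2\,dv_g\to0$ on annuli $A_n=B_{R_n}(x_n^i)\setminus B_{r_n}(x_n^i)$ with $r_n\to0$, $R_n\to0$, $r_n/R_n\to0$, on which $\sup_{y}\int_{B_\rho(y)\cap A_n}|du_n|^2<\ep_0$ for every small $\rho$. On such an annulus the Pohozaev identity for the equation produces a boundary identity up to a defect term $\int_{A_n}\<\tau(u_n),\,r\,\p_r u_n\>\,dv_g$, which by Cauchy--Schwarz is $\le R_n\,\Norm{\tau(u_n)}_{L^2}\Norm{du_n}_{L^2}\to0$ since $R_n\to0$ and both norms are bounded; this makes the \emph{radial} energy on $A_n$ negligible. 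The \emph{angular} energy is then controlled by decomposing $A_n$ into dyadic sub-annuli and running an ODE (Gronwall/Hardy) argument — equivalently a three-annulus lemma — forcing a geometric decay of the angular energy away from the two ends, so $\int_{A_n}|du_n|^2\to0$. The same dyadic estimate yields $\operatorname{osc}_{A_n}u_n\to0$, so each neck collapses to a point and the images of the $\om_{ij}$ are attached to that of $u_\infty$, giving property (3). I expect this uniform-in-$n$ angular-energy decay on the necks, carried out in the presence of the $L^2$ tension term, to be the one genuinely delicate step; the rest is a routine transcription of the harmonic-map case.
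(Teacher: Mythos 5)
Your argument is correct and is precisely the standard Ding--Tian/Lin--Wang bubble-tree scheme; note that the paper itself does not prove this theorem but quotes it as a known result from \cite{DT} and \cite{LinW}, so your sketch reconstructs the proof that the paper outsources to those references. Two minor wording issues, neither a gap: with tension only bounded in $L^2$ the off-$\mathbf{x}$ convergence is uniform in $W^{2,2}_{loc}$ (hence strong in $H^1_{loc}$ by Rellich) but not smooth, so the parenthetical claim of smooth convergence should be dropped; and the Pohozaev identity alone does not make the radial neck energy negligible --- it only equates radial with angular energy up to the $O\big(R_n\norm{\tau(u_n)}_{L^2}\norm{du_n}_{L^2}\big)$ defect, and it is the dyadic decay of the angular energy, which you correctly invoke afterwards, that forces the total neck energy (and oscillation) to vanish.
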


Now we are ready to prove  Theorem \ref{t:compact}.

\begin{proof}[Proof of Theorem~\ref{t:compact}]
Since $A_n$ has uniformly bounded $L^2$-curvatures,  Proposition \ref{l:global-gauge-fix} implies
that there exists
a sequence of gauge transformations $\{s_n\}\subset \G_{2,2}$ such that $\widetilde{A}_n:=s_n^* A_n\in \A_{2,2}$ satisfies
\begin{eqnarray}\label{h2-estimate1}
\big\|\widetilde{A}_n\big\|_{H^2(\Sigma)}&\le& C\Big(\big\|F_{A_n}\big\|_{L^2(\Sigma)}+\big\|D_{A_n}^*F_{A_n}\big\|_{L^2(\Sigma)}\Big)\nonumber\\
&\le& C\big(\|F_{A_n}\|_{L^2(\Sigma)}+\|\tau_1(A_n,\phi_n)\|_{L^2(\Sigma)}+\|D_{A_n}\phi_n\|_{L^2(\Sigma)}\big)\le C.
\end{eqnarray}
Therefore there exists $\widetilde A\in \A_{2,2}$ such that $\widetilde A_n\rightarrow \widetilde A$ strongly in $H^1(\Sigma)$.
Set $\widetilde{\phi}_n=s_n^*(\phi_n)$. It follows that $\widetilde\phi_n\in \S_{2,2}$. Now we claim that on each local chart
$U$ of $\Sigma$,  $\widetilde\phi_n$ can be viewed as  a sequence of approximate harmonic maps from $(U, g)$ to $M$, with
uniformly bounded $L^2$-tension fields $\tau(\widetilde\phi_n)$. In fact, since $\F\big|_U$ is a trivial bundle, we can
identify $\widetilde \phi_n$ on $U$ as a map $u_n$  from $U$ to $M$.
To write the equation of $u_n$, we isometrically embed $M$ into some Euclidean space $\mathbb R^L$ and let
$\Pi$ denote its second fundamental form. Write $D_{\widetilde{A}_n}=d+\widetilde{A}_n$ and
$D_{\widetilde{A}_n}^*=d^*+\widetilde{A}_n^*$. It is easy to see that $\widetilde{A}_n^*=-\widetilde{A}_n$. Hence we
have, on $U$,
\begin{eqnarray}\label{e:u-equation}
&&\tau(u_n):=\Delta_g u_n+\Pi(u_n)(du_n, du_n)\nonumber\\
&&=\tau_2(\widetilde{A}_n, \widetilde{\phi}_n)-(\mu(\widetilde\phi_n)-c)\nabla\mu(\widetilde\phi_n)
-d^*\widetilde{A}_n \cdot u_n-2\widetilde{A}_n\cdot du_n -\widetilde{A}_n^2\cdot u_n.
\end{eqnarray}
Here $\Delta_g$ is the Laplace operator with respect to the metric $g$ in $U$.
Applying (\ref{h2-estimate1}) to (\ref{e:u-equation}), we obtain that
\begin{eqnarray*}
&&\norm{\tau(u_n)}_{L^2(U)} \\
&&\le C\Big[\norm{\tau_1(A_n,\phi_n)}_{L^2(U)} + \norm{\tau_2(\widetilde{A}_n, \widetilde{\phi}_n)}_{L^2(U)}
+ \norm{F_{A_n}}_{L^2(U)} + \norm{D_{\widetilde A_n} \widetilde{\phi_n}}_{L^2(U)}\Big] \\
&&\le C\Big[\norm{\tau_1(A_n,\phi_n)}_{L^2(U)} + \norm{\tau_2({A}_n, {\phi}_n)}_{L^2(U)}
+ \norm{F_{A_n}}_{L^2(U)} + \norm{D_{A_n} {\phi_n}}_{L^2(U)}\Big]\\
&&\le C.
\end{eqnarray*}
This implies that $\tau(u_n)$ is uniformly bounded in $L^2(U)$. Thus we may apply Theorem~\ref{t:bubble-convergence-harmonic} on each local chart
$U$ to conclude that $\widetilde\phi_n$ {\it bubble-tree} converges to a limit section $\phi_\infty\in \S_{2,2}$, namely
the properties 2 and 3 in Theorem 6.1 hold.  This completes the proof.
\end{proof}

\section{Asymptotic behavior of YMH flow at finite time singularities and time infinity}

In this section, we will discuss the asymptotic behaviors of the global weak solution $(A,\phi)$ to the YHM  flow equation
(\ref{e:heat}) under the initial condition (\ref{IVP}), established in Theorem 5.1, at each possible finite singular time $T_i$, $1\le i\le L$, and at time
infinity. The main results of this section are consequences of the compactness properties on
approximate YMH fields obtained in the previous section.

\begin{thm}\label{t:finite-sing} For $(A_0,\phi_0)\in \A_{1,2}\times \S_{1,2}$, assume that $0<T_1<+\infty$ is the maximal time interval for the
local weak solution $(A, \phi)$ to the YMH  flow equation (\ref{e:heat}), under the initial condition (\ref{IVP}), constructed
by Theorem 5.1. Then the following properties hold:
\begin{enumerate}
\item There exists a pair $(A(T),\phi(T))\in \A_{0,2}\times\S_{1,2}$
such that $(A(t),\phi(t))\rightarrow (A(T),\phi(T))$ in $L^2(\Sigma)$, and $\phi(t)\rightharpoonup\phi(T)$
 in $H^1(\Sigma)$, as $t\rightarrow T^-$.
\item There exist a set of finitely many points $\mathbf{x} := \{x_1, \cdots, x_k\}\subset \Sigma$, with $k\le \frac{\L(0)}{\epsilon_0}$,
 and a gauge transformation $s\in \G_{1,2}$ such that $(s^*A(t), s^*\phi(t))$ converges to $(s^*A(T), s^*\phi(T))$
 in $C^\infty_{loc}(\Si\setminus\mathbf{x})$, as $t\rightarrow T^-$.
\item There exist finitely many nontrivial harmonic maps $\omega_{ij}: \mathbb S^2 \to M, 1\le i \le k, 1\le j\le l$ such that
\begin{equation}
  \lim_{t\to T^-} \E(A(t), \phi(t)) = \E(A(T), \phi(T)) + \sum_{1\le i\le k, 1\le j\le l}\E(\omega_{ij}).
\end{equation}
\end{enumerate}
\end{thm}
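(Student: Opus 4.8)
The plan is to realize the limit $t\to T^-$ as an instance of the compactness theorem for approximate YMH fields (Theorem~\ref{t:compact}) applied along a suitable sequence of times, and then upgrade the conclusions from a subsequence to the full limit using the energy monotonicity and the local regularity theory of Section~4. First I would extract a sequence $t_n\uparrow T$ realizing the liminf of $\E(A(t),\phi(t))$; since $\frac{\partial A}{\partial t},\frac{\partial\phi}{\partial t}\in L^2(\Sigma\times[0,T))$ by the energy inequality (\ref{e:energy-inequ}), one may choose $t_n$ so that in addition $\|\frac{\partial A}{\partial t}(t_n)\|_{L^2}+\|\frac{\partial\phi}{\partial t}(t_n)\|_{L^2}\to 0$. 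By the flow equations~(\ref{e:heat}) this means precisely that $\|\tau_1(A(t_n),\phi(t_n))\|_{L^2}+\|\tau_2(A(t_n),\phi(t_n))\|_{L^2}\to 0$, so $(A(t_n),\phi(t_n))$ is a sequence of approximate YMH fields with uniformly bounded YMH energy (by the energy inequality $\E(t_n)\le\E(0)$) satisfying hypothesis (\ref{e:bounded-tension}). Theorem~\ref{t:compact} then furnishes a finite set $\mathbf{x}=\{x_1,\dots,x_k\}$, gauge transformations $s_n\in\G_{2,2}$, a limiting YMH field $(A_\infty,\phi_\infty)$, and bubbles $\omega_{ij}$ with $s_n^*A(t_n)\to A_\infty$ in $H^1$, $s_n^*\phi(t_n)\to\phi_\infty$ in $H^1_{loc}(\Sigma\setminus\mathbf{x})$, together with the energy identity $\lim_n\E(A(t_n),\phi(t_n))=\E(A_\infty,\phi_\infty)+\sum\E(\omega_{ij})$. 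The bound $k\le\frac{\L(0)}{\epsilon_0}$ comes from the $\epsilon$-regularity threshold: at each $x_i$ at least $\epsilon_0$ of energy concentrates, and the total is at most $\E(0)$.

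Next I would identify $(A_\infty,\phi_\infty)$ with the $L^2$/weak-$H^1$ limit of $(A(t),\phi(t))$ as $t\to T^-$ asserted in part (1). The existence of the $L^2$-limit $(A(T),\phi(T))$ follows from the Hölder-in-time estimate of the type (\ref{e:holder-est}): since $\frac{\partial A}{\partial t},\frac{\partial\phi}{\partial t}\in L^2(\Sigma\times[0,T))$, both $t\mapsto A(t)$ and $t\mapsto\phi(t)$ extend continuously to $t=T$ in $L^2(\Sigma)$; the weak $H^1$-convergence of $\phi(t)$ is then automatic from the uniform bound $\|\phi(t)\|_{H^1}\le C$ coming from $\E(t)\le\E(0)$. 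After applying the gauge transformations $s_n$ (which, as in Step~4 of the proof of Theorem~\ref{t:local}, have $\|ds_n\|_{L^2}$ uniformly bounded via (\ref{h1-bound}), so $s_n\rightharpoonup s$ in $H^1$ along a subsequence), one sets $s:=\lim s_n$ and checks that $(s^*A(T),s^*\phi(T))=(A_\infty,\phi_\infty)$, so in particular $s^*A(T)\in\A_{0,2}$ and $s^*\phi(T)\in\S_{1,2}$ with $s\in\G_{1,2}$; this gives part (1) and puts the limit in gauge.

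Then I would prove the \emph{full} smooth convergence on compact subsets of $\Sigma\setminus\mathbf{x}$ claimed in part (2), which is what promotes the statement from "along $t_n$" to "as $t\to T^-$". The key is a no-concentration argument: for any $x_0\notin\mathbf{x}$ there is $R>0$ so that $\limsup_{t\uparrow T}\int_{B_R(x_0)}e(A(t),\phi(t))\,dv_g<\epsilon_0$. Granting this, for $t$ close enough to $T$ the smallness hypothesis (\ref{e:ep-condition}) of Lemma~\ref{l:ep-reg} holds on parabolic cylinders near $(x_0,T)$, so by Lemma~\ref{l:ep-reg} and Lemma~\ref{l:higher-reg} (after gauge fixing by a fixed $s$ as above, using the argument of Steps~4--5 of Theorem~\ref{t:local}) one gets uniform $C^k$-bounds on $(s^*A(t),s^*\phi(t))$ on a neighborhood of $x_0$ for $t\in[T-\delta,T)$; the flow equation then forces $(s^*A(t),s^*\phi(t))$ to converge in $C^\infty$ as $t\to T^-$, and the limit must be $(s^*A(T),s^*\phi(T))$. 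To establish the no-concentration claim one combines the local energy inequality (Lemma~\ref{l:energy-inequ-local}) with the fact that if $\epsilon_0$ of energy persisted in $B_R(x_0)$ up to time $T$ then, by the blow-up analysis already used in Step~5 of Theorem~\ref{t:local}, $x_0$ would be one of the finitely many concentration points — contradicting $x_0\notin\mathbf{x}$; more precisely, one shows the set of points where $\limsup_{t\uparrow T}\E(t,B_R(x))\ge\epsilon_0$ for all $R>0$ is exactly $\mathbf{x}$.

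Finally I would establish the energy identity in part (3). On $\Sigma\setminus\mathbf{x}$ we have $s^*\phi(t)\to s^*\phi(T)$ in $H^1_{loc}$, hence $D_A\phi(t)\to D_A\phi(T)$ in $L^2_{loc}$, and similarly $F_{A}(t)\to F_{A}(T)$ in $L^2_{loc}$ by part (2); also $\mu(\phi(t))\to\mu(\phi(T))$ in $L^2(\Sigma)$ by the $L^2$-convergence of $\phi(t)$ and the smoothness of $\mu$. Therefore the energy lost in the limit is concentrated in arbitrarily small balls around the points of $\mathbf{x}$, namely $\lim_{t\uparrow T}\E(t)-\E(T)=\lim_{\rho\to 0}\lim_{t\uparrow T}\sum_i\E(t,B_\rho(x_i))$. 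Now along the sequence $t_n$ this lost energy is accounted for by the bubbles $\omega_{ij}$ via the energy identity of Theorem~\ref{t:compact}. The main obstacle — and the point requiring the most care — is exactly here: one must show the energy lost in the limit does not depend on the choice of sequence $t_n$, i.e. the full limit $\lim_{t\uparrow T}\E(t)$ exists and equals $\liminf$. I expect this to follow from the monotonicity $t\mapsto\E(t)$ being nonincreasing (Lemma~\ref{l:energy-inequ}), so that $\lim_{t\uparrow T}\E(t)$ exists outright, combined with the observation that the limiting YMH field $(A(T),\phi(T))$ is \emph{already} determined by part (1) independently of $t_n$, so $\E(A(T),\phi(T))$ is sequence-independent and hence so is $\sum\E(\omega_{ij})=\lim_{t\uparrow T}\E(t)-\E(A(T),\phi(T))$. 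The finiteness and the bounds $k,l\le\frac{\L(0)}{\epsilon_0}$ follow as before from the $\epsilon_0$-quantization of energy at each concentration point and of Dirichlet energy of nontrivial harmonic $2$-spheres (the constant $\alpha(M)$, which is $\ge\epsilon_0$ up to the normalization used).
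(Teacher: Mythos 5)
Your treatment of parts (1) and (2) matches the paper's: the $L^2$-limit at $t=T$ comes from $\frac{\partial A}{\partial t},\frac{\partial \phi}{\partial t}\in L^2(\Sigma\times[0,T))$ via the H\"older-in-time estimate, and smooth convergence away from the finite concentration set $\mathbf{x}$ follows from the local energy inequality together with Lemma \ref{l:ep-reg} and Lemma \ref{l:higher-reg}. The genuine gap is in part (3), at the very first step. You select $t_n\uparrow T$ with $\|\frac{\partial A}{\partial t}(t_n)\|_{L^2}+\|\frac{\partial\phi}{\partial t}(t_n)\|_{L^2}\to 0$, citing square-integrability of the time derivatives over $\Sigma\times[0,T)$. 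That inference is valid when $T=+\infty$ (and is exactly how the paper proves Theorem \ref{t:infinity-sing}), but it fails when $T<+\infty$: setting $g(t):=\int_\Sigma\big(|\frac{\partial A}{\partial t}|^2+|\frac{\partial\phi}{\partial t}|^2\big)(t)\,dv_g$, the finiteness of $\int_0^T g\,dt$ with $T$ finite gives no control on $\liminf_{t\to T^-}g(t)$; for instance $g(t)=(T-t)^{-1/2}$ is integrable yet blows up along every sequence. So you cannot produce time slices on which the tension fields $\tau_1=-\frac{\partial A}{\partial t}$, $\tau_2=-\frac{\partial\phi}{\partial t}$ are even bounded in $L^2(\Sigma)$, and the hypothesis (\ref{e:bounded-tension}) of Theorem \ref{t:compact} is not available. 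Everything downstream collapses with it: the bubble decomposition along $t_n$, the identification of the lost energy with $\sum\E(\omega_{ij})$, and the non-concentration of curvature, which your scheme silently delegates to the strong $H^1$-convergence of connections inside Theorem \ref{t:compact} --- itself contingent on the missing $L^2$ bound on $D_A^*F_A$.

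This is precisely why the paper's proof of part (3) is structured differently. It first proves (Claim 1) that $e(A(t),\phi(t))\,dv_g\rightharpoonup e(A(T),\phi(T))\,dv_g+m\,\delta_{x_0}$ as Radon measures for the full limit $t\to T^-$, using the local energy inequality to show the mass $m$ is sequence-independent; it then performs a parabolic blow-up $A_i(x,t)=\lambda_i A(\lambda_i x, t_i+\lambda_i^2 t)$, $\phi_i(x,t)=\phi(\lambda_i x, t_i+\lambda_i^2 t)$. The whole point of the rescaling is that
$\int_{-2}^0\int\big(\lambda_i^{-2}|\frac{\partial A_i}{\partial t}|^2+|\frac{\partial\phi_i}{\partial t}|^2\big)
=\int_{t_i-2\lambda_i^2}^{t_i}\int\big(|\frac{\partial A}{\partial t}|^2+|\frac{\partial\phi}{\partial t}|^2\big)\to 0$
because the time interval shrinks; only then does Fubini yield good rescaled slices $\tau_i$ with vanishing tension, to which the bubbling theorem for approximate harmonic maps can be applied. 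Even then a separate argument, culminating in (\ref{noconcentration-F}), is needed to rule out concentration of the rescaled curvature $\lambda_i^{-2}|F_{B_i}|^2$. If you wish to keep the cleaner ``slice selection plus Theorem \ref{t:compact}'' route, you must either restrict to $T=+\infty$ or insert the rescaling step; as written, your argument proves Theorem \ref{t:infinity-sing} but not Theorem \ref{t:finite-sing}.
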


\begin{proof}  We may assume, after a suitable gauge fixing, that
$(A,\phi)\in C^\infty(\Sigma\times (0,T))$.  Since $(A,\phi)\in C([0,T], L^2(\Sigma))$ and $\phi\in L^\infty([0,T], H^1(\Sigma))$, the property 1
follows easily.
For the property 2,  define the energy concentration set  ${\bf x}\subset \Sigma$ by
\begin{equation}\label{sing_set}
{\bf x}:=\bigcap_{r>0}\Big\{x\in\Sigma: \ \limsup_{t\uparrow T^{-}}\int_{B_r(x)} \big(|F_A|^2+|D_A\phi|^2+|\mu(\phi)-c|^2\big)(t)\,dv_g\ge \epsilon_0
\Big\}.
\end{equation}
By (\ref{e:energy-inequ}) and a simple covering argument,
we can show that ${\bf x}=\{x_1,\cdots, x_L\}\subset\Sigma$ is a set of finitely many points, with $L\le \frac{\mathcal E(0)}{\epsilon_0}$.
For any
$x_0\in \Sigma\setminus {\bf x}$, it follows from (\ref{sing_set}) that there exist $r_0>0$ and $\delta_0>0$ such that
$$
\int_{B_{r_0}(x_0)} \big(|F_A|^2+|D_A\phi|^2+|\mu(\phi)-c|^2\big)(t)\,dv_g <\epsilon_0, \ \forall\  T-\delta_0^2\le t<T.
$$
Hence, for any compact set $K\subset \Sigma\setminus {\bf x}$, by a simple covering argument
there exists $r_1=r_1(K)>0$ such that
\begin{equation}\label{uniform-small}
\sup_{x\in K} \int_{B_{r_1}(x)} \big(|F_A|^2+|D_A\phi|^2+|\mu(\phi)-c|^2\big)(t)\,dv_g <\epsilon_0, \ \forall\ T-r_1^2\le t<T.
\end{equation}
Hence by Lemma \ref{l:ep-reg} and Lemma \ref{l:higher-reg} we have that
$$
\max_{x\in K}\Big\|\big|\nabla^l_A F_A\big|+\big|\nabla_A^{l+1}\phi\big|\Big\|_{L^\infty\big(B_{\frac{r_1}2}(x)\times [T-\frac{r_1^2}4, T)\big)}
\le C(l, \epsilon_0, K), \ \forall\ l\ge 0.
$$
Similar to the argument in the proof of Theorem 5.1, this, combined with the YMH  flow equation (\ref{e:heat}) and the fact $A(t)\in C^\infty(\Sigma)$ for $t=T-\frac{r_1^2}4$, implies that  $(A,\phi)\in C^\infty\big(K_{\frac{r_1}2}\times [T-\frac{r_1^2}4, T]\big)$ and
\begin{equation}\label{higher-reg1}
\big\|(A,\phi)\big\|_{C^l\big(K_{\frac{r_1}2}\times [T-\frac{r_1^2}4, T]\big)}\le C(l,\epsilon_0, K), \ \forall\ l\ge 0.
\end{equation}
Here $K_{r}:=\big\{x\in\Sigma: {\rm{dist}}(x,K)< r\big\}$ denotes $r$-neighborhood of $K$, for $r>0$.
It follows from (\ref{higher-reg1}) and the property 1 that $(A(t), \phi(t))\rightarrow (A(T),\phi(T))$ in $C^\infty(K_{\frac{r_1}4})$
as $t\rightarrow T^-$. This yields property 2, since $K\subset \Sigma\setminus {\bf x}$ is arbitrary.

For the property 3,  we may assume, for simplicity, that ${\bf x}=\{x_0\}$ only consists of a single point.
First we need to show\\
\noindent{\it Claim 1}. There exists a constant $m\ge \epsilon_0$ such that
\begin{equation}\label{radon1}
e(A(t),\phi(t))\,dv_g\rightharpoonup e(A(T),\phi(T))\,dv_g +m \delta_{x_0}
\end{equation}
for $t\rightarrow T^{-}$, as convergence of Radon measures on $\Sigma$. Here as in the section 1,
$$e(A(t),\phi(t))=(|F_A|^2+|\nabla_A\phi|^2+|\mu(\phi)-c|^2)(t),$$
and $\delta_{x_0}$ denotes the delta mass centered at $x_0$.

The proof is based on Lemma \ref{l:energy-inequ-local}
and can be carried out similarly to \cite{LinW} Lemma 4.1. For the convenience of readers, we sketch it here.
For ${\bf x}=\{x_0\}$ and  $(A(t),\phi(t))\rightarrow (A(T),\phi(T))$ in
$H^1_{\rm{loc}}(\Sigma\setminus\{x_0\})$ by the property 2, we have that $F_{A(t)}\rightharpoonup F_{A(T)}$
in $L^2(\Sigma)$. Hence, for given two sequences $t_i^{j} \rightarrow T^{-}$, there exist
$m_j\ge \epsilon_0$, $j=1,2$, such that
\[
e(A(t_i^{j}),\phi(t_i^j))\,dv_g\rightharpoonup e(A(T),\phi(T))\,dv_g +m_j \delta_{x_0}, \ {\rm{for}}\ i\rightarrow\infty,
\]
as convergence of Radon measures, for $j=1,2$. It suffices to show $m_1=m_2$. For any $\epsilon>0$, choose
$\delta>0$ such that $\displaystyle\int_{B_{2\delta}(x_0)}e(A(T),\phi(T))\,dv_g<\epsilon$. Thus we have
\begin{eqnarray*}
&&m_1\ge \int_{B_{2\delta}(x_0)}e(A(t_i^{1}),\phi(t_i^1))\,dv_g-\epsilon\\
&&\ge \int_{B_{\delta}(x_0)}e(A(t_i^{2}),\phi(t_i^2))\,dv_g-C\frac{|t_i^1-t_i^2|}{\delta^2}\mathcal E(0)
-\Big|\int_{t_i^1}^{t_i^2}\int_\Sigma\big(|\frac{\partial A}{\partial t}|^2+|\frac{\partial \phi}{\partial t}|^2\big)\Big|-\epsilon\\
&&\ge  \int_{B_{\delta}(x_0)}e(A(t_i^{2}),\phi(t_i^2))\,dv_g-2\epsilon\ge m_2-2\epsilon.
\end{eqnarray*}
This yields $m_1\ge m_2$. Similarly, we have $m_2\ge m_1.$ Hence $m_1=m_2$ and (\ref{radon1}) follows.

To simplify the presentation, assume further that $(\Sigma,g)=(\mathbb R^2, dx^2)$, $x_0=(0,0)\in\mathbb R^2$, and $T=0$.
From (\ref{radon1}), there exist $t_i\uparrow 0$ and $\lambda_i\downarrow 0$ such
that
\begin{equation}
\lim_{t_i\uparrow 0}\int_{B_{\lambda_i}}e(A,\phi)(x,t_i)\,dx=m.
\end{equation}
Define $A_i(x,t)=\lambda_i A(\lambda_i x, t_i+\lambda_i^2 t), \phi_i(x,t)=\phi(\lambda_i x, t_i+\lambda_i^2 t)$
for $(x,t)\in \mathbb R^2\times [-2, 0]$. Then $(A_i, \phi_i)$ solves the scaled version of
the YMH heat flow equation (\ref{e:heat}):
\begin{equation}\label{e:heat1}
\begin{cases}\displaystyle\frac{\partial A_i}{\partial t}=-D_{A_i}^*F_{A_i}-\lambda_i^2\phi_i^*D_{A_i}\phi_i,\\
\displaystyle\frac{\partial \phi_i}{\partial t}=-D_{A_i}^*D_{A_i}\phi_i-\lambda_i(\mu(\phi_i)-c)\cdot \nabla (\mu(\phi_i)),
\end{cases}
\end{equation}
and satisfies
\begin{equation}
\int_{-2}^0\int_{\mathbb R^2}
\Big(\lambda_i^{-2}\big|\frac{\partial A_i}{\partial t}\big|^2+\big|\frac{\partial \phi_i}{\partial t}\big|^2\Big)
=\int_{t_i-2\lambda_i^2}^{t_i}\int_{\mathbb R^2}
\Big(\big|\frac{\partial A}{\partial t}\big|^2+\big|\frac{\partial \phi}{\partial t}\big|^2\Big)\rightarrow 0,
\ {\rm{as}}\ i\rightarrow\infty.
\end{equation}
Therefore, by Fubini's theorem, there exists $\tau_i\in (-1,-\frac12)$ such that
\begin{equation}\label{good-slice}
\lim_{i\rightarrow\infty}\int_{\mathbb R^2}
\Big(\lambda_i^{-2}\big|\frac{\partial A_i}{\partial t}\big|^2+\big|\frac{\partial \phi_i}{\partial t}\big|^2\Big)(x,\tau_i)
=0.
\end{equation}
From the local energy inequality (\ref{e:energy-inequ-local}), we also have
$$
\int_{B_{R\lambda_i}}e(A,\phi)(x,t_i+\lambda_i^2\tau_i)
\ge \int_{B_{\lambda_i}}e(A,\phi)(x,t_i)-CR^{-2}\mathcal E(0)\ge m+o(1)-CR^{-2}\mathcal E(0).
$$
This and (\ref{radon1}) imply that
\begin{equation}\label{energy-loss1}
\lim_{R\rightarrow +\infty}\lim_{i\rightarrow\infty}\int_{B_{R\lambda_i}}e(A,\phi)(x,t_i+\lambda_i^2\tau_i)=m.
\end{equation}
Define $B_i(x)=A_i(x,\tau_i)$ and $\psi_i(x)=\phi_i(x,\tau_i)$ for $x\in\mathbb R^2$.
It follows from the (\ref{energy-loss1}),
(\ref{good-slice}), and (\ref{e:heat1})  that $(B_i, \psi_i)\in \A\times \S$ satisfies:
\begin{equation}\label{energy-loss2}
\int_{B_R}\big(\lambda_i^{-2}|F_{B_i}|^2+|D_{B_i}\psi_i|^2+\lambda_i^2|\mu(\psi_i)-c|^2\big)(x)
=\int_{B_{R\lambda_i}}e(A,\phi)(x,t_i+\lambda_i^2\tau_i)=m+o(1),
\end{equation}
\begin{equation}\label{l2-bound}
\int_{B_R}|B_i(x)|^2=\int_{B_{R\lambda_i}}|A|^2(x,t_i+\lambda_i^2\tau_i)\le C,
\end{equation}
\begin{equation}\label{small-F}
\int_{B_R}\big|F_{B_i}(x)\big|^2=\lambda_i^2\int_{B_{R\lambda_i}}|F_{A}|^2(x,t_i+\lambda_i^2\tau_i)\le m\lambda_i^2,
\end{equation}
\begin{eqnarray}\label{small-DF}
\int_{B_R}\big|D_{B_i}^*F_{B_i}(x)\big|^2&\le& C\int_{B_R}\big(|\frac{\partial A_i}{\partial t}|^2+\lambda_i^4|D_{A_i}\phi_i|^2\big)(x,t_i+\lambda_i^2\tau_i)\le C(o(1)+\lambda_i^2)\lambda_i^2,
\end{eqnarray}
and
\begin{equation}\label{approx-hm1}
\int_{B_R}\big|D_{B_i}^*D_{B_i}\psi_i(x)\big|^2
\le C\int_{B_R}\big(|\frac{\partial \phi_i}{\partial t}|^2+\lambda_i^2|D_{A_i}\phi_i|^2\big)(x,t_i+\lambda_i^2\tau_i)\le C\lambda_i^2+o(1).
\end{equation}
From (\ref{l2-bound}, (\ref{small-F}), and (\ref{small-DF}), we can apply Lemma \ref{l:reg} to conclude that on $B_R$
there exist a sequence of gauge transformations $\{s_i\}\subset \G_{2,2}$ such that
$\widetilde{B}_i:=s_i^*B_i$ satisfies
\begin{equation}\label{small-H1}
\big\|\widetilde{B}_i\big\|_{H^2(B_R)}\le C(R)\big(\|F_{B_i}\|_{L^2(B_R)}+\|D_{B_i}^*F_{B_i}\|_{L^2(B_R)}\big)\le C(R)\lambda_i^2.
\end{equation}
Thus we may assume that $\widetilde{B}_i$ converges to $0$  weakly in $H^2(B_R)$  and strongly in $H^1(B_R)$. While (\ref{approx-hm1})
implies that $\widetilde{\psi}_i=s_i^*\psi_i$ satisfies
\begin{equation}\label{approx-hm2}
\int_{B_R}\big|D_{\widetilde{B}_i}^*D_{\widetilde{B}_i}\widetilde{\psi}_i(x)\big|^2
=\int_{B_R}\big|D_{B_i}^*D_{B_i}\psi_i(x)\big|^2\le C\lambda_i^2+o(1).
\end{equation}
From (\ref{small-H1}) and (\ref{approx-hm2}),  we can verify, similar to that of Theorem 6.1, that
$\big\{\widetilde{\psi}_i\big\}$ is a sequence of approximate harmonic maps, whose tension fields
$\tau(\widetilde{\psi}_i)$ have $\displaystyle\big\|\tau(\widetilde{\psi}_i)\big\|_{L^2(B_R)}\le C\lambda_i$.
Now we need

\smallskip
\noindent{\it Claim 2}. After passing to a subsequence, $\widetilde{\psi}_i$ converges to a constant map in $L^2(B_R)$.

To see this, observe that
\begin{equation}\label{l2-vanish}
\int_{B_R}\big|\widetilde\psi_i(x)-s_i^*\phi_i(x, -\lambda_i^{-2}t_i)\big|^2=\int_{B_R}\big|\phi_i(x, \tau_i)-\phi_i(x, -\lambda_i^{-2}t_i)\big|^2
\le 4\int_{-2}^2\int_{B_R}\big|\frac{\partial\phi_i}{\partial t}\big|^2\rightarrow 0,
\end{equation}
\begin{equation}\label{small-h1norm1}
\int_{B_R}\big|D_{s_i^*A_i} s_i^*\phi_i\big|^2(x,-\lambda_i^{-2}t_i)
=\int_{B_R}\big|D_{A_i}\phi_i|^2(x,-\lambda_i^{-2}t_i)=\int_{B_{R\lambda_i}}\big|D_{A}\phi\big|^2(x,0)\rightarrow 0,
\end{equation}
and
\begin{eqnarray}&&\int_{B_R}\big|s_i^*A_i(x, -\lambda_i^{-2}t_i)\big|^2
\le 2\int_{B_R}\big|s_i^*A_i(x, -\lambda_i^{-2}t_i)-\widetilde{B}_i(x)\big|^2+2\int_{B_R}\big|\widetilde{B}_i(x)\big|^2\nonumber\\
&&\le 2\int_{B_R}\big|s_i^*A_i(x, -\lambda_i^{-2}t_i)-s_i^*A_i(x,\tau_i)\big|^2+2\int_{B_R}\big|\widetilde{B}_i(x)\big|^2\nonumber\\
&&=2\int_{B_R}\big|A_i(x, -\lambda_i^{-2}t_i)-A_i(x,\tau_i)\big|^2+2\int_{B_R}\big|\widetilde{B}_i(x)\big|^2\nonumber\\
&&\le 4\int_{-2}^2\int_{B_R}\big|\frac{\partial A_i}{\partial t}\big|^2+2\int_{B_R}\big|\widetilde{B}_i(x)\big|^2\rightarrow 0.
\label{small-h1norm2}
\end{eqnarray}
Claim 2 now follows from (\ref{l2-vanish}), (\ref{small-h1norm1}), and (\ref{small-h1norm2}).

For each $R>0$, we now can apply Theorem 6.4 to $\widetilde{\psi}_i$ on $B_R$ to conclude that there exist $N_R$ bubbles
$\big\{\omega_{l,R}\big\}_{l=1}^{N_R}$, with $N_R\le \frac{m}{\alpha(M)}$, such that
\begin{equation}\label{energy-id1}
\lim_{i\rightarrow\infty}\int_{B_R} \big|D_{\widetilde{B}_i}\widetilde{\psi}_i\big|^2
=\sum_{l=1}^{N_R}\mathcal E(\omega_{l,R}).
\end{equation}
We may assume that there exists an integer $d\in \big[1, \frac{m}{\alpha(M)}\big]$ such that for $N_R=d$ for $R>>1$.
Note that for $l=1,\cdots, d$, $\big\{\omega_{l,R}\big\}_{R>1}$ is a sequence of harmonic maps from $\mathbb S^2$ to $M$
whose energies are uniformly bounded. Hence we can apply the energy identity result for harmonic maps (cf. \cite{P1})
to conclude that for $l=1,\cdots, d$, there exist  $N_l$ bubbles $\big\{\omega_{l,j}\big\}_{j=1}^{N_l}$,
with $N_l\le \frac{m}{\alpha(M)}$, such that
\begin{equation}\label{energy-id2}
\lim_{R\rightarrow\infty}\mathcal E(\omega_{l,R})=\sum_{j=1}^{N_l}\mathcal E(\omega_{l,j}).
\end{equation}
Hence we have
\begin{equation}\label{energy-id3}
\lim_{R\rightarrow\infty}\lim_{i\rightarrow\infty}\int_{B_R} \big|D_{\widetilde{B}_i}\widetilde{\psi}_i\big|^2
= \sum_{l=1}^d\sum_{j=1}^{N_l}\mathcal E(\omega_{l,j}).
\end{equation}

It is readily seen that the proof of property 3 will be complete if we can show
\begin{equation}\label{noconcentration-F}
\lim_{R\rightarrow\infty}\lim_{i\rightarrow\infty}\int_{B_R} \lambda_i^{-2}\big|F_{B_i}\big|^2=0.
\end{equation}
Set $\widehat{t}_i=t_i+\lambda_i^2\tau_i$. Apply Uhlenbeck's gauge fixing for $A(\cdot, \widehat{t}_i)$ on $B_{\delta_0}$ for a small
$\delta_0>0$ (here $\delta_0>0$ is chosen so that for $1<p<2$, $\displaystyle\int_{B_{\delta_0}}|F_{A}|^p(x,\widehat{t}_i)$ is small,
which is possible since $\displaystyle\int_{\mathbb R^2}|F_A|^2(x,\widehat{t}_i)\le \E(0)$), we obtain a sequence of gauge transformations
$\alpha_i\in\G_{2,2}$ such that $\widehat{A}_i(x)=\alpha_i^*A(x,\widehat{t}_i)$ satisfies
\begin{equation}\label{gauge-fix1}
d^*\widehat{A}_i=0 \ {\rm{in}}\ B_{\delta_0}, \ \big\|\widehat{A}_i\big\|_{H^1(B_{\delta_0})}\le C\big\|F_{A}(\widehat{t}_i)\big\|_{L^2(B_{\delta_0})}.
\end{equation}
Set $\widehat{B}_i(x)=\lambda_i\widehat{A}_i(\lambda_i x,\widehat{t}_i)$ for $x\in B_{\delta_0\lambda_i^{-1}}$. Then we
have
\begin{equation}\label{curv-trans}
\int_{B_R} \lambda_i^{-2}\big|F_{B_i}\big|^2=\int_{B_R} \lambda_i^{-2}\big|F_{\widehat{B}_i}\big|^2
=\int_{B_R}\big|dC_i+\lambda_i [C_i,C_i]\big|^2,
\end{equation}
where $C_i(x)=\widehat{A}_i(\lambda_i x,\widehat{t}_i)$ for $x\in B_{\delta_0\lambda_i^{-1}}$.

From (\ref{e:heat1}) and (\ref{gauge-fix1}), we see that $\widehat{A}_i$ solves the elliptic equation:
\begin{equation}\label{e:heat2}
\Delta_0\widehat{A}_i(x)
=-\big(\widehat{A}_i\#d\widehat{A}_i+\widehat{A}_i\#\widehat{A}_i\#\widehat{A}_i
-\alpha_i^*\phi D_{\widehat {A}_i}\alpha_i^*\phi\big)(x,\widehat{t}_i)
-\frac{\partial (\alpha_i^*A)}{\partial t}(x,\widehat{t}_i), \ {\rm{in}}\ B_{\delta_0}.
\end{equation}
Here $\Delta_0$ denotes the standard Laplace operator on $\mathbb R^2$.
It follows from (\ref{e:heat2}) that $C_i$ solves
\begin{eqnarray}\label{e:heat3}
\Delta_0C_i(x)
&=&-\lambda_i^2\big(\widehat{A}_i\#d\widehat{A}_i+\widehat{A}_i\#\widehat{A}_i\#\widehat{A}_i
-\alpha_i^*\phi D_{\widehat {A}_i}\alpha_i^*\phi\big)(\lambda_i x,\widehat{t}_i)
-\lambda_i^2\frac{\partial (\alpha_i^*A)}{\partial t}(\lambda_i x,\widehat{t}_i)\nonumber\\
&=& I_1^i+I_2^i+I_3^i+I_4^i,
\end{eqnarray}
in  $B_{\delta_0\lambda_i^{-1}}$.
It is easy to see
\begin{equation}\label{H1-bound}
\int_{B_{\delta_0\lambda_i^{-1}}}|\nabla_0 C_i|^2=\int_{B_{\delta_0}}|\nabla_0 \widehat{A}_i|^2(\widehat{t}_i)
\le C.
\end{equation}
Using (\ref{gauge-fix1}) and (\ref{good-slice}), we can estimate
$$\big\|I_1^i\big\|_{L^{\frac43}(B_{\delta_0\lambda_i^{-1}})}
\le C\lambda_i^{\frac12}\big\|\widehat{A}_i(\widehat{t}_i)\big\|_{L^4(B_{\delta_0})}\big\|d\widehat{A}_i(\widehat{t}_i)\big\|_{L^2(B_{\delta_0})}
\le C\lambda_i^{\frac12}\big\|\widehat{A}_i(\widehat{t}_i)\big\|_{H^1(B_{\delta_0})}^2\le C\lambda_i^{\frac12},$$
$$
\big\|I_2^i\big\|_{L^{\frac43}(B_{\delta_0\lambda_i^{-1}})}
\le C\lambda_i^{\frac12}\big\|\widehat{A}_i(\widehat{t}_i)\big\|_{L^4(B_{\delta_0})}^3\le C\lambda_i^{\frac12},$$
$$\big\|I_3^i\big\|_{L^{2}(B_{\delta_0\lambda_i^{-1}})}
\le C\lambda_i\big\|D_{\widehat{A}_i}\alpha_i^*\phi(\widehat{t}_i)\big\|_{L^2(B_{\delta_0})}
=C\lambda_i\big\|D_A\phi(\widehat{t}_i)\big\|_{L^2(B_{\delta_0})}
\le C\lambda_i,$$
and
$$
\big\|I_4^i\big\|_{L^{2}(B_{\delta_0\lambda_i^{-1}})}^2
=\lambda_i^2\int_{B_{\delta_0}}\big|\frac{\partial A}{\partial t}(\widehat{t}_i)\big|^2
=\lambda_i^{-2}\int_{B_{\delta_0\lambda_i^{-1}}}\big|\frac{\partial A_i}{\partial t}(\widehat{t}_i)\big|^2=o(1)\rightarrow 0.
$$
Applying $W^{2,\frac43}$-estimate to the equation (\ref{e:heat3}),
we conclude that $C_i\in W^{2,\frac43}(B_{\frac{\delta_0}{2\lambda_i}})$,
and
\begin{eqnarray}
\big\|\nabla_0 C_i\big\|_{W^{1,\frac43}(K)}&\le&
C(K)\Big[\sum_{j=1}^4\|I_j^i\|_{L^{\frac43}(B_{\delta_0\lambda_i^{-1}})}+\|\nabla_0 C_i\|_{L^2(B_{\delta_0\lambda_i^{-1}})}\Big]
\nonumber\\
&\le& C(K)\big[1+o(1)+\lambda_i^\frac12\big]
\end{eqnarray}
for any compact subset $K\subset B_{\frac{\delta_0}{2\lambda_i}}$.
Hence we may assume, after passing to a subsequence, that
$C_i\rightarrow C$ in $H^1_{\rm{loc}}(\mathbb R^2)$. From (\ref{e:heat3}) and (\ref{H1-bound}), we see that
$$\Delta_0 C=0 \  \ {\rm{in}}\ \ \mathbb R^2, \ \int_{\mathbb R^2}|\nabla_0 C|^2<+\infty.$$
Thus $C$ must be a constant. This implies
that for any $R>0$,
\begin{equation}\label{H1-convergence}
\lim_{i\rightarrow \infty}\int_{B_R}|\nabla_0 C_i|^2=0.
\end{equation}

Observe that by (\ref{gauge-fix1})
$$\int_{B_{\delta_0\lambda_i^{-1}}}|\lambda_i^{\frac12}C_i|^4
=\int_{B_{\delta_0}}|\widehat{A}_i|^4\le C,
\ {\rm{and}}\ \int_{B_{\delta_0\lambda_i^{-1}}}\big|\nabla_0 (\lambda^{\frac12}C_i)\big|^2
=\lambda_i\int_{B_{\delta_0}}\big|\nabla_0\widehat{A}_i\big|^2\rightarrow 0.
$$
Thus we may assume that there exists a constant $\widetilde C$
such that $\lambda_i^\frac12 C_i\rightarrow \widetilde C$ in $L^4_{\rm{loc}}(\mathbb R^2)$.
Since
$$\int_{B_L}|\widetilde C|^4=\lim_{i\rightarrow \infty}\int_{B_L}|\lambda_i^{\frac12}C_i|^4\le C$$
holds for any $L>0$, we must have $\widetilde C=0$.
Hence for any $R>0$, we have
\begin{equation}\label{L4-convergence}
\lim_{i\rightarrow \infty}\int_{B_R}\big|\lambda_i[C_i, C_i]\big|^2=0.
\end{equation}
It is clear that (\ref{noconcentration-F}) follows from (\ref{curv-trans}), (\ref{H1-convergence}) and
(\ref{L4-convergence}). The  proof is now complete.
\end{proof}

For the asymptotic behavior of the global weak solution constructed by Theorem 5.1 at time infinity, we have

\begin{thm}\label{t:infinity-sing} For $(A_0,\phi_0)\in \A_{1,2}\times \S_{1,2}$, assume that  $(A, \phi)$ is the global
weak solution to the YMH  flow equation (\ref{e:heat}), under the initial condition (\ref{IVP}), constructed
by Theorem 5.1. Then the following properties hold:
\begin{enumerate}
\item There exist $t_i\uparrow +\infty$, a sequence of gauge transformations $\{s_i\}\subset\G_{2,2}$,
a set of finitely many points $\mathbf{x} := \{x_1, \cdots, x_k\}\subset \Sigma$, with $k\le \frac{\E(0)}{\epsilon_0}$,
 and  a Yang-Mills-Higgs field $(A_\infty,\phi_\infty)\in\A\times\S$ such that
 $s_i^*A(t_i)\rightarrow A_\infty$ in $H^1(\Sigma)$, $s_i^*\phi(t_i)\rightharpoonup \phi_\infty$ in $H^1(\Sigma)$,
 and $s_i^*\phi(t_i)\rightarrow \phi_\infty$
 in $H^1_{loc}(\Si\setminus\mathbf{x})$, as $i\rightarrow \infty$.
\item There exist finitely many nontrivial harmonic maps $\omega_{ij}: \mathbb S^2 \to M, 1\le i \le k, 1\le j\le l$ such that
\begin{equation}\label{energy-identity-infinity}
  \lim_{i\to \infty} \E(A(t_i), \phi(t_i)) = \E(A_\infty, \phi_\infty) + \sum_{1\le i\le k, 1\le j\le l}\E(\omega_{ij}).
\end{equation}
\item The images of $\{\omega_{ij}\}_{1\le i\le k, 1\le j\le l}$ and that of $\phi_\infty$ are connected.
\end{enumerate}
\end{thm}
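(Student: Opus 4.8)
The plan is to deduce Theorem~\ref{t:infinity-sing} from the compactness theorem for approximate YMH fields, Theorem~\ref{t:compact}, by extracting from the flow a sequence of times at which $(A(t),\phi(t))$ is an approximate YMH field with a small $L^2$ tension field.

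First I would record the relevant consequence of the energy inequality. Past the last finite singular time $T_L$ the global weak solution is, up to a fixed ($\G_{2,2}$) gauge transformation, smooth on $\Sigma\times(T_L,\infty)$, so we may assume $(A(t),\phi(t))\in\A\times\S$ there. Summing the identity of Lemma~\ref{l:energy-inequ} over the smooth subintervals gives
\[
\int_{T_L}^{\infty}\int_\Sigma\Big(\big|\tfrac{\partial A}{\partial t}\big|^2+\big|\tfrac{\partial \phi}{\partial t}\big|^2\Big)\,dv_g\,dt\ \le\ \E(T_L)\ \le\ \E(0)\ <\ +\infty .
\]
Hence there is a sequence $t_i\uparrow+\infty$ with $t_i>T_L$ such that $\big\|\tfrac{\partial A}{\partial t}(t_i)\big\|_{L^2(\Sigma)}+\big\|\tfrac{\partial \phi}{\partial t}(t_i)\big\|_{L^2(\Sigma)}\to 0$. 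Since the flow equation~(\ref{e:heat}) is exactly $\tfrac{\partial A}{\partial t}=-\tau_1(A,\phi)$ and $\tfrac{\partial \phi}{\partial t}=-\tau_2(A,\phi)$, this says that $(A_n,\phi_n):=(A(t_i),\phi(t_i))$ satisfies $\|\tau_1(A_n,\phi_n)\|_{L^2(\Sigma)}+\|\tau_2(A_n,\phi_n)\|_{L^2(\Sigma)}\to 0$, while $\E(A_n,\phi_n)\le\E(0)$ is uniformly bounded; in particular the hypothesis~(\ref{e:bounded-tension}) holds.

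Next I would simply invoke Theorem~\ref{t:compact} for this sequence. After passing to a subsequence it produces gauge transformations $\{s_i\}\subset\G_{2,2}$, a finite set $\mathbf{x}=\{x_1,\dots,x_k\}$ with $k\le\E(0)/\epsilon_0$ (from a covering argument together with the energy bound), an approximate YMH field $(A_\infty,\phi_\infty)\in\A_{2,2}\times\S_{2,2}$, and nontrivial harmonic maps $\omega_{ij}:\mathbb S^2\to M$, such that $s_i^*A(t_i)\to A_\infty$ in $H^1(\Sigma)$, $s_i^*\phi(t_i)\to\phi_\infty$ in $H^1_{\mathrm{loc}}(\Sigma\setminus\mathbf{x})$, the energy identity~(\ref{energy-identity-infinity}) holds, and the images of the $\omega_{ij}$ and of $\phi_\infty$ are connected. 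The weak convergence $s_i^*\phi(t_i)\rightharpoonup\phi_\infty$ in $H^1(\Sigma)$ is then automatic from the uniform $H^1$ bound and the local strong convergence. This delivers parts~2 and~3, and all of part~1 except the assertion that $(A_\infty,\phi_\infty)$ is an honest (rather than merely approximate) YMH field.

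To obtain that last point I would pass to the limit in the tension equations. Because $\tau_1,\tau_2$ are gauge equivariant and their $L^2$ norms gauge invariant, $\|\tau_1(s_i^*A(t_i),s_i^*\phi(t_i))\|_{L^2}+\|\tau_2(s_i^*A(t_i),s_i^*\phi(t_i))\|_{L^2}\to 0$; combining this with the strong $H^1(\Sigma)$ convergence of $s_i^*A(t_i)$ and the strong $H^1_{\mathrm{loc}}(\Sigma\setminus\mathbf{x})$ convergence of $s_i^*\phi(t_i)$, one passes to the limit (in a local trivialization) to get $\tau_1(A_\infty,\phi_\infty)=\tau_2(A_\infty,\phi_\infty)=0$ on $\Sigma\setminus\mathbf{x}$. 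Since $(A_\infty,\phi_\infty)\in\A_{2,2}\times\S_{2,2}$ its tension fields are $L^2(\Sigma)$ functions and $\mathbf{x}$ has measure zero, so they vanish on all of $\Sigma$; standard elliptic regularity in Coulomb gauge (as in Lemma~\ref{l:reg} and Proposition~\ref{l:global-gauge-fix}, followed by bootstrapping) upgrades $(A_\infty,\phi_\infty)$ to an element of $\A\times\S$. The real content of the theorem is carried by Theorem~\ref{t:compact}; the only genuinely new steps here are the Palais--Smale-type extraction of the times $t_i$ and the removal of the singular set from the limit, and the point that must be handled with care is the absence of energy loss on the necks, which is precisely what the energy identity in Theorem~\ref{t:compact} guarantees.
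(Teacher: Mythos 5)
Your proposal is correct and follows essentially the same route as the paper: extract a Palais--Smale sequence of times $t_i\uparrow\infty$ from the energy inequality so that $\partial_tA(t_i)=-\tau_1$ and $\partial_t\phi(t_i)=-\tau_2$ vanish in $L^2$, then feed the time-slices into the compactness result for approximate YMH fields (Theorem~\ref{t:compact}) together with the gauge-fixing of Proposition~\ref{l:global-gauge-fix}. Your final paragraph (passing to the limit in the tension equations away from $\mathbf{x}$, removing the finite singular set by an $L^2$ a.e.\ argument, and bootstrapping to smoothness) just spells out explicitly what the paper dispatches by citing the bootstrap arguments of \cite{So}.
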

\begin{proof} For simplicity, we may assume that $(A,\phi)\in C^\infty((0,+\infty), \A\times\S)$. From (\ref{e:energy-inequ}),
there exist $t_i\uparrow +\infty$ and $0<L_0<+\infty$ such that
\begin{equation}\label{energy-conv}
\lim_{i\rightarrow\infty}\E(A(t_i),\phi(t_i))=L_0,
\end{equation}
and
\begin{equation}\label{good-slice1}
\lim_{i\rightarrow\infty}\int_\Sigma \big(|\frac{\partial A}{\partial t}|^2+|\frac{\partial\phi}{\partial t}|^2\big)(x,t_i)=0.
\end{equation}
Set $(A_i(x),\phi_i(x)):=(A(x,t_i),\phi(x,t_i))$ for $x\in\Sigma$. Then $(A_i,\phi_i)\in\A\times\S$ is a sequence
of approximate YMH fields, with uniformly bounded YMH energies, such that  its
tension fields $\tau_1(A_i,\phi_i)=\frac{\partial A}{\partial t}(t_i)$ and $\tau_2(A_i,\phi_i)
=\frac{\partial\phi}{\partial t}(t_i)$. Hence it follows from (\ref{good-slice1}) that
\begin{equation}\label{vanish-tensions}
\lim_{i\rightarrow\infty}\Big[\big\|\tau_1(A_i,\phi_i)\big\|_{L^2(\Sigma)}+\big\|\tau_2(A_i,\phi_i)\big\|_{L^2(\Sigma)}\Big]=0.
\end{equation}
Applying Lemma 6.2 to $(A_i,\phi_i)$, we conclude that there exist gauge transformations $\{s_i\}\subset \G_{2,2}$,
a YMH field $(A_\infty,\phi_\infty)\in \A_{2,2}\times \S_{2,2}$ such that
$$s_i^*A_i\rightarrow A_\infty \ {\rm{in}}\ H^1(\Sigma), \ s_i^*\phi_i\rightharpoonup \phi_\infty
\ {\rm{in}}\ H^1(\Sigma).$$
The remaining parts of Theorem \ref{t:infinity-sing}, except the smoothness of $(A_\infty,\phi_\infty)$,  follow directly  from Theorem \ref{t:compact}.
While smoothness of YMH fields in $\A_{2,2}\times \S_{2,2}$, after suitable gauge transformations,
can be done by the bootstrap arguments
(see, e.g., \cite{So} Theorem 3.3).
\end{proof}

\section{Existence of global weak solutions of the YMH flow}

In this section, we indicate how to extend the local weak solution $(A,\phi)$ to the YMH flow equation (\ref{e:heat}) and
(\ref{IVP}) beyond the first singular time to a global weak solution eventually.

Note that, different form higher dimension cases, an isolated singular point of a $H^1$ connection $A$ on a Riemann
surface, in general, may not be removed even if the curvature $F$ vanishes. This can be illustrated by the following example.

Let $\D^* = \{x\in \Real^2|\ 0<|x|\le 1\}$ be the punctured disk, and $P$ is a principal $U(1)$-bundle over $\D^*$. Let $a\in \Real$ be a constant. Then $A=ad\theta$ is a well defined smooth connection on $P$. Obviously, the curvature $F$ is identically zero on the disk. However, $A$ can be extended to the whole disk by a gauge transformation if and only if the holonomy $\text{Hol(A)} = e^{2\pi a}$ equals to identity.

The next theorem shows that the limit connection $A(T)$ obtained in Theorem~\ref{t:local}
belongs to $H^1$ after a gauge transformation, which is needed for an extension of weak solutions
to (\ref{e:heat}) beyond $T$.

\begin{thm}\label{t:limit-pair} Assume $0<T_1<+\infty$ is the first singular time for the local weak solution
$(A, \phi)$ of the YMH flow equation~(\ref{e:heat}) and (\ref{IVP}), constructed by Theorem 5.1.
Then there exists a time-independent gauge transformation
$s\in \A_{2,2}$ such that $\big(\ti{A}, \ti{\phi}\big) := \big(s^*A(T_1), s^*\phi(T_1)\big) \in \A_{1,2}\times \S_{1,2}$.
\end{thm}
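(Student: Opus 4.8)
The plan is to reduce the statement to a local removable‑singularity assertion near each singular point and to verify that the only obstruction to removability — the holonomy of $A(T_1)$ around small loops encircling the point — vanishes, using that $A(T_1)$ is the flow‑limit of genuine, globally defined smooth connections carrying a uniform $L^2$ curvature bound. First I would invoke Theorem~\ref{t:finite-sing}: after composing with a fixed time‑independent gauge transformation we may assume $(A,\phi)\in C^\infty(\Sigma\times(0,T_1))$, that $(A(t),\phi(t))\to(A(T_1),\phi(T_1))$ in $C^\infty_{\rm loc}(\Sigma\setminus\mathbf{x})$ as $t\uparrow T_1$, that $A(T_1)\in\A_{0,2}$ and $\phi(T_1)\in\S_{1,2}$, and (by Theorem~\ref{t:local} and the energy inequality \eqref{e:energy-inequ}) that $F_{A(T_1)}\in L^2(\Sigma)$. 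Fix $x_i\in\mathbf{x}$ and pick a small geodesic disk $D_i=B_\rho(x_i)$, disjoint from the other points of $\mathbf{x}$ and small enough that $\|F_{A(T_1)}\|_{L^2(D_i)}\le\epsilon_0$. It suffices to produce, for each $i$, a gauge transformation $\sigma_i$ on $D_i\setminus\{x_i\}$, smooth away from $x_i$, under which $\sigma_i^*A(T_1)$ extends to an $H^1$ connection on $D_i$; patching the $\sigma_i$ with the identity over $\Sigma\setminus\bigcup_iD_i$ (where $A(T_1)$ is already smooth), interpolating smoothly on the intervening annuli, and composing with the gauge transformation of Theorem~\ref{t:finite-sing} then yields the desired $s$, while $s^*\phi(T_1)\in\S_{1,2}$ follows because $s$ is continuous and $\phi(T_1)\in\S_{1,2}$.

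The heart of the proof is the vanishing of the limiting holonomy. Since $\|F_{A(t)}\|_{L^2(\Sigma)}\le\sqrt{\E(0)}$ for every $t<T_1$ by \eqref{e:energy-inequ}, Cauchy--Schwarz gives $\int_{B_\epsilon(x_i)}|F_{A(t)}|\,dv_g\le C\epsilon\sqrt{\E(0)}$, uniformly in $t$. For $t<T_1$ the connection $A(t)$ is smooth on all of $\Sigma$, so on the contractible disk $B_\epsilon(x_i)$ the standard holonomy--curvature estimate (non-abelian Stokes) gives ${\rm dist}\big({\rm Hol}_{\gamma_{i,\epsilon}}(A(t)),{\rm id}\big)\le e^{\int_{B_\epsilon(x_i)}|F_{A(t)}|}-1\le C\epsilon\sqrt{\E(0)}$ for the circle $\gamma_{i,\epsilon}=\partial B_\epsilon(x_i)$, again uniformly in $t<T_1$. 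Letting $t\uparrow T_1$ and using the $C^\infty$ convergence on the fixed circle $\gamma_{i,\epsilon}\subset\Sigma\setminus\mathbf{x}$ together with the continuity of holonomy in the $C^0$‑topology of connections along the loop, I obtain ${\rm dist}\big({\rm Hol}_{\gamma_{i,\epsilon}}(A(T_1)),{\rm id}\big)\le C\epsilon\sqrt{\E(0)}\to0$ as $\epsilon\downarrow0$.

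With the holonomy obstruction gone, the local extension is a routine Uhlenbeck-type argument parallel to Lemma~\ref{l:reg}. Partition $D_i\setminus\{x_i\}$ into dyadic annuli $\Omega_k$ on each of which $\|F_{A(T_1)}\|_{L^2(\Omega_k)}$ is small, and apply Uhlenbeck's Coulomb gauge (\cite{Uh}, as in Lemma~\ref{l:reg}) to obtain gauges making $A(T_1)$ controlled in $W^{1,2}(\Omega_k)$ by the curvature; on the overlaps the transition functions are $W^{1,2}$‑close to constant group elements whose accumulated product is governed by the holonomies ${\rm Hol}_{\gamma_{i,\epsilon}}(A(T_1))$, so the decay just proved lets these local gauges be glued into a single $\sigma_i$ on $D_i\setminus\{x_i\}$ with $\sigma_i^*A(T_1)\in H^1(D_i)$ and $\|\sigma_i^*A(T_1)\|_{W^{1,2}(B_{\rho/2}(x_i))}\le C\|F_{A(T_1)}\|_{L^2(D_i)}$. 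Bootstrapping the Coulomb-gauge equation $\Delta(\sigma_i^*A(T_1))=F\#(\cdots)+D^*F$ as in Lemma~\ref{l:reg} upgrades this to $W^{2,2}$ near $x_i$, provided $D_{A(T_1)}^*F_{A(T_1)}\in L^2(\Sigma)$; the latter follows by passing to a distributional limit in the first equation of \eqref{e:heat} along a sequence $t_j\uparrow T_1$ with $\|\partial_tA(t_j)\|_{L^2}\to0$ (which exists by \eqref{e:energy-inequ}), since then $D_{A(t_j)}^*F_{A(t_j)}=-\partial_tA(t_j)-\phi^*D_A\phi$ is bounded in $L^2(\Sigma)$ (the second term because $\|D_A\phi(t_j)\|_{L^2}\le\sqrt{\E(0)}$). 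A final application of Proposition~\ref{l:global-gauge-fix} then produces $s\in\G_{2,2}$ with $(\ti{A},\ti{\phi})\in\A_{2,2}\times\S_{2,2}$.

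The main obstacle is the second paragraph: proving the limiting holonomy is trivial. The counterexample $A=a\,d\theta$ preceding the theorem shows that $L^2$ control of $F_{A(T_1)}$ by itself cannot suffice — a flat $H^1$ connection on the punctured disk need not extend — so the argument must genuinely use that $A(T_1)$ is the flow limit of non-singular connections with a uniform $L^2$ curvature bound. A secondary point requiring care is the gluing of the annular Coulomb gauges, where one has to track the transition functions precisely enough to identify the holonomy as the sole obstruction to global consistency.
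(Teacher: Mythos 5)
Your route is genuinely different from the paper's, and considerably heavier. The paper's proof is a one-paragraph application of Uhlenbeck's \emph{weak compactness} theorem (Theorem~1.5 of \cite{Uh}) on all of $\Sigma$: take $t_k\uparrow T_1$, observe that the $A_k:=A(t_k)$ are globally smooth connections with $\|F_{A_k}\|_{L^2(\Sigma)}$ uniformly bounded by \eqref{e:energy-inequ}, extract gauges $s_k\in\G_{2,2}$ with $s_k^*A_k\rightharpoonup \widetilde A$ in $H^1(\Sigma)$, note $\|ds_k\|_{L^2}\le\|A_k\|_{L^2}+\|s_k^*A_k\|_{L^2}$ is bounded so $s_k\rightharpoonup s$ in $H^1$, and identify $\widetilde A=s^*A(T_1)$ using the $L^2$-convergence $A_k\to A(T_1)$. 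No removable-singularity analysis, no holonomy, and no use of the $C^\infty_{loc}$ convergence away from $\mathbf{x}$ is needed. Your plan instead localizes at each bubbling point and runs an Uhlenbeck-type removable-singularity argument; the holonomy-decay observation in your second paragraph (non-abelian Stokes plus the scale-invariant bound $\int_{B_\epsilon}|F_{A(t)}|\le C\epsilon\|F\|_{L^2}$, passed to the limit along fixed circles in $\Sigma\setminus\mathbf{x}$) is correct and is a nice way to see why the $a\,d\theta$ obstruction cannot occur for flow limits. But what this buys you is only an explanation of \emph{why} the singularity is removable, at the cost of having to actually remove it by hand.

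That cost is where the gap lies: your third paragraph, the dyadic-annulus Coulomb gauge construction and the gluing of the annular gauges into a single $\sigma_i$ on $D_i\setminus\{x_i\}$, is the entire technical content of the argument and is only sketched. In particular you must (a) show the transition functions between consecutive annular gauges are $W^{2,2}$-close to constants with summable deviations, (b) show that the accumulated constants can be absorbed so that the glued gauge is a genuine $H^2_{loc}$ gauge transformation on the punctured disk under which $A(T_1)$ has $W^{1,2}$ norm summable over the annuli (this is exactly where the triviality of the limit holonomy must be used quantitatively, not just qualitatively), and (c) verify that $\sigma_i$ itself has enough regularity near $x_i$ and near $\partial D_i$ to patch with the identity and land in the gauge group. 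None of this is routine in the sense of following from Lemma~\ref{l:reg} as stated, which is an interior estimate on a single ball under a smallness hypothesis. A secondary over-claim: your final upgrade to $\A_{2,2}$ via Proposition~\ref{l:global-gauge-fix} needs $D_{A(T_1)}^*F_{A(T_1)}\in L^2(\Sigma)$, and identifying the weak $L^2$ limit of $-\partial_tA(t_j)-\phi^*D_A\phi(t_j)$ with $D_{A(T_1)}^*F_{A(T_1)}$ \emph{across the bubbling points} is not justified, since $\phi^*D_A\phi(t_j)$ only converges weakly there; this part should be dropped (the theorem only claims $\A_{1,2}$). I would recommend replacing the whole construction with the direct global compactness argument above.
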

\begin{proof}
Let $\{t_k\}_{k=1}^\infty \subset [0, T_1)$ be a sequence approaching $T_1$. By Theorem \ref{t:local}, we have
\begin{equation}
  (A_k, \phi_k) := (A(t_k), \phi(t_k))\to (A(T_1), \phi(T_1)) \text{~in~} L^2(\Sigma).
\end{equation}
Since $(A_k, \phi_k)$ is smooth and the curvatures $F_{A_k}$ of connection $A_k$ satisfies $\norm{F_{A_k}}_{L^2(\Sigma)}$
is uniformly bounded. Then by Uhlenbeck's compactness theorem(Theorem~1.5 in~\cite{Uh}), there exists a subsequence,
still denoted by $A_k$, and a sequence of gauge transformations $\{s_k\}\subset \G_{2,2}$ such that $\widetilde{A}_k := s_k^*A_k$
converges weakly in $H^1(\Sigma)$ to a limit connection $\widetilde{A}\in \A_{1,2}$. In particular, $\widetilde{A}_k$ is bounded in $H^1(\Sigma)$.
Since
\[ \widetilde{A}_k = s_k^{-1}ds_k + s_k^{-1}A_ks_k, \]
it follows that
\[ \norm{ds_k}_{L^2(\Sigma)} \le \norm{A_k}_{L^2(\Sigma)} + \norm{\widetilde{A}_k}_{L^2(\Sigma)}\]
is bounded. Hence $s_k$ weakly converges in $H^1(\Sigma)$ to a limit gauge transformation $s\in \G_{1,2}$. Then one easily check
that $\widetilde{A}_k$ converges to $s^*A(T_1)$ weakly in $L^1(\Sigma)$. By the uniqueness of limit, we find that
$s^*A(T_1) = \widetilde{A}\in \G_{1,2}$. It is easy to check that  $s^*\phi(T_1)\in \S_{1,2}$,
since $s, \phi(T_1)\in H^1(\Sigma)\cap L^\infty(\Sigma)$. This completes the proof.
\end{proof}

\begin{rem}{\rm
It was shown by Struwe~\cite{St} Lemma 3.6 and Schlatter~\cite{Sc2} Lemma 2.4
that for Yang-Mills flow, the connection can be extended to $C^0(H^1)$ as long as the curvature dose not concentrate. This is obtained by considering the evolution equation for the curvature $F$, which turns out to be a well-behaved parabolic equation. One may suspect that for the YMH flow in dimension two, the connection $A$ may belong to $C^0(H^1)$, since the curvature does not concentrate in the subcritical dimension two. However, this may not be true since the section $\phi$ may concentrate and blow up at the singular time $T_1$ so that the equation for the curvature is not well-defined at $T_1$. Thus we have to invoke Uhlenbeck's theorem to ensure the connection $A(T_1)\in \A_{1,2}$ only after a suitable gauge transformation.}
\end{rem}

Applying Theorem~\ref{t:limit-pair}, we can prove the following  theorem on the existence of global weak solutions
to (\ref{e:heat}).

\begin{thm}\label{t:global}
Let $(A_0, \phi_0)\in \A_{1,2}\times \S_{1,2}$. There exist a global weak solution $(A, \phi)$ to the YMH flow~(\ref{e:heat})
and (\ref{IVP}) such that
\begin{itemize}
\item[i)] the energy inequality $\E(A(t),\phi(t))\le\E(A_0,\phi_0)$ holds for all $0\le t<+\infty$, and
\[ A\in C^0([0, \infty), \A_{0,2}); \phi\in C^0([0, \infty), \S_{0,2}); F_A\in L^\infty([0, \infty), L^2);  D_A\phi\in L^\infty([0,\infty), L^2).\]
\item[ii)] There exist a positive integer $L\le [\frac{\E(0)}{\alpha(M)}]$, and gauge transformations
$\{s_i\}_{i=1}^L\subset\G_{2,2}$, and $0=T_0<T_1<T_2<\cdots<T_L<+\infty$ such that for $1\le i\le L$,
$(s_i^*A,s_i^*\phi)\in C^\infty\big(\Sigma\times (T_{i-1}, T_i]\setminus \{(x_1^i,T_i),\cdots, (x_{j(i)}^i,T_i)\}\big)$ for some $j(i)\le
[\frac{\E(0)}{\alpha(M)}]$.
\end{itemize}
\end{thm}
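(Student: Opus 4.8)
The plan is to build the solution by iterated continuation across singular times, using Theorem~\ref{t:local} for local existence, Theorem~\ref{t:finite-sing} for the structure of the flow at a finite singular time, and Theorem~\ref{t:limit-pair} to recover admissible initial data after gauge fixing. First I would set $T_0=0$ and apply Theorem~\ref{t:local} with data $(A_0,\phi_0)$ to get a local weak solution on a maximal interval $[0,T_1)$ with $F_A\in L^\infty(L^2)$, $\phi\in L^\infty(H^1)$, and, after a gauge transformation, smooth on $\Sigma\times(0,T_1)$. If $T_1=+\infty$ we are done with $L=0$. Otherwise $T_1<+\infty$, and the energy concentration characterization \eqref{local-ymh-concen} together with the blow-up analysis of Theorem~\ref{t:finite-sing} shows that the energy concentrates at $T_1$, that at least one nontrivial harmonic sphere bubbles off (so $\sum_j\E(\omega_{1j})\ge\alpha(M)$ in the energy identity there), and that $(A(T_1),\phi(T_1))\in\A_{0,2}\times\S_{1,2}$ is the $L^2$-limit of $(A(t),\phi(t))$ as $t\uparrow T_1$.

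Next I would invoke Theorem~\ref{t:limit-pair} to get a time-independent $\sigma_1\in\G_{2,2}$ with $(\sigma_1^*A(T_1),\sigma_1^*\phi(T_1))\in\A_{1,2}\times\S_{1,2}$, apply Theorem~\ref{t:local} with this pair as initial data, and then conjugate the resulting local weak solution back by $\sigma_1^{-1}$ and translate in time, obtaining a weak solution on $[T_1,T_2)$ that agrees with $(A(T_1),\phi(T_1))$ at $t=T_1$. Since $\partial_tA,\partial_t\phi\in L^2(L^2)$ on each slab (from the energy inequality of Theorem~\ref{t:local}) and $(A,\phi)$ is $L^2$-continuous across $T_1$, the concatenation is a weak solution on $[0,T_2)$ in the sense of the definition of weak solution in Section~1. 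Composing $\sigma_1$ with the gauges furnished by Theorem~\ref{t:local} on $(T_1,T_2)$ and by Theorem~\ref{t:finite-sing} near $T_2$ --- and improving it to $\G_{2,2}$ on the smooth region if needed --- gives $s_2\in\G_{2,2}$ with $(s_2^*A,s_2^*\phi)$ smooth on $\Sigma\times(T_1,T_2]$ away from the finitely many blow-up points at time $T_2$. Iterating this step yields times $0=T_0<T_1<T_2<\cdots$ and gauges $s_i\in\G_{2,2}$ with the asserted regularity on each slab.

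To show the iteration terminates and reaches $+\infty$, I would track the energy. At each singular time $T_i$, the energy inequality on $[T_{i-1},T_i)$, the energy identity of Theorem~\ref{t:finite-sing}, and $\sum_j\E(\omega_{ij})\ge\alpha(M)$ give
\[
\E\big(A(T_i),\phi(T_i)\big)\le\lim_{t\uparrow T_i}\E\big(A(t),\phi(t)\big)-\alpha(M)\le\E\big(A(T_{i-1}),\phi(T_{i-1})\big)-\alpha(M),
\]
hence inductively $\E(A(T_i),\phi(T_i))\le\E(0)-i\,\alpha(M)$. Since the YMH energy is nonnegative and a finite singular time past $T_{i-1}$ forces $\E(A(T_{i-1}),\phi(T_{i-1}))\ge\alpha(M)$ via the same identity, there are at most $L:=[\E(0)/\alpha(M)]$ singular times; and after $T_L$ the flow restarted from $(A(T_L),\phi(T_L))$, which has energy $<\alpha(M)$, must exist up to $+\infty$, since otherwise Theorem~\ref{t:finite-sing} would again produce a bubble of energy $\ge\alpha(M)$, contradicting its energy identity. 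This produces the global weak solution, and the remaining bounds $\E(A(t),\phi(t))\le\E(0)$, $F_A,D_A\phi\in L^\infty([0,\infty),L^2)$, $A\in C^0([0,\infty),\A_{0,2})$, $\phi\in C^0([0,\infty),\S_{0,2})$ follow from the slabwise energy inequality and $L^2$-continuity at the junction times.

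I expect the main obstacle to be organizational rather than analytic: each restart of Theorem~\ref{t:local} places the flow in a fresh Uhlenbeck/Theorem~\ref{t:limit-pair} gauge, and one must conjugate back consistently so that the pieces genuinely assemble into one global weak solution whose restriction to each slab is smooth after a single time-independent gauge transformation in $\G_{2,2}$. The other point that needs care is the quantization step --- verifying that no finite singular time can carry less than $\alpha(M)$ of concentrated energy, which is exactly the combination of \eqref{local-ymh-concen} with the bubbling of Theorem~\ref{t:finite-sing} --- since this is what forces the continuation to stop after finitely many steps.
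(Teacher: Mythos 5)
Your proposal is correct and follows essentially the same route as the paper's proof: local existence via Theorem~\ref{t:local} on a maximal interval, gauge-fixing the limit data at the singular time via Theorem~\ref{t:limit-pair}, restarting and conjugating back by the inverse gauge to patch the pieces, and terminating the iteration by the energy drop of at least $\alpha(M)$ at each singular time furnished by the bubbling analysis. The extra care you flag about gauge bookkeeping and the quantization of energy loss is exactly where the paper leans on Theorems~\ref{t:local} and~\ref{t:finite-sing}, so no new ideas are needed.
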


\begin{proof}
i) By Theorem~\ref{t:local}, there exists a local weak solution $(A(t), \phi(t))$ to the YMH  flow equation~(\ref{e:heat}) on the time interval $[0, T_1)$ with initial data $(A_0, \phi_0)$. Assume $0<T_1<+\infty$ is the maximal time interval.
Then $T_1$ can be characterized by Theorem~\ref{t:local}. Let $(A_1, \phi_1)$ be the limit of
$(A(t),\phi(t))$ in $L^2(\Sigma)$ as $t\to T_1$. Then by Theorem~\ref{t:limit-pair}, we may find a gauge transformation
$s_1\in \G_{1,2}$ such that $$(\widetilde{A}_1, \widetilde{\phi}_1) := s_1^*(A_1, \phi_1) \in \A_{1,2}\times \S_{1,2}.$$
Now at  time $T_1$, we set $(\widetilde{A}_1, \widetilde{\phi}_1)$ as the initial data and apply Theorem \ref{t:local} again
to obtain a local weak solution $(\widetilde{A}(t), \widetilde{\phi}(t))$ of the YMH flow~(\ref{e:heat}) on some time interval $[T_1, T_2)$.
Since the YMH heat flow equation is invariant under time-independent gauge transformations,
$(A_1(t), \phi_1(t)) := \big(s_1^{-1}\big)^*(\widetilde{A}(t), \widetilde{\phi}(t))$, $T_1\le t<T_2$, is still a solution to the YMH flow~(\ref{e:heat}).
Since
$$(A_1, \phi_1) = (A(T_1), \phi(T_1)) = (A_1(T_1), \phi_1(T_1)),$$
we can patch $(A(t), \phi(t))$, $0\le t\le T_1$, and $(A_1(t), \phi_1(t))$, $T_1\le t<T_2$, at $T_1$ to form a new solution,
still denoted by $(A(t), \phi(t))$, to the YMH flow (\ref{e:heat}) on the time interval $[0, T_2)$ such that
\[ A(t) \in C^0([0, T_2), L^2); F(t) \in L^\infty([0, T_2), L^2); \phi(t)\in C^0([0, T_2), L^2), D_A\phi\in L^\infty([0,T_2], L^2). \]
$T_2$ can again be characterized by Theorem~\ref{t:local}. Now we can repeat the above process inductively.
Since at each singular time $T_i$,  there is an energy loss of amount at least $\alpha(M)$ by Theorem \ref{t:local},
the process stops after at most $L$ steps, for some $L\le [\frac{\E(0)}{\alpha(M)}]$.
Therefore, we obtain a global weak solution of the YMH  flow (\ref{e:heat}), which satisfies the
properties stated in Theorem \ref{t:global}.
\end{proof}

\bigskip
\noindent{\bf Acknowledgements}. Part of this work was initiated when the first author was visiting University of Kentucky in 2013, which was supported by the AMS Fan Fund China Exchange program. The first author is partially supported by NSFC No.11201387 and Natural Science Foundation of Fujian Province of China No. 2014J01023. The second author is partially supported by NSF 1522869.


\end{document}